\numberwithin{equation}{section}
\newtheorem{theorem}{Theorem}[section]
\newtheorem{corollary}[theorem]{Corollary}
\newtheorem{lemma}[theorem]{Lemma}
\theoremstyle{definition}
\newtheorem{remark}[theorem]{Remark}
\theoremstyle{definition}
\theoremstyle{definition}
\def\dashint{\operatorname%
{\,\,\text{\bf-}\kern-.98em\DOTSI\intop\ilimits@\!\!}}
\def\\det{\text{det}}
\def\.5{\frac{1}{2}}
\newcommand{\RN}[1]{%
  \textup{\uppercase\expandafter{\romannumeral#1}}%
}
\renewcommand{\epsilon}{\varepsilon}
\newcounter{marnote}
\begin{document}
\title[Singular analysis of the stress concentration]{Singular analysis of the stress concentration in the narrow regions between the inclusions and the matrix boundary}

\author[C.X. Miao]{Changxing Miao}
\address[C.X. Miao]{1. Beijing Computational Science Research Center, Beijing 100193, China.}
\address{2. Institute of Applied Physics and Computational Mathematics, P.O. Box 8009, Beijing, 100088, China.}
\email{miao\_changxing@iapcm.ac.cn}

\author[Z.W. Zhao]{Zhiwen Zhao}

\address[Z.W. Zhao]{Beijing Computational Science Research Center, Beijing 100193, China.}
\email{zwzhao365@163.com}

%\thanks{}
%\footnote{ }

\date{\today} % delete this line to display the current date

%%% BEGIN DOCUMENT

%\tableofcontents

\begin{abstract}
We consider the Lam\'{e} system arising from high-contrast composite materials whose inclusions (fibers) are nearly touching the matrix boundary. The stress, which is the gradient of the solution, always concentrates highly in the narrow regions between the inclusions and the external boundary. This paper aims to provide a complete characterization in terms of the singularities of the stress concentration by accurately capturing all the blow-up factor matrices and making clear the dependence on the Lam\'{e} constants and the curvature parameters of geometry. Moreover, the precise asymptotic expansions of the stress concentration are also presented in the presence of a strictly convex inclusion close to touching the external boundary for the convenience of application.
\end{abstract}

\maketitle

%\noindent{\bf{Keywords}}: Gradient estimates, asymptotic expansions; blow-up factor matrices; composites; stress concentration.

%\noindent{\bf{MSC codes}}: {35Q74; 35B44; 74B05.}

\section{Introduction}

In composite structure comprising of a matrix and inclusions (fibers), it is common for inclusions in close proximity to each other or close to touching the matrix boundary. The mathematical problem can be described by the Lam\'{e} system. The principal quantity of interest from the perspective of engineering is the stress, which is the gradient of a solution to the Lam\'{e} system. It is well known that the stress may blow up as the distance $\varepsilon$ between inclusions or between the inclusion and the matrix boundary approaches to zero. Especially the latter exhibits more complex singular behavior due to the interaction from the boundary data.

This work is devoted to the investigation of the blow-up phenomena occurring in high-contrast fiber-reinforced composites, which is stimulated by the great work of Babu\u{s}ka et al. \cite{BASL1999}, where the Lam\'{e} system was utilized and they observed numerically that the size of the strain tensor stays bounded independent of the distance $\varepsilon$ between adjacent inclusions. Subsequently, Bonnetier and Vogelius \cite{BV2000} considered the scalar equation in the conductivity problem and proved that the gradient of a solution to the conductivity equation with piecewise constant coefficients remains bounded for two touching disks. Li and Vogelius \cite{LV2000} then studied the general divergence form second order elliptic equation with piecewise smooth coefficients and obtained the boundness of the gradient with its upper bound independent of $\varepsilon$ for the inclusions of arbitrary smooth shape. In subsequent work \cite{LN2003}, Li and Nirenberg further extended the $\varepsilon$-independent gradient estimates in \cite{LV2000} to general divergence form second order elliptic systems, especially covering systems of elasticity. Their results also demonstrate the numerical observation in \cite{BASL1999}. Dong and Li \cite{DL2019} recently made clear the dependence on the elliptic coefficients and the distance between two disks in the optimal gradient estimates and established more interesting higher-order derivative estimates for the isotropic conductivity problem. This, in particular, answered open problems $(b)$ and $(c)$ proposed by Li and Vogelius \cite{LV2000}. However, for the inclusions of general smooth shape and second order elliptic systems of divergence form, it remains to be solved. We refer to page 894 of \cite{LN2003} for these open problems. In addition, Kim and Lim \cite{KL2019} made use of the single and double layer potentials with image line charges to establish an asymptotic expression for the solution to the conductivity problem in the presence of core-shell structure with circular boundaries. Calo, Efendiev and Galvis \cite{CEG2014} obtained an asymptotic formula of a solution to the second-order elliptic equations of divergence form in the presence of high-conductivity inclusions or low-conductivity inclusions.

The elliptic coefficients considered in aforementioned work are assumed to be away from $0$ and $\infty$. The situation will become very different when the elliptic constants are allowed to deteriorate. For example, the scalar conductivity problem with finite conductivity $k$ turns into the perfect conductivity problem as the contrast $k$ degenerates to be $\infty$. It has been shown in various literature that the electric field, which is the gradient of a solution to the perfect conductivity equation, generally blows up as the distance $\varepsilon$ between inclusions or between the inclusions and the matrix boundary goes to zero. Its blow-up rate has been proved to be $\varepsilon^{-1/2}$ in two dimensions \cite{AKLLL2007,BC1984,BLY2009,AKL2005,Y2007,Y2009,K1993}, $(\varepsilon|\ln\varepsilon|)^{-1}$ in three dimensions \cite{BLY2009,LY2009,BLY2010,L2012}, and $\varepsilon^{-1}$ in dimensions greater than three \cite{BLY2009}, respectively. Further, more precise characterization in terms of the singularities of the electric field concentration have been established by Ammari et al. \cite{ACKLY2013}, Bonnetier and Triki \cite{BT2013}, Kang et al. \cite{KLY2013,KLY2014,KLY2015}, Li et al. \cite{LLY2019,Li2020}. Recently, the study on the singular behavior of the gradient has been generalized to the nonlinear $p$-Laplacian \cite{GN2012,G2015} and the Finsler $p$-Laplacian \cite{CS2019,CS20192}. Their method of barriers is purely nonlinear, which differs from the ones utilized in the linear case.

However, there is significant difficulty in extending the results in the perfect conductivity problem to the full elasticity. For example, the maximum principle does not hold for the systems. A delicate iterate technique was then built in \cite{LLBY2014} to overcome this difficulty, where Li et al. \cite{LLBY2014} obtained $C^{k}$ estimates for a class of elliptic systems. After that, this iterate scheme was applied to the investigation on the blow-up behavior of the gradient of a solution to the Lam\'{e} system with partially infinity coefficients. Bao, Li and Li \cite{BLL2015,BLL2017} firstly established the pointwise upper bounds on the gradient for two strictly convex inclusions. Their results indicated that the gradient blow-up rate under vectorial case is consistent with that under scalar case above. The subsequent work \cite{L2018} provided a lower bound on the gradient, which demonstrates the optimality of the blow-up rate in dimensions two and three. It is worth emphasizing that Kang and Yu \cite{KY2019} recently introduced singular functions constructed by nuclei of strain to give a precise description for the stress concentration in two dimensions. The mathematical approaches in \cite{KY2019} are based on the layer potential techniques and the variational principle, which are different from the iterate scheme above. Besides the aforementioned interior estimates, there is another direction of research to establish the boundary estimates \cite{BJL2017,LZ2020}. For one strictly convex inclusion close to touching the matrix boundary, Bao, Ju and Li \cite{BJL2017} obtained the optimal upper and lower bounds on the gradient. In particular, the lower bound on the gradient in \cite{BJL2017} was constructed by finding a blow-up factor which is a linear functional in relation to the boundary data. Subsequently, Li and Zhao \cite{LZ2020} extended to the general $m$-convex inclusions and found that some special boundary data with $k$-order growth will strengthen the singularities of the stress. This is a novel blow-up phenomenon induced by the boundary data. Although the optimal blow-up rate of the stress is derived in \cite{BJL2017,LZ2020}, the dependence on the Lam\'{e} constants and the curvature parameters of geometry is not explicit. Moreover, they only captured a single blow-up factor in the construction of the lower bound of the gradient but not the whole blow-up factor matrices, which determine essentially whether the blow-up will occur or not. By contrast with the results in \cite{BJL2017,LZ2020}, the novelty of this paper lies in capturing all the blow-up factor matrices and meanwhile showing the explicit dependence on the Lam\'{e} coefficients and the curvature parameters of geometry in the optimal upper and lower bounds on the gradient. Further, these blow-up factor matrices can be applied to establish the corresponding asymptotic expansions of the gradient.
%Additionally, it is worthwhile to mention that Ando, Kang and Miyanishi \cite{AKM2020} recently studied the Neumann-Poincar\'{e} type operator associated with the Lam\'{e} system and proved that its eigenvalues converge at a polynomial rate if the boundary of the domain is smooth and at an exponential rate on real analytic boundaries, respectively.

The outline of this paper is as follows. Section \ref{SL01} is to describe the elasticity problem and state the main results. In Section \ref{SL003}, we consider a general boundary value problem \eqref{P2.008} and give the leading term for the gradient of the solution. Section \ref{SL004} is dedicated to the proofs of Theorems \ref{Lthm066} and \ref{thma002}. The asymptotic behavior of the stress concentration for a strictly convex inclusion close to touching the matrix boundary is analyzed in Section \ref{SEC005}.
\section{Problem formulation and Main results}\label{SL01}
\subsection{The elasticity problem}
In this paper, the Lam\'{e} system in linear elasticity is considered. Let $D\subset\mathbb{R}^{d}\,(d\geq2)$ be a bounded domain with $C^{2,\alpha}$ boundary, and $D_{1}^{\ast}$ be a convex open set in $D$ with $C^{2,\alpha}$ boundary, $0<\alpha<1$, which touches the external boundary $\partial D$ at one point. By a translation and rotation of the coordinates, if necessary, let
\begin{align*}
\partial D_{1}^{\ast}\cap\partial D=\{0\}\in\mathbb{R}^{d},\quad D_{1}^{\ast}\subset\{(x',x_{d})\in\mathbb{R}^{d}\,|\,x_{d}>0\}.
\end{align*}
Here and below, we denote ($d-1$)-dimensional variables and domains by adding superscript prime. After moving upward $D_{1}^{\ast}$ along $x_{d}$-axis by a arbitrarily small constant $\varepsilon>0$, we get
\begin{align*}
D_{1}^{\varepsilon}:=D_{1}^{\ast}+(0',\varepsilon).
\end{align*}
For simplicity, let
\begin{align*}
D_{1}:=D_{1}^{\varepsilon},\quad\mathrm{and}\;\Omega:=D\setminus\overline{D}_{1}.
\end{align*}

Suppose that $\Omega$ and $D_{1}$ are filled with two different isotropic and homogeneous elastic materials with different Lam\'{e} constants $(\lambda,\mu)$ and $(\lambda_{1},\mu_{1})$, respectively. The elasticity tensors $\mathbb{C}^0$ and $\mathbb{C}^1$ for the background $\Omega$ and the inclusion $D_{1}$ can be written, respectively, as
$$C_{ijkl}^0=\lambda\delta_{ij}\delta_{kl} +\mu(\delta_{ik}\delta_{jl}+\delta_{il}\delta_{jk}),$$
and
$$C_{ijkl}^1=\lambda_1\delta_{ij}\delta_{kl} +\mu_1(\delta_{ik}\delta_{jl}+\delta_{il}\delta_{jk}),$$
where $i,j,k,l=1,2,...,d$ and $\delta_{ij}$ denotes the kronecker symbol: $\delta_{ij}=0$ for $i\neq j$, $\delta_{ij}=1$ for $i=j$. For a given boundary data $\varphi=(\varphi^{1},\varphi^{2},...,\varphi^{d})^{T}$, we consider the following Dirichlet problem for the Lam$\mathrm{\acute{e}}$ system with piecewise constant coefficients
\begin{align}\label{La.001}
\begin{cases}
\nabla\cdot \left((\chi_{\Omega}\mathbb{C}^0+\chi_{D_{1}}\mathbb{C}^1)e(u)\right)=0,&\hbox{in}~D,\\
u=\varphi, &\hbox{on}~\partial{D},
\end{cases}
\end{align}
where $u=(u^{1},u^{2},...,u^{d})^{T}:D\rightarrow\mathbb{R}^{d}$ represents the elastic displacement field, $e(u)=\frac{1}{2}\left(\nabla u+(\nabla u)^{T}\right)$ denotes the elastic strain, $\chi_{\Omega}$ and $\chi_{D_{1}}$ are the characteristic functions of $\Omega$ and $D_{1}$, respectively. As shown in \cite{BLL2015}, if the strong ellipticity condition
\begin{align*}%\label{ellipticity}
\mu>0,\quad d\lambda+2\mu>0,\quad \mu_1>0,\quad d\lambda_1+2\mu_1>0
\end{align*}
holds, then there exists a unique solution $u\in H^{1}(D;\mathbb{R}^{d})$ of the Dirichlet problem (\ref{La.001}) for given $\varphi\in H^{1}( D;\mathbb{R}^{d})$.

Denote the linear space of rigid displacement in $\mathbb{R}^{d}$ by
$$\Psi:=\{\psi\in C^1(\mathbb{R}^{d}; \mathbb{R}^{d})\ |\ \nabla\psi+(\nabla\psi)^T=0\}.$$
A base of $\Psi$ is written as
\begin{align}\label{OPP}
\{\psi_{\alpha}\}_{1}^{\frac{d(d+1)}{2}}:=\left\{\;e_{i},\;x_{k}e_{j}-x_{j}e_{k}\;\big|\;1\leq\,i\leq\,d,\;1\leq\,j<k\leq\,d\;\right\},
\end{align}
where $\{e_{1},e_{2},...,e_{d}\}$ is the standard basis in $\mathbb{R}^{d}$. For the convenience of computations, we adopt the following order with respect to $\psi_{\alpha}$: $\psi_{\alpha}=e_{\alpha}$ if $\alpha=1,2,...,d$; $\psi_{\alpha}=x_{d}e_{\alpha-d}-x_{\alpha-d}e_{d}$ if $\alpha=d+1,...,2d-1$; if $\alpha=2d,...,\frac{d(d+2)}{2}\,(d\geq3)$, there exist two indices $1\leq i_{\alpha}<j_{\alpha}<d$ such that
$\psi_{\alpha}=(0,...,0,x_{j_{\alpha}},0,...,0,-x_{i_{\alpha}},0,...,0)$.
%for $\alpha=2d,...,3d-3\,(d\geq3)$, $\psi_{\alpha}=x_{\alpha+2-2d}e_{1}-x_{1}e_{\alpha+2-2d}$; for $\alpha=3d-2,...,\frac{d(d+2)}{2}\,(d\geq4)$, there exist two indices $1\leq i_{\alpha}<j_{\alpha}\leq d$ such that
%$\psi_{\alpha}=(0,...,0,x_{j_{\alpha}},0,...,0,-x_{i_{\alpha}},0,...,0)$.

It has been proved in the Appendix of \cite{BLL2015} that for fixed $\lambda$ and $\mu$,
\begin{align*}%\label{limit}
u_{\lambda_1,\mu_1}\rightarrow u\quad\hbox{in}\ H^1(D; \mathbb{R}^{d}),\quad \hbox{as}\ \min\{\mu_1, d\lambda_1+2\mu_1\}\rightarrow\infty,
\end{align*}
where $u_{\lambda_{1},\mu_{1}}$ is the solution of \eqref{La.001} and $u\in H^1(D; \mathbb{R}^{d})$ verifies
\begin{align}\label{La.002}
\begin{cases}
\mathcal{L}_{\lambda, \mu}u:=\nabla\cdot(\mathbb{C}^0e(u))=0,\quad&\hbox{in}\ \Omega,\\
u=C^{\alpha}\psi_{\alpha},&\hbox{on}\ \partial{D}_{1},\\
\int_{\partial{D}_{1}}\frac{\partial u}{\partial \nu_0}\Big|_{+}\cdot\psi_{\alpha}=0,&\alpha=1,2,...,\frac{d(d+1)}{2},\\
u=\varphi,&\hbox{on}\ \partial{D}.
\end{cases}
\end{align}
Here the free constants $C^{\alpha}$, $\alpha=1,2,...,\frac{d(d+1)}{2}$ are determined by the third line of \eqref{La.002} and the co-normal derivative is given by
\begin{align*}
\frac{\partial u}{\partial \nu_0}\Big|_{+}&:=(\mathbb{C}^0e(u))\nu=\lambda(\nabla\cdot u)\nu+\mu(\nabla u+(\nabla u)^T)\nu,
\end{align*}
where $\nu$ is the unit outer normal of $\partial D_{1}$ and the subscript $+$ denotes the limit from outside the domain. We here would like to remark that the existence, uniqueness and regularity of weak solutions to problem \eqref{La.002} have also been established in the Appendix of \cite{BLL2015}. Furthermore, the $H^{1}$-solution $u$ of problem (\ref{La.002}) was improved to be of $C^1(\overline{\Omega};\mathbb{R}^{d})\cap C^1(\overline{D}_{1};\mathbb{R}^{d})$ for any $C^{2,\alpha}$-domain.

Suppose that $\partial D_{1}$ and $\partial D$ near the origin are, respectively, the graphs of two $C^{2,\alpha}$ functions $\varepsilon+h_{1}$ and $h$ with respect to $x'$. To be specific, for some positive constant $R$, independent of $\varepsilon$, let $h_{1}$ and $h$ satisfy that
\begin{enumerate}
{\it\item[(\bf{H1})]
$\kappa_{1}|x'|^{m}\leq h_{1}(x')-h(x')\leq\kappa_{2}|x'|^{m},\;\mbox{if}\;\,x'\in B'_{2R},$
\item[(\bf{H2})]
$|\nabla_{x'}^{j}h_{1}(x')|,\,|\nabla_{x'}^{j}h(x')|\leq \kappa_{3}|x'|^{m-j},\;\mbox{if}\;\,x'\in B_{2R}',\;j=1,2,$
\item[(\bf{H3})]
$\|h_{1}\|_{C^{2,\alpha}(B'_{2R})}+\|h\|_{C^{2,\alpha}(B'_{2R})}\leq \kappa_{4},$}
\end{enumerate}
where $m\geq2$ is an integer and $\kappa_{i}$, $i=1,2,3,4$ are all positive constants independent of $\varepsilon$. We additionally assume that $h_{1}-h$ is even with respect to each $x_{i}$ in $B_{R}'$, $i=1,...,d-1$. It is worth emphasizing that the inclusions under conditions ({\bf{H1}})--({\bf{H3}}) actually contain the strictly convex inclusions, which were extensively studied in previous work \cite{BLL2015,BLL2017,BJL2017,KY2019}.

For $z'\in B'_{R}$ and $0<t\leq2R$, define the thin gap
\begin{align*}
\Omega_{t}(z'):=&\left\{x\in \mathbb{R}^{d}~\big|~h(x')<x_{d}<\varepsilon+h_{1}(x'),~|x'-z'|<{t}\right\}.
\end{align*}
For notational simplicity we adopt the abbreviated notation $\Omega_{t}$ to represent $\Omega_{t}(0')$. Its top and bottom boundaries can be, respectively, written by
\begin{align*}
\Gamma^{+}_{t}:=\left\{x\in\mathbb{R}^{d}|\,x_{d}=\varepsilon+h_{1}(x'),\;|x'|<t\right\},~~
\Gamma^{-}_{t}:=\left\{x\in\mathbb{R}^{d}|\,x_{d}=h(x'),\;|x'|<t\right\}.
\end{align*}

Introduce a scalar auxiliary function $\bar{v}\in C^{2}(\mathbb{R}^{d})$ such that $\bar{v}=1$ on $\partial D_{1}$, $\bar{v}=0$ on $\partial D$,
\begin{align}\label{BATL001}
\bar{v}(x',x_{d}):=\frac{x_{d}-h(x')}{\varepsilon+h_{1}(x')-h(x')},\;\,\mathrm{in}\;\Omega_{2R},\quad\mbox{and}~\|\bar{v}\|_{C^{2}(\Omega\setminus\Omega_{R})}\leq C.
\end{align}
Denote
\begin{align}\label{deta}
\delta(x'):=\varepsilon+h_{1}(x')-h(x'),\quad f(\bar{v}):=\frac{1}{2}\left(\bar{v}-\frac{1}{2}\right)^{2}-\frac{1}{8}.
\end{align}
For $x\in\Omega_{2R}$, we define a family of vector-valued auxiliary functions as follows:
\begin{align}\label{CAN01}
\bar{u}_{0}=&\varphi(x',h(x'))(1-\bar{v}(x',x_{d}))+\mathcal{F}_{0},
\end{align}
and
\begin{align}\label{OPQ}
\bar{u}_{\alpha}(x',x_{d})&=\psi_{\alpha}\bar{v}(x',x_{d})+\mathcal{F}_{\alpha},\quad\alpha=1,2,...,\frac{d(d+1)}{2},
\end{align}
where $\psi_{\alpha}$, $\alpha=1,2,...,\frac{d(d+1)}{2}$, are defined in (\ref{OPP}) and
\begin{align}\label{OPQ1}
\mathcal{F}_{0}=&-\frac{\lambda+\mu}{\mu}f(\bar{v})\varphi^{d}(x',h(x'))\sum^{d-1}_{i=1}\partial_{x_{i}}\delta\,e_{i}\notag\\
&-\frac{\lambda+\mu}{\lambda+2\mu}f(\bar{v})\sum^{d-1}_{i=1}\varphi^{i}(x',h(x'))\partial_{x_{i}}\delta\,e_{d},
\end{align}
and, for $\alpha=1,2,...,\frac{d(d+1)}{2}$,
\begin{align}\label{OPAKLN01}
\mathcal{F}_{\alpha}=&\frac{\lambda+\mu}{\mu}f(\bar{v})\psi^{d}_{\alpha}\sum^{d-1}_{i=1}\partial_{x_{i}}\delta\,e_{i}+\frac{\lambda+\mu}{\lambda+2\mu}f(\bar{v})\sum^{d-1}_{i=1}\psi^{i}_{\alpha}\partial_{x_{i}}\delta\,e_{d}.
\end{align}
We here would like to point out that the correction terms $\mathcal{F}_{\alpha}$, $\alpha=1,2,...,d$ were captured in the previous work \cite{LX2020}. In this paper, we further find the correction term for a more general boundary value problem \eqref{P2.008}, see Theorem \ref{thm8698} below.

\subsection{Main results}

To begin with, we first recall the following three types of boundary data introduced in \cite{LZ2020}, which are classified according to the parity. To be specific, let $\varphi=(\varphi^{1},...,\varphi^{d})\neq0$ on $\Gamma^{-}_{R}$ satisfying that for $x\in\Gamma^{-}_{R}$,
\begin{itemize}
{\it
\item[(\bf{A1})] for $i=1,2,...,d$, $j=1,...,d-1$, $\varphi^{i}(x)$ is an even function of each $x_{j}$;
\item[(\bf{A2})] if $d=2$, for $i=1,2$, $\varphi^{i}(x)$ is odd with respect to $x_{1}$; if $d\geq3$, for $i=1,...,d-1$, $\varphi^{i}(x)$ is odd with respect to some $x_{j_{i}}$, $j_{i}\in\{1,...,d-1\}$, and $\varphi^{d}(x)$ is odd with respect to $x_{1}$ and even with respect to each $x_{j}$, $j=2,...,d-1$;
\item[(\bf{A3})] if $d=2$, $\varphi^{1}(x)$ is odd with respect to $x_{1}$, and $\varphi^{2}(x)=0$; if $d\geq3$, for $i=1,...,d-1$, $\varphi^{i}(x)$ is odd with respect to $x_{i}$, and $\varphi^{d}(x)$ is odd with respect to $x_{1}$ and $x_{2}$, respectively.}
\end{itemize}
For the convenience of notations, introduce the blow-up rate indices as follows: for $i=0,2$ and $i=k,k+1$, $k\geq2$,
\begin{align}\label{rate}
\rho_{i}(d,m;\varepsilon):=&
\begin{cases}
\varepsilon^{\frac{d+i-1}{m}-1},&m>d+i-1,\\
|\ln\varepsilon|,&m=d+i-1,\\
1,&m<d+i-1.
\end{cases}
\end{align}
Under these three types of boundary data, Li and Zhao \cite{LZ2020} obtained the following results.

{\bf Theorem A (Corollary 1.6 of \cite{LZ2020}).}
Assume that $D_{1}\subset D\subseteq\mathbb{R}^{d}\,(d\geq2)$ are defined as above, conditions ({\bf{H1}})--({\bf{H3}}) hold. Let $u\in H^{1}(D;\mathbb{R}^{d})\cap C^{1}(\overline{\Omega};\mathbb{R}^{d})$ be the solution of \eqref{La.002}. Assume that one of ({\bf{A1}}), ({\bf{A2}}) and ({\bf{A3}}) holds. If $\varphi\in C^{2}(\partial D;\mathbb{R}^{d})$ satisfies the $k$-order growth condition,
\begin{align}\label{growth}
|\varphi(x)|\leq \eta\,|x|^{k},\quad\;\,\mathrm{on}\;\Gamma^{-}_{R},
\end{align}
for some integer $k>0$ and a positive constant $\eta$. Then for a sufficiently small $\varepsilon>0$,
\begin{align}\label{DALN666}
|\nabla u(x)|\leq&\frac{C}{\varepsilon+|x'|^{m}}\left[\eta\rho_{A}(\varepsilon)+\frac{\|\varphi\|_{C^{2}(\partial D)}}{\rho_{0}(d,m;\varepsilon)}+|x'|\left(\eta\rho_{B}(\varepsilon)+\frac{\|\varphi\|_{C^{2}(\partial D)}}{\rho_{2}(d,m;\varepsilon)}\right)\right]\notag\\
&+\frac{\eta|x'|^{k}}{\varepsilon+|x'|^{m}}+C\|\varphi\|_{C^{2}(\partial D)},\quad\;\,x\in\Omega_{R},
\end{align}
where
\begin{align}
\rho_{A}(\varepsilon)=&
\begin{cases}
\rho_{k}(d,m;\varepsilon)/\rho_{0}(n,m;\varepsilon),&\text{for\;case\;({\bf{A1}})},\\
1/\rho_{0}(d,m;\varepsilon),&\text{otherwise},
\end{cases}\label{LT001}\\
\rho_{B}(\varepsilon)=&\begin{cases}
\rho_{k+1}(d,m;\varepsilon)/\rho_{2}(n,m;\varepsilon),&\text{for\;case\;({\bf{A2}})},\\
1/\rho_{2}(d,m;\varepsilon),&\text{otherwise}.
\end{cases}\label{LT002}
\end{align}

From \eqref{DALN666}, we see that the singular behavior of $|\nabla u|$ is determined by the following three parts: $\rho_{A}(\varepsilon)\varepsilon^{-1}$, $\rho_{B}(\varepsilon)\varepsilon^{1/m-1}$ and $\varepsilon^{k/m-1}\,(m>k)$. We point out that the blow-up rate $\rho_{A}(\varepsilon)\varepsilon^{-1}$ is achieved at the $(d-1)$-dimensional ball $\{|x'|\leq\sqrt[m]{\varepsilon}\}\cap\Omega$, while, the latter two blow-up rates are generated on the cylinder surface $\{|x'|=\sqrt[m]{\varepsilon}\}\cap\Omega$. Furthermore, it follows from \eqref{DALN666} that

$(a)$ if condition ({\bf{A1}}) holds, then for $m\leq d$, $k\geq1$ or $m\geq d+k$, $k\geq1$, $|\nabla u|$ blows up at the rate of $\frac{\rho_{k}(d,m;\varepsilon)}{\varepsilon\rho_{0}(d,m;\varepsilon)}$, while, for $d+k>m>d,\,k>1$, the blow-up rate of $|\nabla u|$ is $\frac{1}{\varepsilon^{1-1/m}\rho_{2}(d,m;\varepsilon)}$;

$(b)$ if condition ({\bf{A2}}) holds, then for $m\leq d$, $k\geq1$, $|\nabla u|$ blows up at the rate of $\frac{1}{\varepsilon\rho_{0}(d,m;\varepsilon)}$, while, for $m>d$, $k\geq1$, its blow-up rate is $\frac{\rho_{k+1}(d,m;\varepsilon)}{\varepsilon^{1-1/m}\rho_{2}(d,m;\varepsilon)}$. In particular, when $m=d+k,\,k=1$ or $m>d+k,\,k\geq1$, we have $\frac{\rho_{k+1}(d,m;\varepsilon)}{\varepsilon^{1-1/m}\rho_{2}(d,m;\varepsilon)}=\varepsilon^{k/m-1}$;

$(c)$ if condition ({\bf{A3}}) holds, then we obtain that for $m\leq d$, $k\geq1$, the blow-up rate of $|\nabla u|$ is  $\frac{1}{\varepsilon\rho_{0}(n,m;\varepsilon)}$; $d+k>m>d,\,k>1$, $|\nabla u|$ blows up at the rate of $\frac{1}{\varepsilon^{1-1/m}\rho_{2}(d,m;\varepsilon)}$; for $m\geq d+k,\,k\geq1$, its blow-up rate is $\frac{1}{\varepsilon^{1-k/m}}$. Especially when $m=d+k,\,k>1$, we get $\frac{1}{\varepsilon^{1-1/m}\rho_{2}(d,m;\varepsilon)}=\frac{1}{\varepsilon^{1-k/m}}$.

To show the optimality of the blow-up rates summarized in $(a)$--$(c)$ above, we select three special examples from conditions ({\bf{A1}})--({\bf{A3}}) to establish the optimal upper and lower bounds on the blow-up rate of the gradient and meanwhile extract the accurate information of the boundary data $\varphi$, the Lam\'{e} constants $\lambda$ and $\mu$, and the curvature parameters $\tau_{1}$ and $\tau_{2}$ from the constant $C$ in \eqref{DALN666}. Specifically, suppose that for $x\in\Gamma^{-}_{R}$,
\begin{itemize}
{\it
\item[({\bf{E1}})] $\varphi^{i}=-\eta|x'|^{k}$, $i=1,2,...,d$,
\item[({\bf{E2}})] $\varphi^{i}=0$, $i=1,...,d-1$, $\varphi^{d}=\eta x_{1}|x_{1}|^{k-1}$,
\item[({\bf{E3}})] $\varphi^{i}=\eta x_{i}|x_{i}|^{k-1}$, $i=1,...,d-1$, $\varphi^{d}=0$,}
\end{itemize}
where $\eta$ is a positive constant and $k$ is a positive integer. To ensure that $\varphi\in C^{2}(\partial D)$, if condition ({\bf{E1}}) holds, we consider $k\geq2$; if condition ({\bf{E2}}) or ({\bf{E3}}) holds, we consider $k\geq1,\,k\neq2$.

Let $\Omega^{\ast}:=D\setminus \overline{D_{1}^{\ast}}$. For $\alpha,\beta=1,2,...,\frac{d(d+1)}{2}$, define
\begin{align}\label{FNCL001}
a_{\alpha\beta}^{\ast}:=\int_{\Omega^{\ast}}(\mathbb{C}^0e(u_{\alpha}^{\ast}), e(u_{\beta}^{\ast}))dx,\quad Q_{\alpha}^{\ast}[\varphi]:=\int_{\partial D_{1}^{\ast}}\frac{\partial u_{0}^{\ast}}{\partial\nu_{0}}\Big|_{+}\cdot\psi_{\alpha},
\end{align}
where $\varphi\in C^{2}(\partial D;\mathbb{R}^{d})$ and $u_{\alpha}^{\ast}\in{C}^{2}(\Omega^{\ast};\mathbb{R}^d)$, $\alpha=0,1,...,\frac{d(d+1)}{2}$, respectively, verify
\begin{equation}\label{l03.001}
\begin{cases}
\mathcal{L}_{\lambda,\mu}u_{0}^{\ast}=0,&\mathrm{in}\;\Omega^{\ast},\\
u_{0}^{\ast}=0,&\mathrm{on}\;\partial D_{1}^{\ast}\setminus\{0\},\\
u_{0}^{\ast}=\varphi(x),&\mathrm{on}\;\partial D,
\end{cases}\quad
\begin{cases}
\mathcal{L}_{\lambda,\mu}u_{\alpha}^{\ast}=0,&\mathrm{in}~\Omega^{\ast},\\
u_{\alpha}^{\ast}=\psi_{\alpha},&\mathrm{on}~\partial{D}_{1}^{\ast}\setminus\{0\},\\
u_{\alpha}^{\ast}=0,&\mathrm{on}~\partial{D}.
\end{cases}
\end{equation}
We would like to remark that the definitions of $a_{\alpha\beta}^{\ast}$ and $Q_{\alpha}^{\ast}[\varphi]$ are valid only in some cases, see Lemmas \ref{KM323} and \ref{lemmabc} below. Suppose that for some $\kappa_{5}>0$,
\begin{align}\label{ACDT001}
\kappa_{5}\leq\mu,d\lambda+2\mu\leq\frac{1}{\kappa_{5}}.
\end{align}
Introduce the Lam\'{e} constants $\mathcal{L}_{d}^{\alpha}$, $\alpha=1,2,...,\frac{d(d+1)}{2}$ as follows:
\begin{align}
&(\mathcal{L}_{2}^{1},\mathcal{L}_{2}^{2},\mathcal{L}_{2}^{3})=(\mu,\lambda+2\mu,\lambda+2\mu),\quad d=2,\label{AZ}\\
&(\mathcal{L}_{d}^{1},...,\mathcal{L}_{d}^{d-1},\mathcal{L}_{d}^{d},...,\mathcal{L}_{d}^{2d-1},\mathcal{L}_{d}^{2d},...,\mathcal{L}_{d}^{\frac{d(d+1)}{2}})\notag\\
&=(\mu,...,\mu,\lambda+2\mu,...,\lambda+2\mu,2\mu,...,2\mu),\quad d\geq3.\label{AZ110}
\end{align}

Before stating our main results, we first introduce the blow-up factor matrices as follows:
\begin{gather*}\mathbb{A}^{\ast}=\begin{pmatrix} a_{11}^{\ast}&\cdots&a_{1d}^{\ast} \\\\ \vdots&\ddots&\vdots\\\\a_{d1}^{\ast}&\cdots&a_{dd}^{\ast}\end{pmatrix}  ,\;\,
\mathbb{B}^{\ast}=\begin{pmatrix}a_{1\,d+1}^{\ast}&\cdots&a_{1\,\frac{d(d+1)}{2}}^{\ast} \\\\ \vdots&\ddots&\vdots\\\\ a_{d\,d+1}^{\ast}&\cdots&a_{d\,\frac{d(d+1)}{2}}^{\ast}\end{pmatrix},
\end{gather*}
\begin{gather}
\mathbb{C}^{\ast}=\begin{pmatrix}a_{d+1\,1}^{\ast}&\cdots&a_{d+1\,d}^{\ast} \\\\ \vdots&\ddots&\vdots\\\\ a_{\frac{d(d+1)}{2}\,1}^{\ast}&\cdots&a_{\frac{d(d+1)}{2}\,\frac{d(d+1)}{2}}^{\ast}\end{pmatrix},\notag\\
\mathbb{D}^{\ast}=\begin{pmatrix} a_{d+1\,d+1}^{\ast}&\cdots&a_{d+1\,\frac{d(d+1)}{2}}^{\ast} \\\\ \vdots&\ddots&\vdots\\\\ a_{\frac{d(d+1)}{2}\,d+1}^{\ast}&\cdots&a_{\frac{d(d+1)}{2}\,\frac{d(d+1)}{2}}^{\ast}\end{pmatrix}.\label{MTABZR001}
\end{gather}
Write
\begin{align}\label{KR001}
\mathbb{F}^{\ast}=\begin{pmatrix} \mathbb{A}^{\ast}&\mathbb{B}^{\ast} \\  \mathbb{C}^{\ast}&\mathbb{D}^{\ast}
\end{pmatrix}.
\end{align}
For $\alpha=1,2,...,d,$ define
\begin{gather}\label{MATA001}
\mathbb{F}_{1}^{\ast\alpha}[\varphi]:=
\begin{pmatrix} Q_{\alpha}^{\ast}[\varphi]&a_{\alpha\,d+1}^{\ast}&\cdots&a_{\alpha\,\frac{d(d+1)}{2}}^{\ast} \\ Q_{d+1}^{\ast}[\varphi]&a^{\ast}_{d+1\,d+1}&\cdots&a^{\ast}_{d+1\,\frac{d(d+1)}{2}} \\ \vdots&\vdots&\ddots&\vdots\\Q^{\ast}_{\frac{d(d+1)}{2}}[\varphi]&a^{\ast}_{\frac{d(d+1)}{2}\,d+1}&\cdots&a^{\ast}_{\frac{d(d+1)}{2}\,\frac{d(d+1)}{2}}
\end{pmatrix}.
\end{gather}
For $\alpha=1,2,...,\frac{d(d+1)}{2}$, by substituting the column vector $\big(Q_{1}^{\ast}[\varphi],Q_{2}^{\ast}[\varphi],...,Q_{\frac{d(d+1)}{2}}^{\ast}[\varphi]\big)^{T}$ for the elements of $\alpha$-th column in the matrix $\mathbb{F}^{\ast}$, we generate the new matrix $\mathbb{F}_{3}^{\ast\alpha}[\varphi]$ as follows:
\begin{gather}\label{LRNMA001}
\mathbb{F}_{3}^{\ast\alpha}[\varphi]=:
\begin{pmatrix}
a_{11}^{\ast}&\cdots&Q_{1}^{\ast}[\varphi]&\cdots&a_{1\,\frac{d(d+1)}{2}}^{\ast} \\\\ \vdots&\ddots&\vdots&\ddots&\vdots\\\\a_{\frac{d(d+1)}{2}\,1}^{\ast}&\cdots&Q_{\frac{d(d+1)}{2}}^{\ast}[\varphi]&\cdots&a_{\frac{d(d+1)}{2}\frac{d(d+1)}{2}}^{\ast}
\end{pmatrix}.
\end{gather}

Here and throughout this paper, $a\lesssim b$ (or $a\gtrsim b$) represents $a\leq Cb$ (or $a\geq \frac{1}{C}b$) for some positive constant $C$, depending only on $d,R,\kappa_{3},\kappa_{4},\kappa_{5}$ and the $C^{2,\alpha}$ norms of $\partial D_{1}$ and $\partial D$, but not on the distance parameter $\varepsilon$, the boundary data $\varphi$, the curvature parameters $\kappa_{1}$, $\kappa_{2}$ and the Lam\'{e} constants $\mathcal{L}_{d}^{\alpha}$, $\alpha=1,2,...,\frac{d(d+1)}{2}$. Without loss of generality, we set $\varphi(0)$=0. Otherwise, we substitute $u-\varphi(0)$ for $u$ throughout this paper. Notice that following the standard elliptic theory (see \cite{ADN1959,ADN1964}), we conclude that
\begin{align*}
\|\nabla u\|_{L^{\infty}(\Omega\setminus\Omega_{R})}\leq C\|\varphi\|_{C^{2}(\partial D)},
\end{align*}
from which it suffices to study the singular behavior of the gradient $\nabla u$ in the thin gap $\Omega_{R}$ in the following.

Our first main result is concerned with the optimal upper and lower bounds on the blow-up rate of the gradient in the shortest segment $\{x'=0'\}\cap\Omega$ between the inclusion and the matrix boundary.
\begin{theorem}\label{Lthm066}
Assume that $D_{1}\subset D\subseteq\mathbb{R}^{d}\,(d\geq2)$ are defined as above, conditions $\mathrm{(}${\bf{H1}}$\mathrm{)}$--$\mathrm{(}${\bf{H3}}$\mathrm{)}$ hold, and $\varphi\in C^{2}(\partial D;\mathbb{R}^{d})$. Let $u\in H^{1}(D;\mathbb{R}^{d})\cap C^{1}(\overline{\Omega};\mathbb{R}^{d})$ be the solution of (\ref{La.002}). Then for a sufficiently small $\varepsilon>0$, $x\in\{x'=0'\}\cap\Omega$,

$(\rm{i})$ if condition $\mathrm{(}${\bf{E1}}$\mathrm{)}$, $\mathrm{(}${\bf{E2}}$\mathrm{)}$ or $\mathrm{(}${\bf{E3}}$\mathrm{)}$ holds for $m\leq d$, then
\begin{align*}
|\nabla u|\lesssim
\begin{cases}
\frac{\max\limits_{1\leq\alpha\leq d}\kappa_{2}^{\frac{d-1}{m}}|\mathcal{L}_{d}^{\alpha}|^{-1}|\det\mathbb{F}_{1}^{\ast\alpha}[\varphi]|}{\det\mathbb{D}^{\ast}}\frac{1}{\varepsilon\rho_{0}(d,m;\varepsilon)},&d-1\leq m\leq d,\\
\frac{\max\limits_{1\leq\alpha\leq d}|\det\mathbb{F}_{3}^{\ast\alpha}[\varphi]|}{\det\mathbb{F}^{\ast}}\frac{1}{\varepsilon},&m<d-1,
\end{cases}
\end{align*}
and, there exists some integer $1\leq\alpha_{0}\leq d$ such that $\det\mathbb{F}_{1}^{\ast\alpha_{0}}[\varphi]\neq0$ and $\det\mathbb{F}_{3}^{\ast\alpha_{0}}[\varphi]\neq0$,
\begin{align*}
|\nabla u|\gtrsim&
\begin{cases}
\frac{\kappa_{1}^{\frac{d-1}{m}}|\det\mathbb{F}_{1}^{\ast\alpha_{0}}[\varphi]|}{|\mathcal{L}_{d}^{\alpha_{0}}|\det\mathbb{D}^{\ast}}\frac{1}{\varepsilon\rho_{0}(d,m;\varepsilon)},&d-1\leq m\leq d,\\
\frac{|\det\mathbb{F}_{3}^{\ast\alpha_{0}}[\varphi]|}{\det\mathbb{F}^{\ast}}\frac{1}{\varepsilon},&m<d-1;
\end{cases}
\end{align*}

$(\rm{ii})$ if condition $\mathrm{(}${\bf{E1}}$\mathrm{)}$ holds for $m\geq d+k$, then
\begin{align*}
\frac{\eta\kappa_{1}^{\frac{d-1}{m}}}{\kappa_{2}^{\frac{d+k-1}{m}}}\frac{\rho_{k}(d,m;\varepsilon)}{\varepsilon\rho_{0}(d,m;\varepsilon)}\lesssim|\nabla u|\lesssim\frac{\eta\kappa_{2}^{\frac{d-1}{m}}}{\kappa_{1}^{\frac{d+k-1}{m}}}\frac{\rho_{k}(d,m;\varepsilon)}{\varepsilon\rho_{0}(d,m;\varepsilon)},
\end{align*}
where $\kappa_{i}$, $i=1,2$ are defined in condition $\mathrm{(}${\bf{H1}}$\mathrm{)}$, $\mathcal{L}_{d}^{\alpha}$, $\alpha=1,2,...,d$ are defined by \eqref{AZ}--\eqref{AZ110}, $\rho_{i}(d,m;\varepsilon)$, $i=0,k$ are defined in \eqref{rate}, the blow-up factor matrices $\mathbb{D}^{\ast}$, $\mathbb{F}^{\ast}$, $\mathbb{A}_{1}^{\ast\alpha}$, $\alpha=1,2,...,d$ and $\mathbb{F}_{2}^{\ast\alpha}[\varphi]$, $\alpha=1,2,...,\frac{d(d+1)}{2}$ are defined by \eqref{MTABZR001}--\eqref{LRNMA001}, respectively.

\end{theorem}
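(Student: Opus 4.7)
The standard decomposition for this class of problems writes the solution to \eqref{La.002} as
\begin{align*}
u = \sum_{\alpha=1}^{d(d+1)/2} C^\alpha v_\alpha + v_0,
\end{align*}
where $v_\alpha \in C^1(\overline{\Omega};\mathbb{R}^d)$ is the $\mathcal{L}_{\lambda,\mu}$-harmonic extension with boundary data $\psi_\alpha$ on $\partial D_1$ and $0$ on $\partial D$, while $v_0$ carries the data $\varphi$ on $\partial D$ and $0$ on $\partial D_1$. The first step is to control $v_\alpha$ and $v_0$ by the explicit auxiliary functions $\bar u_\alpha$, $\bar u_0$ defined in \eqref{CAN01}--\eqref{OPAKLN01}, using Theorem~\ref{thm8698} (the leading-term result for \eqref{P2.008} already stated in the excerpt): this yields pointwise expansions $|\nabla v_\alpha - \nabla \bar u_\alpha|$ and $|\nabla v_0 - \nabla \bar u_0|$ that are of strictly lower order than the leading $\delta(x')^{-1}$ singularity on $\{x'=0'\}\cap\Omega$. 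In particular, on the short segment $\{x'=0'\}\cap\Omega$, the leading part of $\nabla v_\alpha$ is, up to the correction term in $\mathcal{F}_\alpha$, dominated by $\partial_{x_d}\bar v|_{x'=0'} = 1/\delta(0')=1/\varepsilon$ multiplied by the component $\psi_\alpha^i/\mathcal{L}_d^\alpha$-type factor coming from the Lamé constants in \eqref{AZ}--\eqref{AZ110}.

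The next step is to identify the constants $C^\alpha$. Substituting the decomposition into the third line of \eqref{La.002} gives the linear system
\begin{align*}
\sum_{\beta=1}^{d(d+1)/2} a_{\alpha\beta}^{\varepsilon}\, C^\beta = Q_{\alpha}^{\varepsilon}[\varphi], \qquad \alpha=1,\dots,\tfrac{d(d+1)}{2},
\end{align*}
with $a_{\alpha\beta}^\varepsilon := \int_\Omega (\mathbb{C}^0 e(v_\alpha),e(v_\beta))\,dx$ and $Q_\alpha^\varepsilon[\varphi] := -\int_{\partial D_1} \frac{\partial v_0}{\partial\nu_0}\big|_+ \cdot \psi_\alpha$. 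Using the auxiliary-function comparison together with the energy integral of the leading term, I would establish the two-scale asymptotics
\begin{align*}
a_{\alpha\alpha}^\varepsilon = \mathcal{L}_d^\alpha\, \mathcal{I}_{m,d}(\kappa_1,\kappa_2;\varepsilon) + a_{\alpha\alpha}^* + o(1), \qquad \alpha=1,\dots,d,
\end{align*}
where $\mathcal{I}_{m,d}$ denotes the explicit shape integral producing $\rho_0(d,m;\varepsilon)/\varepsilon$ (between $\kappa_1^{(d-1)/m}$ and $\kappa_2^{(d-1)/m}$ multiples of it), and $a_{\alpha\beta}^\varepsilon\to a_{\alpha\beta}^*$ in the remaining indices (Lemma~\ref{KM323}-type statements referenced in the excerpt). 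The same scheme gives $Q_\alpha^\varepsilon[\varphi] \to Q_\alpha^*[\varphi]$ for those $\alpha$ where the limit is well defined, and analogous blow-up estimates under hypothesis (\textbf{E1}) for the $k$-growth boundary data.

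With those asymptotics in hand, Cramer's rule produces $C^\alpha = \det(\mathbb{F}^{\varepsilon,\alpha}[\varphi])/\det(\mathbb{F}^\varepsilon)$. In the regime $m < d-1$ every $a_{\alpha\beta}^\varepsilon$ and every $Q_\alpha^\varepsilon[\varphi]$ stays bounded, so both determinants converge (after the $o(1)$ remainders are absorbed) to $\det \mathbb{F}_3^{*\alpha}[\varphi]$ and $\det \mathbb{F}^*$, yielding the bound $|C^\alpha|\lesssim |\det \mathbb{F}_3^{*\alpha}[\varphi]|/\det\mathbb{F}^*$. In the regime $d-1\leq m\leq d$, only the translation rows/columns $\alpha=1,\dots,d$ of $\mathbb{F}^\varepsilon$ are enhanced by the factor $\mathcal{L}_d^\alpha \rho_0(d,m;\varepsilon)/\varepsilon$; factoring this out of the appropriate rows of both the numerator and denominator and letting $\varepsilon\to 0$ in the surviving rotational block reduces to a rescaled Cramer formula with $\det \mathbb{D}^*$ in the denominator and $\det \mathbb{F}_1^{*\alpha}[\varphi]$ in the numerator, times the $\mathcal{L}_d^\alpha$ and $\kappa_i^{(d-1)/m}$ factors shown in the statement. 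Case (\textbf{E1}) with $m\geq d+k$ is treated identically after also extracting the $\rho_k(d,m;\varepsilon)$-scaling of $Q_\alpha^\varepsilon[\varphi]$ from the $k$-growth estimate in Theorem~A.

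Combining the Cramer estimate on $C^\alpha$ with the leading-order estimate $|\partial_{x_d}\bar u_\alpha(0',x_d)|\asymp 1/\varepsilon$ proves the upper bounds. For the lower bounds I would argue as in \cite{BJL2017,LZ2020}: evaluate $\nabla u$ on $\{x'=0'\}\cap \Omega$, show that the errors $\nabla(u-\sum C^\alpha \bar u_\alpha - \bar u_0)$ are genuinely of lower order, and then invoke a case-by-case symmetry/sign calculation under (\textbf{E1})--(\textbf{E3}) to exhibit a concrete index $\alpha_0\in\{1,\dots,d\}$ with $\det \mathbb{F}_1^{*\alpha_0}[\varphi]\neq 0$ (or $\det \mathbb{F}_3^{*\alpha_0}[\varphi]\neq 0$). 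The main obstacle will be the last point: tracking the Lamé/curvature constants through every step without over- or under-counting, and in particular certifying the non-degeneracy of the candidate determinant $\det \mathbb{F}_1^{*\alpha_0}[\varphi]$ using only the parity structure of (\textbf{E1})--(\textbf{E3}) — this is where the careful classification of the boundary data into the three parity classes pays off, and where the freedom to choose $\alpha_0$ among $1,\dots,d$ is essential.
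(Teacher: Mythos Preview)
Your plan matches the paper's proof almost exactly: the same decomposition $u=\sum C^\alpha v_\alpha+v_0$, the same use of Theorem~\ref{thm8698} (via its corollary) to replace $\nabla v_\alpha$ by $\nabla\bar u_\alpha$ up to lower order, the same linear system $\sum_\beta a_{\alpha\beta}C^\beta=Q_\alpha[\varphi]$, and the same Cramer's-rule analysis splitting into the regimes $m<d-1$, $d-1\le m\le d$, and (under {\bf E1}) $m\ge d+k$. The paper's Lemmas~\ref{KM323} and~\ref{lemmabc} supply exactly the asymptotics you describe for $Q_\alpha$ and $a_{\alpha\beta}$, and the lower bound is obtained, as you outline, by isolating the $\alpha_0$-component of $\partial_{x_d}u$ at $x'=0'$ and using that $\partial_{x_d}\bar u_\alpha^{\alpha_0}(0',\cdot)$ is $O(1)$ for $\alpha\neq\alpha_0$ (since $\psi_\alpha^{\alpha_0}=0$ there) while $\partial_{x_d}\bar u_{\alpha_0}^{\alpha_0}=1/\varepsilon$.

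One point of misreading, however: the non-degeneracy $\det\mathbb{F}_1^{\ast\alpha_0}[\varphi]\neq0$ (resp.\ $\det\mathbb{F}_3^{\ast\alpha_0}[\varphi]\neq0$) is a \emph{hypothesis} in the theorem, not a conclusion to be certified from the parity structure of ({\bf E1})--({\bf E3}). The paper makes this explicit in the remarks following the theorem (noting in particular that verifying such conditions for general $\varphi$ is left as a problem for numerics). So the ``main obstacle'' you flag at the end is not actually part of the task; you simply assume such an $\alpha_0$ exists and run the lower-bound argument with it. With that correction, your proposal is the paper's argument.
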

\begin{remark}
Our results in Theorems \ref{Lthm066} and \ref{thma002} not only answer the optimality of the blow-up rate of the gradient in all dimensions, but also improve the results in Theorems 1.10 and 6.1 of \cite{LZ2020} by accurately capturing the blow-up factor matrices and revealing the explicit dependence on the Lam\'{e} constants $\mathcal{L}_{d}^{\alpha}$ and the curvature parameters $\kappa_{1}$ and $\kappa_{2}$.
\end{remark}
\begin{remark}
The assumed condition $\det\mathbb{F}_{1}^{\ast\alpha_{0}}\neq0$ or $\det\mathbb{F}_{3}^{\ast\alpha_{0}}[\varphi]\neq0$ implies that $\varphi\not\equiv0$ on $\partial D$. Otherwise, if $\varphi=0$ on $\partial D$, then it follows from integration by parts that
$Q_{\alpha_{0}}^{\ast}[\varphi]=\int_{\partial D}\frac{\partial u_{\alpha_{0}}^{\ast}}{\partial\nu_{0}}\big|_{+}\cdot\varphi=0.$ This yields that $\det\mathbb{F}_{1}^{\ast\alpha_{0}}[\varphi]=0$ and $\det\mathbb{F}_{3}^{\ast\alpha_{0}}[\varphi]=0$, which provides a contradiction. Although it is difficult to demonstrate the assumed condition $\det\mathbb{F}_{1}^{\ast\alpha_{0}}[\varphi]\neq0$ or $\det\mathbb{F}_{3}^{\ast\alpha_{0}}[\varphi]\neq0$ for any given boundary data $\varphi$, it is an interesting problem to analyze these blow-up factor matrices by numerical computations and simulations.
\end{remark}
\begin{remark}
In Theorem \ref{Lthm066}, for the purpose of constructing the lower bound on the gradient in the case of $m<d-1$, the blow-up factor matrix $\det\mathbb{F}_{3}^{\ast\alpha_{0}}[\varphi]$, as a whole, is required to be non-zero, which means that there exists at least one non-zero element in the column vector $\big(Q_{1}^{\ast}[\varphi],Q_{2}^{\ast}[\varphi],...,Q_{\frac{d(d+1)}{2}}^{\ast}[\varphi]\big)^{T}$. This weakens the assumed condition ($\boldsymbol{\Phi5}$) of Theorem 1.10 in \cite{LZ2020} that for $m<d-1$, there exists some integer $1\leq k_{0}\leq d$ such that $Q_{k_{0}}^{\ast}[\varphi]\neq0$ and $Q_{\beta}^{\ast}[\varphi]=0$ for all $\beta\neq k_{0}$.
\end{remark}

The next theorem aims to establish the optimal gradient estimates on the cylinder surface $\{|x'|=\sqrt[m]{\varepsilon}\}\cap\Omega$. Similarly as before, we first introduce some blow-up factor matrices. For $\alpha=d+1,...,\frac{d(d+1)}{2}$, after replacing the elements of $\alpha$-th column in the matrix $\mathbb{D}^{\ast}$ by column vector $\big(Q_{d+1}^{\ast}[\varphi],...,Q_{\frac{d(d+1)}{2}}^{\ast}[\varphi]\big)^{T}$, we obtain the new matrix $\mathbb{F}_{2}^{\ast\alpha}[\varphi]$ as follows:
\begin{gather}\label{HNMT001}
\mathbb{F}_{2}^{\ast\alpha}[\varphi]=:
\begin{pmatrix}
a_{d+1\,d+1}^{\ast}&\cdots&Q_{d+1}^{\ast}[\varphi]&\cdots&a_{d+1\,\frac{d(d+1)}{2}}^{\ast} \\\\ \vdots&\ddots&\vdots&\ddots&\vdots\\\\a_{\frac{d(d+1)}{2}\,d+1}^{\ast}&\cdots&Q_{\frac{d(d+1)}{2}}^{\ast}[\varphi]&\cdots&a_{\frac{d(d+1)}{2}\frac{d(d+1)}{2}}^{\ast}
\end{pmatrix}.
\end{gather}

Then our second main theorem is stated as follows:
\begin{theorem}\label{thma002}
Assume that $D_{1}\subset D\subseteq\mathbb{R}^{d}\,(d\geq2)$ are defined as above, conditions $\mathrm{(}${\bf{H1}}$\mathrm{)}$--$\mathrm{(}${\bf{H3}}$\mathrm{)}$ hold, and $\varphi\in C^{2}(\partial D;\mathbb{R}^{d})$. Let $u\in H^{1}(D;\mathbb{R}^{d})\cap C^{1}(\overline{\Omega};\mathbb{R}^{d})$ be the solution of (\ref{La.002}). Then for a sufficiently small $\varepsilon>0$, $x\in\{x'=(\sqrt[m]{\varepsilon},0,...,0)'\}\cap\Omega$,

$(\rm{i})$ if condition $\mathrm{(}${\bf{E1}}$\mathrm{)}$, $\mathrm{(}${\bf{E2}}$\mathrm{)}$ or $\mathrm{(}${\bf{E3}}$\mathrm{)}$ holds for $d<m<d+k,\,k>1$, then
\begin{align*}
|\nabla u|\lesssim&
\begin{cases}
\max\limits_{d+1\leq\alpha\leq\frac{d(d+1)}{2}}|\mathcal{L}_{d}^{\alpha}|^{-1}|Q_{\alpha}^{\ast}[\varphi]|\frac{\kappa_{2}^{\frac{d+1}{m}}}{1+\kappa_{1}}\frac{1}{\varepsilon^{1-1/m}\rho_{2}(d,m;\varepsilon)},&d+1\leq m<d+k,\\
\frac{\max\limits_{d+1\leq\alpha\leq\frac{d(d+1)}{2}}|\det\mathbb{F}_{2}^{\ast\alpha}[\varphi]|}{(1+\kappa_{1})\det\mathbb{D}^{\ast}}\frac{1}{\varepsilon^{1-1/m}},&d<m<d+1,
\end{cases}
\end{align*}
and, under the condition of $Q_{d+1}^{\ast}[\varphi]\neq0$ and $\mathbb{F}_{2}^{\ast d+1}[\varphi]\neq0$,
\begin{align*}
|\nabla u|\gtrsim&
\begin{cases}
\frac{\kappa_{2}^{\frac{d+1}{m}}|Q_{d+1}^{\ast}[\varphi]|}{(1+\kappa_{2})|\mathcal{L}_{d}^{d+1}|}\frac{1}{\varepsilon^{1-1/m}\rho_{2}(d,m;\varepsilon)},&d+1\leq m<d+k,\\
\frac{|\det\mathbb{F}_{2}^{\ast d+1}[\varphi]|}{(1+\kappa_{2})\det\mathbb{D}^{\ast}}\frac{1}{\varepsilon^{1-1/m}},&d<m<d+1;
\end{cases}
\end{align*}

$(\rm{ii})$ if condition $\mathrm{(}${\bf{E2}}$\mathrm{)}$ holds, then for $m>d+k,\,k\geq1,\,k\neq2$ or $m=d+k,\,k=1$,
\begin{align*}
\frac{\eta(\kappa_{1}^{\frac{d+1}{m}}\kappa_{2}^{-\frac{d+k}{m}}+1)}{1+\kappa_{2}}\frac{1}{\varepsilon^{1-k/m}}\lesssim|\nabla u|\lesssim&
\frac{\eta(\kappa_{2}^{\frac{d+1}{m}}\kappa_{1}^{-\frac{d+k}{m}}+1)}{1+\kappa_{1}}\frac{1}{\varepsilon^{1-k/m}},
\end{align*}
and, for $m=d+k,\,k>2$,
\begin{align*}
\frac{\eta\kappa_{1}^{\frac{d+1}{m}}}{(1+\kappa_{2})\kappa_{2}^{\frac{d+k}{m}}}\frac{\rho_{k+1}(d,m;\varepsilon)}{\varepsilon^{1-1/m}\rho_{2}(d,m;\varepsilon)}\lesssim|\nabla u|\lesssim\frac{\eta\kappa_{2}^{\frac{d+1}{m}}}{(1+\kappa_{1})\kappa_{1}^{\frac{d+k}{m}}}\frac{\rho_{k+1}(d,m;\varepsilon)}{\varepsilon^{1-1/m}\rho_{2}(d,m;\varepsilon)};
\end{align*}

$(\rm{iii})$ if condition $\mathrm{(}${\bf{E3}}$\mathrm{)}$ holds, then for $m>d+k,\,k\geq1,\,k\neq2$ or $m=d+k$, $k=1$,
\begin{align}\label{ANBM}
\frac{\eta}{1+\kappa_{2}}\frac{1}{\varepsilon^{1-k/m}}\lesssim|\nabla u|\lesssim\frac{\eta}{1+\kappa_{1}}\frac{1}{\varepsilon^{1-k/m}},
\end{align}
and, for $m=d+k,\,k>2$,
\begin{align*}
|\nabla u|\lesssim \frac{\max\limits_{d+1\leq\alpha\leq\frac{d(d+1)}{2}}\kappa_{2}^{\frac{d+1}{m}}|\mathcal{L}_{d}^{\alpha}|^{-1}|Q^{\ast}_{\alpha}[\varphi]|+\eta}{1+\kappa_{1}}\frac{1}{\varepsilon^{1-k/m}},
\end{align*}
and, under the condition of $Q^{\ast}_{d+1}[\varphi]\neq0$,
\begin{align*}
|\nabla u|\gtrsim&\frac{1}{\varepsilon^{1-k/m}}
\frac{\kappa_{1}^{\frac{d+1}{m}}|Q^{\ast}_{d+1}[\varphi]|}{(1+\kappa_{2})\mathcal{L}_{d}^{d+1}},
\end{align*}
where $\kappa_{i}$, $i=1,2$ are defined in condition $\mathrm{(}${\bf{H1}}$\mathrm{)}$, the blow-up factors $Q_{\alpha}^{\ast}[\varphi]$ and the Lam\'{e} constants $\mathcal{L}_{d}^{\alpha}$, $\alpha=d+1,...,\frac{d(d+1)}{2}$ are, respectively, defined by \eqref{FNCL001} and \eqref{AZ}--\eqref{AZ110}, $\rho_{i}(d,m;\varepsilon)$, $i=2,k+1$ are defined in \eqref{rate}, the blow-up factor matrices $\mathbb{D}^{\ast}$ and $\mathbb{F}_{1}^{\ast\alpha}[\varphi]$, $\alpha=d+1,...,\frac{d(d+1)}{2}$ are defined by \eqref{MTABZR001} and \eqref{HNMT001}, respectively.
\end{theorem}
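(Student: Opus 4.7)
\textbf{Proof proposal for Theorem \ref{thma002}.}
The plan is to follow the standard linear-decomposition scheme for the Lamé system, but with careful tracking of the dependence on $\kappa_1,\kappa_2$ and on the Lamé constants $\mathcal{L}_d^\alpha$. First I would write the solution on $\Omega$ as
\begin{align*}
u = u_0 + \sum_{\alpha=1}^{d(d+1)/2} C^\alpha\, u_\alpha,
\end{align*}
where $u_0$ solves $\mathcal{L}_{\lambda,\mu}u_0=0$ in $\Omega$ with $u_0=\varphi$ on $\partial D$ and $u_0=0$ on $\partial D_1$, and $u_\alpha$ solves the same system with $u_\alpha=\psi_\alpha$ on $\partial D_1$ and $u_\alpha=0$ on $\partial D$. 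The third equation in \eqref{La.002} then forces the vector $(C^\alpha)$ to satisfy the linear system $\sum_\alpha C^\alpha a_{\alpha\beta}=-Q_\beta[\varphi]$, where $a_{\alpha\beta}=\int_\Omega(\mathbb{C}^0 e(u_\alpha),e(u_\beta))\,dx$ and $Q_\beta[\varphi]=\int_{\partial D_1}\partial_{\nu_0}u_0|_+\!\cdot\psi_\beta$.

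Next I would replace each $u_\alpha$ by the explicit auxiliary functions $\bar{u}_\alpha$ in \eqref{CAN01}--\eqref{OPAKLN01}, which already encode the leading singular profile inside $\Omega_{2R}$, and control the corrections $w_\alpha:=u_\alpha-\bar{u}_\alpha$ by energy estimates (building on the leading-term result for the general boundary value problem \eqref{P2.008} announced as Theorem \ref{thm8698}). From this one obtains sharp pointwise bounds on $|\nabla u_\alpha|$ in the thin gap, with the leading behavior governed by $\partial_d\bar{v}\sim 1/\delta(x')$ and by the shape factors in $\mathcal{F}_\alpha$; the crucial quantitative feature is that for the rotational indices $\alpha=d+1,\ldots,\frac{d(d+1)}{2}$ the linear factors in $\psi_\alpha$ make $|\nabla\bar{u}_\alpha|$ pick up an extra power of $|x'|$, which is precisely what enhances its size on the cylinder surface $\{|x'|=\sqrt[m]{\varepsilon}\}$. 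Parallel perturbation arguments (cf.\ the proof plan of Theorem \ref{Lthm066}) give $a_{\alpha\beta}=a_{\alpha\beta}^\ast+o(\text{leading})$ and $Q_\beta[\varphi]=Q_\beta^\ast[\varphi]+o(\text{leading})$, with the leading sizes of $a_{\alpha\beta}^\ast$, $\alpha,\beta\geq d+1$, finite and proportional to $\mathcal{L}_d^\alpha$ times geometric constants between $\kappa_1^{\bullet}$ and $\kappa_2^{\bullet}$.

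With the matrix analysis in place I would invert the linear system by Cramer's rule, which is the mechanism that introduces the blow-up factor matrices $\mathbb{D}^\ast$ and $\mathbb{F}_2^{\ast\alpha}[\varphi]$ in the statement: because on the cylinder surface the dominant $\nabla\bar{u}_\alpha$ are those with $\alpha\geq d+1$, the effective system reduces (to leading order) to the $\mathbb{D}^\ast$-block, giving $C^{d+1},\ldots,C^{d(d+1)/2}$ a representation of the form $\det\mathbb{F}_2^{\ast\alpha}[\varphi]/\det\mathbb{D}^\ast$ (plus, in the subregimes where single $Q_\alpha^\ast[\varphi]$ dominates, sharper individual bounds $|\mathcal{L}_d^\alpha|^{-1}|Q_\alpha^\ast[\varphi]|$). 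For the upper bound I would substitute these expressions into $\nabla u=\nabla u_0+\sum C^\alpha\nabla u_\alpha$ and bound each term pointwise at $x'=(\sqrt[m]{\varepsilon},0,\ldots,0)'$, using $\delta(x')\asymp (1+\kappa_i)\varepsilon$ and that $\partial_{x_j}\delta\asymp \kappa_i\varepsilon^{(m-1)/m}$. For the lower bound I pick a single dominant index (e.g.\ $\alpha=d+1$) and show that the corresponding leading term in $\nabla\bar{u}_{d+1}$ cannot be cancelled by the remaining contributions, using the non-vanishing hypotheses $Q_{d+1}^\ast[\varphi]\neq0$ or $\det\mathbb{F}_2^{\ast\,d+1}[\varphi]\neq0$. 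The case split in (i)--(iii) and the two $m$-ranges $d<m<d+1$, $d+1\leq m<d+k$ then come from comparing $\rho_2(d,m;\varepsilon)$ with $1$ and from whether a single rotational blow-up factor or the full matrix determinant controls the denominator; cases (ii) and (iii) for $m\geq d+k$ follow by noting that the forcing $\eta|x'|^k$ from $\varphi$ itself dominates once $k/m$ is close to $1$, so the estimate is driven by $u_0$ rather than by the $C^\alpha u_\alpha$.

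The main obstacle I anticipate is the bookkeeping of sharp constants throughout: converting $\kappa_1\leq (h_1-h)/|x'|^m\leq\kappa_2$ into two-sided bounds on $\delta$, on $\partial_{x_j}\delta$, on the energy integrals $\int_{\Omega_R}(\mathbb{C}^0 e(\bar u_\alpha),e(\bar u_\beta))$, and finally on the determinants $\det\mathbb{D}^\ast$, $\det\mathbb{F}_2^{\ast\alpha}[\varphi]$, so that the explicit powers $\kappa_1^{(d+1)/m}$, $\kappa_2^{(d+k)/m}$ and the factors $(1+\kappa_1)$, $(1+\kappa_2)$ in the statement emerge with the correct orientation. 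The second delicate point is the sharp lower bound: I need to verify that the non-leading terms (the $C^\alpha\nabla w_\alpha$ remainders, the translational block contributions $\alpha\leq d$, and $\nabla u_0$ on the cylinder surface) are strictly smaller in order than the isolated leading contribution associated with $Q_{d+1}^\ast[\varphi]$ or $\det\mathbb{F}_2^{\ast\,d+1}[\varphi]$, which is where the parity assumptions (\textbf{E1})--(\textbf{E3}) are used to kill the potentially conflicting modes via oddness/evenness of $\varphi$.
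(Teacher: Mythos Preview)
Your proposal is correct and follows essentially the same approach as the paper: the decomposition $u=u_0+\sum_\alpha C^\alpha u_\alpha$, the replacement of $u_\alpha$ by the explicit auxiliary functions $\bar u_\alpha$ via Theorem~\ref{thm8698}, the asymptotics of $a_{\alpha\beta}$ and $Q_\beta[\varphi]$ (the paper's Lemmas~\ref{KM323} and~\ref{lemmabc}), Cramer's rule to extract $C^\alpha$ in terms of $\mathbb{D}^\ast$ and $\mathbb{F}_2^{\ast\alpha}[\varphi]$, and the lower bound by isolating the component $C^{d+1}\partial_{x_d}\bar u_{d+1}^{\,d}$ on the cylinder surface are exactly what the paper does in Steps~1--3 and Step~5 of Section~\ref{SL004}. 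One small sign slip: the linear system is $\sum_\alpha C^\alpha a_{\alpha\beta}=Q_\beta[\varphi]$ (no minus), since the paper defines $a_{\alpha\beta}=-\int_{\partial D_1}\partial_{\nu_0}u_\alpha|_+\cdot\psi_\beta$.
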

\begin{remark}
The optimal gradient estimates in Theorem \ref{thma002} address the remaining optimality of the blow-up rate in Remark 6.5 of \cite{LZ2020}, which makes complete the optimal lower bounds of the gradient on the cylinder surface $\{|x'|=\sqrt[m]{\varepsilon}\}\cap\Omega$. In addition, it is worth mentioning that as shown in \eqref{ANBM}, if condition ({\bf{E3}}) holds, the leading singularity of $|\nabla u|$ only arises from $|\nabla u_{0}|$ in the case of $m>d+k,\,k\geq1,\,k\neq2$ or $m=d+k$, $k=1$. This is different from the blow-up phenomenon under condition ({\bf{E1}}) or ({\bf{E2}}).
\end{remark}

By applying the proofs of Theorems \ref{Lthm066} and \ref{thma002} with a slight modification, we obtain the following two corollaries. Before stating the first corollary, we first introduce two notations $\mathcal{H}_{A}^{\ast}(m,d,k;\varphi)$ and $\mathcal{H}_{B}^{\ast}(m,d,k;\varphi)$. To be specific, define
\begin{align}\label{MRA001}
&\mathcal{H}_{A}^{\ast}(m,d,k;\varphi):=\notag\\
&\begin{cases}
\eta\kappa_{2}^{\frac{d-1}{m}}\kappa_{1}^{-\frac{d+k-1}{m}},&m\geq d+k-1,\;\text{for\;case\;({\bf{A1}})},\\
\max\limits_{1\leq\alpha\leq d}\kappa_{2}^{\frac{d-1}{m}}(\mathcal{L}_{d}^{\alpha})^{-1}|Q_{\alpha}^{\ast}[\varphi]|,&m\geq d+k-1,\;\text{for\;case\;({\bf{A2}})\;or\;({\bf{A3}})},\\
\max\limits_{1\leq\alpha\leq d}\kappa_{2}^{\frac{d-1}{m}}(\mathcal{L}_{d}^{\alpha})^{-1}|Q_{\alpha}^{\ast}[\varphi]|,& d+1\leq m<d+k-1,\\
\frac{\max\limits_{1\leq\alpha\leq d}\kappa_{2}^{\frac{d-1}{m}}(\mathcal{L}_{d}^{\alpha})^{-1}|\det\mathbb{F}_{1}^{\ast\alpha}[\varphi]|}{\det\mathbb{D}^{\ast}},&d-1\leq m<d+1,\\
\frac{\max\limits_{1\leq\alpha\leq d}|\det\mathbb{F}_{3}^{\ast\alpha}[\varphi]|}{\det\mathbb{F}^{\ast}},& m<d-1,
\end{cases}
\end{align}
and
\begin{align}\label{MRA002}
&\mathcal{H}_{B}^{\ast}(m,d,k;\varphi):=\notag\\
&\begin{cases}
\eta\kappa_{2}^{\frac{d+1}{m}}\kappa_{1}^{-\frac{d+k}{m}},&m\geq d+k,\;\text{for\;case\;({\bf{A2}})},\\
\max\limits_{d+1\leq\alpha\leq\frac{d(d+1)}{2}}\kappa_{2}^{\frac{d+1}{m}}(\mathcal{L}_{d}^{\alpha})^{-1}|Q_{\alpha}^{\ast}[\varphi]|,&m\geq d+k,\;\text{for\;case\;({\bf{A1}})\;or\;({\bf{A3}})},\\
\max\limits_{d+1\leq\alpha\leq\frac{d(d+1)}{2}}\kappa_{2}^{\frac{d+1}{m}}(\mathcal{L}_{d}^{\alpha})^{-1}|Q_{\alpha}^{\ast}[\varphi]|,& d+1\leq m<d+k,\\
\frac{\max\limits_{d+1\leq\alpha\leq\frac{d(d+1)}{2}}|\det\mathbb{F}_{2}^{\ast\alpha}[\varphi]|}{\det\mathbb{D}^{\ast}},&d-1\leq m<d+1,\\
\frac{\max\limits_{d+1\leq\alpha\leq\frac{d(d+1)}{2}}|\det\mathbb{F}_{3}^{\ast\alpha}[\varphi]|}{\det\mathbb{F}^{\ast}},& m<d-1.
\end{cases}
\end{align}
Then the first corollary is listed as follows:
\begin{corollary}
Assume that $D_{1}\subset D\subseteq\mathbb{R}^{d}\,(d\geq2)$ are defined as above, conditions $\mathrm{(}${\bf{H1}}$\mathrm{)}$--$\mathrm{(}${\bf{H3}}$\mathrm{)}$ hold. Let $u\in H^{1}(D;\mathbb{R}^{d})\cap C^{1}(\overline{\Omega};\mathbb{R}^{d})$ be the solution of \eqref{La.002}. Assume that $\mathrm{(}${\bf{A1}}$\mathrm{)}$, $\mathrm{(}${\bf{A2}}$\mathrm{)}$ or $\mathrm{(}${\bf{A3}}$\mathrm{)}$ holds. If $\varphi\in C^{2}(\partial D;\mathbb{R}^{d})$ satisfies the $k$-order growth condition \eqref{growth}, then for a sufficiently small $\varepsilon>0$, $x\in\Omega_{R}$,
\begin{align*}
|\nabla u|\lesssim\frac{1}{\varepsilon+\kappa_{1}|x'|^{m}}\left(\mathcal{H}_{A}^{\ast}(m,d,k;\varphi)\rho_{A}(\varepsilon)+\mathcal{H}_{B}^{\ast}(m,d,k;\varphi)\rho_{B}(\varepsilon)|x'|+\eta|x'|^{k}\right),
\end{align*}
where $\rho_{A}(\varepsilon)$ and $\rho_{B}(\varepsilon)$ are, respectively, defined by \eqref{LT001}--\eqref{LT002}, $\mathcal{H}_{A}^{\ast}(m,d,k;\varphi)$ and $\mathcal{H}_{B}^{\ast}(m,d,k;\varphi)$ are defined by \eqref{MRA001}--\eqref{MRA002}, respectively.
\end{corollary}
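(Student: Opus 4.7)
The plan is to adapt the decomposition strategy already used in the proofs of Theorems \ref{Lthm066} and \ref{thma002}, but to keep the estimates pointwise in $x\in\Omega_R$ rather than evaluated on the shortest segment $\{x'=0'\}$ or the cylinder surface $\{|x'|=\sqrt[m]{\varepsilon}\}$. First, decompose the solution as
\begin{align*}
u = u_0 + \sum_{\alpha=1}^{d(d+1)/2} C^{\alpha}\, u_{\alpha},
\end{align*}
where $u_0$ and $u_\alpha$ solve the analogs on $\Omega$ of the problems in \eqref{l03.001}, and use $\bar u_0$, $\bar u_\alpha$ from \eqref{CAN01}--\eqref{OPAKLN01} as the leading approximations. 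The pointwise bound on $|\nabla u_\alpha(x)|$ and $|\nabla u_0(x)|$ in $\Omega_R$ with explicit dependence on $\kappa_1,\kappa_2,\mathcal L_d^\alpha$ is obtained by energy estimates on dyadic cylinders $\Omega_t(z')\setminus\Omega_{t/2}(z')$ together with the iteration scheme of \cite{LLBY2014,BLL2015}; these are exactly the ingredients furnished by Theorem \ref{thm8698} of Section \ref{SL003}, so the terms $\eta|x'|^k/(\varepsilon+\kappa_1|x'|^m)$ (from the $k$-order growth of $\varphi$ in $u_0$) and the prefactors $\kappa_j^{\bullet/m}$ and $(\mathcal L_d^\alpha)^{-1}$ drop out directly from those bounds.

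Next, determine the constants $C^\alpha$ from the flux conditions in the third line of \eqref{La.002}, which is a linear system of the form $\mathbb{F}\,\vec C=-\vec Q[\varphi]$, with $\mathbb{F}$ converging to $\mathbb{F}^\ast$ and $\vec Q[\varphi]$ to $(Q_1^\ast[\varphi],\ldots,Q_{d(d+1)/2}^\ast[\varphi])^T$ as $\varepsilon\to 0$, as already shown in the argument of Theorems \ref{Lthm066} and \ref{thma002}. Apply Cramer's rule to obtain $C^\alpha=\det\mathbb F_3^{\ast\alpha}[\varphi]/\det\mathbb F^\ast$ up to lower-order corrections; then decide which block of the linear system is dominant according to the size of $m$ relative to $d-1,\,d,\,d+k$. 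In the regime $d-1\le m<d+1$, the translational block separates from the rotational block at the leading order, so the translations are recovered via $\det\mathbb F_1^{\ast\alpha}[\varphi]/\det\mathbb D^\ast$ and the rotations via $\det\mathbb F_2^{\ast\alpha}[\varphi]/\det\mathbb D^\ast$; for $m\ge d+1$, individual blow-up factors $Q_\alpha^\ast[\varphi]$ alone govern the leading term, while for $m<d-1$ the full matrix $\mathbb F^\ast$ must be inverted. This is precisely the mechanism that produces the five cases in the definitions \eqref{MRA001} and \eqref{MRA002} of $\mathcal H_A^\ast$ and $\mathcal H_B^\ast$.

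Finally, combine everything: the gradient of the translational part ($\alpha=1,\ldots,d$) is dominated by $|C^\alpha|\,|\nabla u_\alpha|\lesssim \mathcal H_A^\ast(m,d,k;\varphi)\,\rho_A(\varepsilon)/(\varepsilon+\kappa_1|x'|^m)$, the rotational part ($\alpha=d+1,\ldots,d(d+1)/2$) contributes the factor $|x'|$ via the explicit form of $\psi_\alpha$ and yields $\mathcal H_B^\ast(m,d,k;\varphi)\,\rho_B(\varepsilon)|x'|/(\varepsilon+\kappa_1|x'|^m)$, and $|\nabla u_0|$ produces the remaining $\eta|x'|^k/(\varepsilon+\kappa_1|x'|^m)$. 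Summing these gives the asserted bound.

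The main obstacle is bookkeeping: tracking the precise dependence on $\kappa_1,\kappa_2$ and on $\mathcal L_d^\alpha$ through each step of the Campanato-type iteration, verifying that the linear system $\mathbb{F}\vec C=-\vec Q[\varphi]$ inherits the correct block-dominance structure for each range of $m$ (so that $\det\mathbb F^\ast$ can be safely replaced by $\det\mathbb D^\ast$ or by the appropriate $Q_\alpha^\ast[\varphi]$), and checking that the remainder terms in the asymptotics of $\mathbb F$ and $\vec Q[\varphi]$ are absorbed into the stated bound uniformly in $x\in\Omega_R$ rather than only at the two distinguished cross-sections treated in Theorems \ref{Lthm066} and \ref{thma002}.
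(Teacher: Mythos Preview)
Your proposal is correct and follows essentially the same approach as the paper, which does not give a separate proof but simply states that the corollary follows ``by applying the proofs of Theorems \ref{Lthm066} and \ref{thma002} with a slight modification.'' One minor correction: the linear system is $\mathbb{F}\vec C = +\vec Q[\varphi]$ (see \eqref{Le3.078} and \eqref{MWNA001}), not $-\vec Q[\varphi]$, though this sign does not affect the upper bound argument.
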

\begin{remark}
In contrast to \eqref{DALN666}, we improve its result by making clear the dependence on the blow-up factor matrices, the Lam\'{e} constants $\mathcal{L}_{d}^{\alpha}$ and the curvature parameters $\kappa_{1}$ and $\kappa_{2}$ as shown in $\mathcal{H}_{A}^{\ast}(m,d,k;\varphi)$ and $\mathcal{H}_{B}^{\ast}(m,d,k;\varphi)$.
\end{remark}

Our results can also be extended to the inclusions with partially flat boundaries as follows:
\begin{enumerate}
{\it\item[(\bf{S1})]
$\kappa_{1}\mathrm{dist}^{m}(x',\Sigma')\leq h_{1}(x')-h(x')\leq\kappa_{2}\mathrm{dist}^{m}(x',\Sigma'),\;\mbox{if}\;\,x'\in B_{2R}'\setminus\Sigma',$
\item[(\bf{S2})]
$|\nabla_{x'}^{j}h_{1}(x')|,\,|\nabla_{x'}^{j}h(x')|\leq \kappa_{3}\mathrm{dist}^{m-j}(x',\Sigma'),\;\mbox{if}\;\,x'\in B_{2R}',\;j=1,2.$}
\end{enumerate}
For the convenience of notation, let $a\approx b$ represent that both $a\lesssim b$ and $a\gtrsim b$ hold. For $i=0,2,k,k+1$ and $j=1,2$, denote
\begin{align}\label{FNT001}
\mathcal{G}_{ij}^{\varepsilon}(|\Sigma'|):=|\Sigma'|^{\frac{d+i-1}{d-1}}+|\Sigma'|^{\frac{d-2+i}{d-1}}\kappa_{j}^{-\frac{1}{m}}\varepsilon^{\frac{1}{m}}+\kappa_{j}^{-\frac{d-1+i}{m}}\varepsilon\rho_{i}(d,m;\varepsilon).
\end{align}
With this notation, we state the second corollary as follows:
\begin{corollary}\label{coroLL02}
Assume that $D_{1}\subset D\subseteq\mathbb{R}^{d}\,(d\geq2)$ are defined as above, conditions $\mathrm{(}${\bf{S1}}$\mathrm{)}$--$\mathrm{(}${\bf{S2}}$\mathrm{)}$ and $\mathrm{(}${\bf{H3}}$\mathrm{)}$ hold with $\Sigma'=B_{r}'(0')$ for some $0<r<R$. Let $u\in H^{1}(D;\mathbb{R}^{d})\cap C^{1}(\overline{\Omega};\mathbb{R}^{d})$ be the solution of \eqref{La.002}. Then for a sufficiently small $\varepsilon>0$, $x\in\{x'=(r,0,...,0)'\}\cap\Omega$,

$(\rm{i})$ if condition $\mathrm{(}${\bf{E1}}$\mathrm{)}$ holds,
\begin{align*}
\left(\frac{\mathcal{G}_{k2}^{\varepsilon}(|\Sigma'|)}{\mathcal{G}_{01}^{\varepsilon}(|\Sigma'|)}+|\Sigma'|^{\frac{k}{d-1}}\right)\frac{\eta}{\varepsilon}\lesssim|\nabla u|\lesssim\left(\frac{\mathcal{G}_{k1}^{\varepsilon}(|\Sigma'|)}{\mathcal{G}_{02}^{\varepsilon}(|\Sigma'|)}+|\Sigma'|^{\frac{k}{d-1}}\right)\frac{\eta}{\varepsilon};
\end{align*}

$(\rm{ii})$ if condition $\mathrm{(}${\bf{E2}}$\mathrm{)}$ holds,
\begin{align*}
\left(\frac{|\Sigma'|^{\frac{1}{d-1}}\mathcal{G}_{k+1\,2}^{\varepsilon}(|\Sigma'|)}{\mathcal{G}_{21}^{\varepsilon}(|\Sigma'|)}+|\Sigma'|^{\frac{k}{d-1}}\right)\frac{\eta}{\varepsilon}\lesssim|\nabla u|\lesssim\left(\frac{|\Sigma'|^{\frac{1}{d-1}}\mathcal{G}_{k+1\,1}^{\varepsilon}(|\Sigma'|)}{\mathcal{G}_{22}^{\varepsilon}(|\Sigma'|)}+|\Sigma'|^{\frac{k}{d-1}}\right)\frac{\eta}{\varepsilon};
\end{align*}

$(\rm{iii})$ if condition $\mathrm{(}${\bf{E3}}$\mathrm{)}$ holds,
\begin{align*}
|\nabla u|\approx\frac{\eta|\Sigma'|^{\frac{k}{d-1}}}{\varepsilon},
\end{align*}
where $\mathcal{G}_{ij}^{\varepsilon}(|\Sigma'|)$, $i=0,2,k,k+1$, $j=1,2$ are defined by \eqref{FNT001}.

\end{corollary}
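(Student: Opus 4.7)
The plan is to run the proofs of Theorems \ref{Lthm066} and \ref{thma002} in the modified geometry (\textbf{S1})--(\textbf{S2}), evaluated at the transition point $x' = (r, 0, \ldots, 0)'$. The only genuinely new ingredient is the recomputation of the basic singular integrals, because the thickness function
\begin{equation*}
\delta(x') = \varepsilon + h_{1}(x') - h(x') \approx \varepsilon + \kappa_{j}\,\operatorname{dist}(x',\Sigma')^{m}
\end{equation*}
now degenerates along the whole $(d-1)$-dimensional ball $\Sigma' = B'_{r}(0')$ rather than only at a single point. Since every matrix entry appearing in Theorems \ref{Lthm066} and \ref{thma002} (the elements of $\mathbb{D}^{\ast}$, $\mathbb{F}^{\ast}$, the functionals $Q^{\ast}_{\alpha}[\varphi]$, the determinants $\det\mathbb{F}_{j}^{\ast\alpha}[\varphi]$) and every pointwise bound on $\nabla\bar u_{0}$, $\nabla\bar u_{\alpha}$ ultimately reduces to weighted integrals of $\delta(x')^{-s}$ and $|x'|^{\ell}\delta(x')^{-s}$ over $B'_{R}$, replaying those estimates under (\textbf{S1})--(\textbf{S2}) is what produces the new geometric factor $\mathcal{G}_{ij}^{\varepsilon}(|\Sigma'|)$.

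Concretely, I would split $B'_{R} = \Sigma' \cup A_{\varepsilon,j} \cup E_{\varepsilon,j}$, where $\Sigma' = B'_{r}$ is the flat contact region on which $\delta \equiv \varepsilon$, $A_{\varepsilon,j} = \{r < |x'| < r + \kappa_{j}^{-1/m}\varepsilon^{1/m}\}$ is the boundary-layer annulus on which $\delta \approx \varepsilon$, and $E_{\varepsilon,j} = \{r + \kappa_{j}^{-1/m}\varepsilon^{1/m} < |x'| < R\}$ is the far region on which $\delta \approx \kappa_{j}\operatorname{dist}(x',\Sigma')^{m}$ dominates $\varepsilon$. Summing the three resulting contributions produces exactly the three summands
\begin{equation*}
|\Sigma'|^{\frac{d+i-1}{d-1}} + |\Sigma'|^{\frac{d-2+i}{d-1}}\kappa_{j}^{-\frac{1}{m}}\varepsilon^{\frac{1}{m}} + \kappa_{j}^{-\frac{d-1+i}{m}}\varepsilon\,\rho_{i}(d,m;\varepsilon)
\end{equation*}
of $\mathcal{G}_{ij}^{\varepsilon}(|\Sigma'|)$, after using $|\Sigma'|\approx r^{d-1}$. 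Plugging these estimates into the algebraic formulas for $|\nabla u|$ at $x' = (r,0,\ldots,0)'$, obtained from the decomposition $u = u_{0} + \sum_{\alpha}C^{\alpha}u_{\alpha}$ and the orthogonality relations in \eqref{La.002}, the ratios $\mathcal{G}_{k\,i}^{\varepsilon}/\mathcal{G}_{0\,j}^{\varepsilon}$ and $|\Sigma'|^{1/(d-1)}\mathcal{G}_{k+1\,i}^{\varepsilon}/\mathcal{G}_{2\,j}^{\varepsilon}$ in cases (i)--(ii) emerge as quotients of $\int |x'|^{k}\delta^{-s}$ by $\int \delta^{-s}$ that dictate the sizes of the coefficients $C^{\alpha}$, while the additive $\eta|\Sigma'|^{k/(d-1)}/\varepsilon = \eta r^{k}/\varepsilon$ comes directly from the boundary datum evaluated at the test point. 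Case (iii) is simpler: the parity cancellation already established in Theorem \ref{thma002}(iii) under (\textbf{E3}) kills the leading $C^{\alpha}$-contributions, leaving only this last summand, which explains the equivalence $|\nabla u| \approx \eta|\Sigma'|^{k/(d-1)}/\varepsilon$.

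The main obstacle will be the transition annulus $A_{\varepsilon,j}$, where the three summands of $\mathcal{G}_{ij}^{\varepsilon}$ all become comparable once $r \sim \kappa_{j}^{-1/m}\varepsilon^{1/m}$. One has to verify that no implicit cancellation between upper and lower bounds occurs in this regime, and that swapping the roles of $\kappa_{1}$ and $\kappa_{2}$ between the numerator and denominator of the ratios in (i)--(ii) is forced by the correct monotonicity of the integrals in $\kappa_{j}$. A secondary concern is checking that the remainder terms generated by the iteration scheme of \cite{BLL2015,BJL2017,LZ2020}, originally controlled by $\varepsilon$-dependent weights over a point-touching geometry, remain genuinely lower-order uniformly in $|\Sigma'|$ after substituting the new flat-region weights; I expect the bulk of the technical work to lie in this uniformity check and in the book-keeping of $\kappa_{1}$, $\kappa_{2}$ through the curved part $E_{\varepsilon,j}$ and the boundary-layer $A_{\varepsilon,j}$.
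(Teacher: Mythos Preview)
Your proposal is correct and follows essentially the same route the paper indicates: the paper gives no separate proof of this corollary, stating only that it is obtained ``by applying the proofs of Theorems \ref{Lthm066} and \ref{thma002} with a slight modification,'' and your plan of re-running those arguments while recomputing the basic integrals $\int_{B'_R}|x'|^{i}\delta(x')^{-1}\,dx'$ under the flat-contact geometry (\textbf{S1})--(\textbf{S2}) is exactly that modification. Your three-region decomposition $\Sigma'\cup A_{\varepsilon,j}\cup E_{\varepsilon,j}$ cleanly explains the origin of the three summands in $\mathcal{G}^{\varepsilon}_{ij}(|\Sigma'|)$, your identification of the additive $\eta|\Sigma'|^{k/(d-1)}/\varepsilon=\eta r^{k}/\varepsilon$ with $|\nabla u_{0}|$ at the test point is correct, and your handling of case (iii) via the same parity cancellation used for (\textbf{E3}) in Theorem \ref{thma002} is also what one expects.
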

\begin{remark}
In fact, \eqref{FNT001} can be rewritten as $\mathcal{G}_{ij}^{\varepsilon}(|\Sigma'|)=|\Sigma'|^{\frac{d+i-1}{d-1}}+O(\varepsilon^{\frac{1}{m}})$, which implies that if condition ({\bf{E1}}), ({\bf{E2}}) or ({\bf{E3}}) holds, then for $x\in\{x'=(r,0,...,0)'\}\cap\Omega$, the results in Corollary \ref{coroLL02} have an unified expression as follows:
\begin{align*}
|\nabla u|\approx\frac{\eta|\Sigma'|^{\frac{k}{d-1}}}{\varepsilon}.
\end{align*}
\end{remark}
\begin{remark}
It is worth mentioning that in view of decomposition \eqref{Le2.015} below, we conclude from Corollary \ref{coroLL02} that if condition ({\bf{E1}}) holds, the singular behavior of the gradient is determined by the first part $\sum^{d}_{\alpha=1}C^{\alpha}\nabla u_{\alpha}$ and the third part $\nabla u_{0}$ together; if condition ({\bf{E2}}) holds, the major singularity of the gradient lies in the second part $\sum^{\frac{d(d+1)}{2}}_{\alpha=d+1}C^{\alpha}\nabla u_{\alpha}$ and the third part $\nabla u_{0}$; if condition ({\bf{E3}}) holds, the singular behavior of the gradient is determined only by the third part $\nabla u_{0}$. These fact indicates that when the inclusions with partially flat boundaries are closely located at the matrix boundary, $\nabla u_{0}$ possesses the largest blow-up rate $\varepsilon^{-1}$ under these three cases all the time, which is different from the blow-up phenomenon with $m$-convex inclusions as seen in Theorems \ref{Lthm066} and \ref{thma002}. Moreover, this blow-up phenomena also differs from that in the interior estimates of \cite{HJL2018}, where Hou, Ju and Li proved that there appears no blow-up of the gradient in the presence of two nearby inclusions with partially flat boundaries for any given boundary data.

\end{remark}

\section{Preliminary}\label{SL003}

As shown in \cite{BJL2017,LZ2020}, the solution of \eqref{La.002} can be decomposed as follows:
\begin{align*}
u=\sum^{d}_{\alpha=1}C^{\alpha}u_{\alpha}+\sum^{\frac{d(d+1)}{2}}_{\alpha=d+1}C^{\alpha}u_{\alpha}+u_{0},\quad\;\,\mathrm{in}\;\Omega,
\end{align*}
where the free constants $C^{\alpha},\,\alpha=1,2,...,\frac{d(d+1)}{2},$ will be determined later by utilizing the third line of \eqref{La.002}, and $u_{\alpha}\in C^{1}(\overline{\Omega};\mathbb{R}^{d})\cap C^{2}(\Omega;\mathbb{R}^{d}),\,\alpha=0,1,2,...,\frac{d(d+1)}{2}$, verify
\begin{equation}\label{P2.005}
\begin{cases}
\mathcal{L}_{\lambda,\mu}u_{0}=0,&\mathrm{in}\;\Omega,\\
u_{0}=0,&\mathrm{on}\;\partial D_{1},\\
u_{0}=\varphi,&\mathrm{on}\;\partial D,
\end{cases}\quad
\begin{cases}
\mathcal{L}_{\lambda,\mu}u_{\alpha}=0,\quad\;\,&\mathrm{in}\;\,\Omega,\\
u_{\alpha}=\psi_{\alpha},\quad\;\,&\mathrm{on}\;\,\partial D_{1},\\
u_{\alpha}=0,\quad\;\,&\mathrm{on}\;\,\partial D,
\end{cases}
\end{equation}
respectively. Therefore,
\begin{align}\label{Le2.015}
\nabla u=\sum^{d}_{\alpha=1}C^{\alpha}\nabla u_{\alpha}+\sum^{\frac{d(d+1)}{2}}_{\alpha=d+1}C^{\alpha}\nabla u_{\alpha}+\nabla u_{0},\quad\;\,\mathrm{in}\;\,\Omega.
\end{align}
With this, we transfer the original problem to the estimates of the free constants $C^{\alpha}$, $\alpha=1,2,...,\frac{d(d+1)}{2}$ and the gradients $\nabla u_{\alpha}$, $\alpha=0,1,2,...,\frac{d(d+1)}{2}$. Moreover, decomposition \eqref{Le2.015} splits $\nabla u$ into three parts, that is, $\sum^{d}_{\alpha=1}C^{\alpha}\nabla u_{\alpha}$, $\sum^{\frac{d(d+1)}{2}}_{\alpha=d+1}C^{\alpha}\nabla u_{\alpha}$ and $\nabla u_{0}$. Then it suffices to compare the singularities of these three parts and then identify the largest blow-up rate of them.

For the purpose of studying the singular behavior of $\nabla u_{\alpha}$, $\alpha=0,1,2,...,\frac{d(d+1)}{2}$, we first consider a general boundary value problem as follows:
\begin{equation}\label{P2.008}
\begin{cases}
\mathcal{L}_{\lambda,\mu}v:=\nabla\cdot(\mathbb{C}^{0}e(v))=0,\quad\;\,&\mathrm{in}\;\,\Omega,\\
v=\psi(x),&\mathrm{on}\;\,\partial D_{1},\\
v=\phi(x),&\mathrm{on}\;\,\partial D,
\end{cases}
\end{equation}
where $\psi\in C^{2}(\partial D_{1};\mathbb{R}^{d})$ and $\phi\in C^{2}(\partial D;\mathbb{R}^{d})$ are two given vector-valued functions.

Introduce a vector-valued auxiliary function as follows:
\begin{align}\label{AHM001}
\tilde{v}=&\psi(x',\varepsilon+h_{1}(x'))\bar{v}+\phi(x',h(x'))(1-\bar{v})\notag\\
&+\frac{\lambda+\mu}{\mu}f(\bar{v})(\psi^{d}(x',\varepsilon+h_{1}(x'))-\phi^{d}(x',h(x')))\sum^{d-1}_{i=1}\partial_{x_{i}}\delta\,e_{i}\notag\\
&+\frac{\lambda+\mu}{\lambda+2\mu}f(\bar{v})\sum^{d-1}_{i=1}\partial_{x_{i}}\delta(\psi^{i}(x',\varepsilon+h_{1}(x'))-\phi^{i}(x',h(x')))\,e_{d},
\end{align}
where $\bar{v}$ is defined by \eqref{BATL001}, $\delta$ and $f(\bar{v})$ are defined in \eqref{deta}. For the remaining term, write
\begin{align}\label{remin001}
\mathcal{R}_{\delta}(\psi,\phi)=&|\psi(x',\varepsilon+h_{1}(x'))-\phi(x',h(x'))|\delta^{\frac{m-2}{m}}+\delta\big(\|\psi\|_{C^{2}(\partial D_{1})}+\|\phi\|_{C^{2}(\partial D_{2})})\notag\\
&+|\nabla_{x'}(\psi(x',\varepsilon+h_{1}(x'))-\phi(x',h(x')))|.
\end{align}
\begin{theorem}\label{thm8698}
Assume as above. Let $v\in H^{1}(\Omega;\mathbb{R}^{d})$ be a weak solution of (\ref{P2.008}). Then for a sufficiently small $\varepsilon>0$,
\begin{align*}
\nabla v=\nabla\tilde{v}+O(1)\mathcal{R}_{\delta}(\psi,\phi),\quad\mathrm{in}\;\Omega_{R},
\end{align*}
where $\delta$ is defined in \eqref{deta}, the leading term $\tilde{v}$ and the remaining term $\mathcal{R}_{\delta}(\psi,\phi)$ are defined by \eqref{AHM001}--\eqref{remin001}, respectively.
\end{theorem}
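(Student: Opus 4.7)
The strategy is to introduce the error $w := v - \tilde v$ in $\Omega$ and show $|\nabla w|\lesssim \mathcal R_\delta(\psi,\phi)$ pointwise in $\Omega_R$. Since $f(0)=f(1)=0$, the correction terms in \eqref{AHM001} vanish on $\Gamma_R^\pm$, and one verifies directly that $\tilde v=\psi$ on $\partial D_1\cap\partial\Omega_R$ and $\tilde v=\phi$ on $\partial D\cap\partial\Omega_R$. Extending $\tilde v$ smoothly to $\Omega\setminus\Omega_R$ so that the remaining Dirichlet conditions on $\partial\Omega$ are preserved (which contributes only an $O(\|\psi\|_{C^1}+\|\phi\|_{C^1})$-perturbation away from the thin gap, absorbed into $\mathcal R_\delta$), $w\in H_0^1(\Omega;\mathbb R^d)$ solves
\begin{equation*}
\mathcal L_{\lambda,\mu}w = -\mathcal L_{\lambda,\mu}\tilde v\text{ in }\Omega,\qquad w=0\text{ on }\partial\Omega.
\end{equation*}
The theorem then reduces to (i) bounding $\mathcal L_{\lambda,\mu}\tilde v$ pointwise in $\Omega_R$, and (ii) converting this into a pointwise bound on $\nabla w$.

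For (i), I expand $\mathcal L_{\lambda,\mu}\tilde v = \mu\Delta\tilde v+(\lambda+\mu)\nabla(\nabla\cdot\tilde v)$. Writing $\tilde v=\psi\bar v+\phi(1-\bar v)+\mathcal F$, where $\mathcal F$ denotes the two correction terms in \eqref{AHM001}, and using the identities $\partial_{x_d}\bar v=1/\delta$, $\partial_{x_d}^2\bar v=0$, $\partial_{x_j}\bar v=-(\partial_{x_j}h+\bar v\,\partial_{x_j}\delta)/\delta$ for $j<d$, and $\partial_{x_d}^2 f(\bar v)=1/\delta^2$, the naive interpolation $\psi\bar v+\phi(1-\bar v)$ generates two families of worst-order $\delta^{-2}$ contributions to $\mathcal L_{\lambda,\mu}$: in the $e_i$-component ($i<d$), the term $-(\lambda+\mu)(\psi^d-\phi^d)\partial_{x_i}\delta/\delta^2$ coming from $(\lambda+\mu)\partial_{x_i}\partial_{x_d}u^d$; in the $e_d$-component, the term $-(\lambda+\mu)\sum_{j<d}(\psi^j-\phi^j)\partial_{x_j}\delta/\delta^2$ coming from $(\lambda+\mu)\partial_{x_d}\partial_{x_j}u^j$. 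The first family is exactly cancelled by $\mu\partial_{x_d}^2$ applied to the $e_i$-component of $\mathcal F$, which forces the prefactor $(\lambda+\mu)/\mu$; the second is exactly cancelled by $(\lambda+2\mu)\partial_{x_d}^2$ (the joint contribution from $\mu\Delta$ and $(\lambda+\mu)\partial_{x_d}^2$) applied to the $e_d$-component of $\mathcal F$, which forces the prefactor $(\lambda+\mu)/(\lambda+2\mu)$. A careful accounting of the surviving terms shows that each is compatible with the weighted estimate dictated by $\mathcal R_\delta(\psi,\phi)$.

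For (ii), I apply the Bao--Li--Li iteration technique \cite{BLL2015,LLBY2014} adapted to the present cylinder geometry. For each $z'\in B'_R$, set $\hat\delta:=\delta(z')\approx\varepsilon+|z'|^m$. An energy estimate on $\Omega_{\hat\delta}(z')$, combined with the zero trace of $w$ on the top and bottom boundaries of $\Omega_{\hat\delta}(z')$ and a Korn--Poincar\'e inequality in the narrow direction, yields a bound of the form $\int_{\Omega_{\hat\delta}(z')}|\nabla w|^2\,dx\lesssim\hat\delta^{d-1}\mathcal R_\delta(\psi,\phi)^2$. Rescaling by $y=(x-(z',h(z')))/\hat\delta$ transforms $\Omega_{\hat\delta}(z')$ into a domain of unit size on which the rescaled error solves a uniformly elliptic Lam\'e system with controlled data; standard interior $C^{1,\alpha}$ estimates then deliver $|\nabla w|\lesssim \mathcal R_\delta(\psi,\phi)$ at every point of $\Omega_R$. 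The main obstacle is the algebraic bookkeeping in step (i): one must verify not only that the two $\delta^{-2}$ families are precisely cancelled by the two pieces of $\mathcal F$, but also that every remaining crossterm generated by differentiating $f(\bar v)$, $\partial_{x_i}\delta$, and $\bar v$ fits into the structural bound $\mathcal R_\delta(\psi,\phi)$, which requires systematic use of $|\partial_{x_i}\delta|\lesssim|x'|^{m-1}\lesssim\delta^{(m-1)/m}$. The rescaling and interior-regularity portion is otherwise standard.
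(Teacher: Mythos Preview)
Your approach mirrors the paper's: both bound $\mathcal L_{\lambda,\mu}\tilde v$ via the $\delta^{-2}$ cancellation you describe, then run the iteration of \cite{LLBY2014,BLL2015} to localize the energy and finish by rescaling and elliptic regularity on a unit-size cylinder. Two points to fix. First, the local energy bound should read $\int_{\Omega_{\hat\delta}(z')}|\nabla w|^2\lesssim\hat\delta^{\,d}\,\mathcal R_\delta(\psi,\phi)^2$, not $\hat\delta^{\,d-1}$; with your exponent the rescaling step leaves a spurious factor $\hat\delta^{-1/2}$ in the final pointwise estimate. Second, the iteration does not produce energy from nothing---it only transfers control from scale $s$ to scale $t<s$---so it must be initialized by a global bound $\int_{\Omega_{2R}}|\nabla w|^2\leq C(\|\psi\|_{C^2}^2+\|\phi\|_{C^2}^2)$. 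The paper establishes this separately (Part~1 of the Appendix) via integration by parts on all of $\Omega$, using that $\partial_{x_d}^2\hat v_j=0$ for the uncorrected auxiliary function and handling the lateral boundary by the trace theorem; you have not addressed this step, and a Korn--Poincar\'e inequality on the small cylinder alone does not provide it.
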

For readers' convenience, the proof of Theorem \ref{thm8698} is left in the Appendix.

To end this section, we recall some properties of the tensor $\mathbb{C}^{0}$. For the isotropic elastic material, set
\begin{align*}
\mathbb{C}^{0}:=(C_{ijkl}^{0})=(\lambda\delta_{ij}\delta_{kl}+\mu(\delta_{ik}\delta_{jl}+\delta_{il}\delta_{jk})),\quad \mu>0,\quad d\lambda+2\mu>0,
\end{align*}
whose components $C_{ijkl}^{0}$ possess the symmetry property as follows:
\begin{align}\label{symm}
C_{ijkl}^{0}=C_{klij}^{0}=C_{klji}^{0},\quad i,j,k,l=1,2,...,d.
\end{align}
For every pair of $d\times d$ matrices $A=(A_{ij})$ and $B=(B_{ij})$, we introduce the following notations:
\begin{align*}
(\mathbb{C}^{0}A)_{ij}=\sum_{k,l=1}^{n}C_{ijkl}^{0}A_{kl},\quad\hbox{and}\quad(A,B)\equiv A:B=\sum_{i,j=1}^{n}A_{ij}B_{ij}.
\end{align*}
Obviously,
\begin{align*}
(\mathbb{C}^{0}A,B)=(A, \mathbb{C}^{0}B).
\end{align*}
Using \eqref{symm}, we get that $\mathbb{C}^{0}$ verifies the ellipticity condition: for every $d\times d$ real symmetric matrix $\xi=(\xi_{ij})$,
\begin{align*}
\min\{2\mu, d\lambda+2\mu\}|\xi|^2\leq(\mathbb{C}^{0}\xi, \xi)\leq\max\{2\mu, d\lambda+2\mu\}|\xi|^2,
\end{align*}
where $|\xi|^2=\sum\limits_{ij}\xi_{ij}^2.$ Especially,
\begin{align}\label{Le2.012}
\min\{2\mu, d\lambda+2\mu\}|A+A^T|^2\leq(\mathbb{C}^{0}(A+A^T), (A+A^T)).
\end{align}
Additionally, it is well known that for any open set $O$ and $u, v\in C^2(O;\mathbb{R}^{d})$,
\begin{align}\label{Le2.01222}
\int_O(\mathbb{C}^0e(u), e(v))\,dx=-\int_O\left(\mathcal{L}_{\lambda, \mu}u\right)\cdot v+\int_{\partial O}\frac{\partial u}{\partial \nu_0}\Big|_{+}\cdot v.
\end{align}

\section{Proofs of Theorems \ref{Lthm066} and \ref{thma002}}\label{SL004}

Taking $\psi=\psi_{\alpha}$, $\phi=0$, $\alpha=1,2,...,\frac{d(d+1)}{2}$, or $\psi=0$, $\phi=\varphi$ in Theorem \ref{thm8698}, we have
\begin{corollary}\label{thm86}
Assume as above. Let $u_{\alpha}\in H^{1}(\Omega;\mathbb{R}^{d})$, $\alpha=1,2,...,\frac{d(d+1)}{2}$ be a weak solution of (\ref{P2.005}). Then, for a sufficiently small $\varepsilon>0$, $x\in\Omega_{R}$,
\begin{align}\label{ATCG001}
\nabla u_{\alpha}=&\nabla\bar{u}_{\alpha}+O(1)
\begin{cases}
\|\varphi\|_{C^{2}(\partial D)},&\alpha=0,\\
\delta^{\frac{m-2}{m}},&\alpha=1,2,...,d,\\
1,&\alpha=d+1,...,\frac{d(d+1)}{2},
\end{cases}
\end{align}
and
\begin{align*}
\|\nabla u_{\alpha}\|_{L^{\infty}(\Omega\setminus\Omega_{R})}=O(1)
\begin{cases}
\|\varphi\|_{C^{2}(\partial D)},&\alpha=0,\\
1,&\alpha=1,2,...,\frac{d(d+1)}{2},
\end{cases}
\end{align*}
where $\bar{u}_{\alpha}$, $\alpha=0,1,2,...,\frac{d(d+1)}{2}$, are defined in \eqref{CAN01}--\eqref{OPQ}, $O(1)$ denotes some quantity satisfying $|O(1)|\leq C$ for some $\varepsilon$-independent $C$ .
\end{corollary}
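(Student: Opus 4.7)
The plan is to apply Theorem \ref{thm8698} with the appropriate choice of boundary data $(\psi,\phi)$ in each case, verify that the leading term $\tilde v$ from \eqref{AHM001} coincides with the auxiliary function $\bar u_\alpha$ from \eqref{CAN01}--\eqref{OPAKLN01}, and bound the remainder $\mathcal R_\delta(\psi,\phi)$ from \eqref{remin001}. The $L^\infty$ estimate on $\Omega\setminus\Omega_R$ lies outside the scope of Theorem \ref{thm8698} and will be handled separately by standard elliptic regularity, since on that set $\partial D_1$ and $\partial D$ are separated by a distance bounded below independently of $\varepsilon$.

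For $\alpha=0$, I would take $\psi\equiv 0$ and $\phi=\varphi$ in problem \eqref{P2.008}; then \eqref{AHM001} reduces to $\varphi(x',h(x'))(1-\bar v)$ plus precisely the correction terms carrying $-\varphi^d$ and $-\varphi^i$, matching $\bar u_0$ as defined by \eqref{CAN01} and \eqref{OPQ1}. Since $\psi=0$ and $\delta\leq C$ on $\Omega_R$, the three summands of $\mathcal R_\delta(0,\varphi)$ are each controlled by $C\|\varphi\|_{C^2(\partial D)}$, giving the stated remainder. For $\alpha=1,\dots,d$, take $\psi=\psi_\alpha=e_\alpha$ and $\phi\equiv 0$; then the leading term \eqref{AHM001} becomes $e_\alpha\bar v$ together with the correction involving $+\psi_\alpha^d$ and $+\psi_\alpha^i$, which is exactly $\bar u_\alpha=\psi_\alpha\bar v+\mathcal F_\alpha$ from \eqref{OPQ}--\eqref{OPAKLN01}. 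Because $\psi_\alpha$ is constant, $\nabla_{x'}(\psi_\alpha-\phi)=0$ and $\|\psi_\alpha\|_{C^2}\leq C$, so $\mathcal R_\delta\leq C(\delta^{(m-2)/m}+\delta)$; the elementary estimate $\delta^{2/m}\leq C$ on $\Omega_R$ then yields $\mathcal R_\delta\lesssim\delta^{(m-2)/m}$. Finally, for $\alpha=d+1,\dots,d(d+1)/2$, $\psi_\alpha$ is linear in $x$, hence bounded on $\Omega_R$ together with its first and second derivatives, so all three terms in $\mathcal R_\delta(\psi_\alpha,0)$ are controlled by an absolute constant.

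The $L^\infty$ bound on $\Omega\setminus\Omega_R$ follows from standard interior and boundary Schauder estimates for the Lam\'e system: on this set, $\partial D_1$ and $\partial D$ remain uniformly $C^{2,\alpha}$ and are separated by a positive distance independent of $\varepsilon$, so classical $C^{1,\alpha}$ regularity gives $\|\nabla u_\alpha\|_{L^\infty(\Omega\setminus\Omega_R)}\lesssim 1$ for $\alpha\geq 1$ and $\lesssim\|\varphi\|_{C^2(\partial D)}$ for $\alpha=0$. The only substantive work is bookkeeping: one must check term by term that the sign conventions and the index ranges in \eqref{AHM001} line up with those in \eqref{CAN01}, \eqref{OPQ1}, \eqref{OPQ} and \eqref{OPAKLN01}, after which the three cases of \eqref{ATCG001} follow immediately from Theorem \ref{thm8698}. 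I do not expect a genuine obstacle here, as the corollary is essentially a direct specialization of the general expansion already established.
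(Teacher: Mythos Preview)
Your proposal is correct and follows exactly the same approach as the paper: the paper's proof consists of the single sentence ``Taking $\psi=\psi_{\alpha}$, $\phi=0$, $\alpha=1,2,...,\frac{d(d+1)}{2}$, or $\psi=0$, $\phi=\varphi$ in Theorem \ref{thm8698}, we have [Corollary \ref{thm86}].'' Your write-up supplies the bookkeeping that the paper leaves implicit---matching $\tilde v$ to $\bar u_\alpha$ and bounding $\mathcal R_\delta$ in each of the three cases---and adds the standard elliptic argument for the exterior region, all of which is straightforward and as you describe.
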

%\begin{remark}\label{RK001}
%We here would like to point out that $O(1)$ in Corollary \ref{thm86} denotes some quantity satisfying $|O(1)|\leq C$, where the $\varepsilon$-independent $C$ may depend on the curvature parameters $\kappa_{i}$, $i=1,2$ and the Lam\'{e} constants $\mathcal{L}_{d}^{\alpha}$, $\alpha=1,2,...,\frac{d(d+1)}{2}$. But this cannot lead to that the constant $C$ in the upper and lower bounds on the gradient $|\nabla u|$ depends on these parameters. This is primarily caused by the fact that the maximum singularity of the leading term $\nabla\bar{u}_{\alpha}$ lies in $\partial_{x_{d}}\bar{u}_{\alpha}=\psi_{\alpha}(\varepsilon+h_{1}(x')-h(x'))^{-1}$ and the curvature parameters $\kappa_{1}$ and $\kappa_{2}$ embodied in $\partial_{x_{d}}\bar{u}_{\alpha}$ and the Lam\'{e} constants $\mathcal{L}_{d}^{\alpha}$ can be completely singled out in the calculation for every blow-up factor $Q_{\alpha}[\varphi]$ and $a_{\alpha\beta}$ if they possess the singularities in Lemmas \ref{KM323} and \ref{lemmabc} below, $\alpha,\beta=1,2,...,\frac{d(d+1)}{2}$.
%\end{remark}
Recalling decomposition \eqref{Le2.015}, it remains to provide a precise calculation for the free constants $C^{\alpha}$, $\alpha=1,2,...,\frac{d(d+1)}{2}$ in the following. Observe that by utilizing the third line of \eqref{La.002}, it follows from \eqref{Le2.015} that
\begin{align}\label{Le3.078}
\sum^{\frac{d(d+1)}{2}}_{\alpha=1}C^{\alpha}a_{\alpha\beta}=Q_{\beta}[\varphi],\quad\beta=1,2,...,\frac{d(d+1)}{2},
\end{align}
where, for $\alpha,\beta=1,2,...,\frac{d(d+1)}{2}$,
\begin{align*}
a_{\alpha\beta}:=-\int_{\partial D_{1}}\frac{\partial u_{\alpha}}{\partial\nu_{0}}\Big|_{+}\cdot\psi_{\beta},\quad Q_{\alpha}[\varphi]:=\int_{\partial D_{1}}\frac{\partial u_{0}}{\partial\nu_{0}}\Big|_{+}\cdot\psi_{\alpha}.
\end{align*}
In light of \eqref{Le3.078}, we need to calculate each element $a_{\alpha\beta}$ and every blow-up factor $Q_{\beta}[\varphi]$ for the purpose of estimating the free constants $C^{\alpha}$, $\alpha=1,2,...,\frac{d(d+1)}{2}$.

\subsection{Estimates and asymptotics of $Q_{\alpha}[\varphi]$, $\alpha=1,2,...,\frac{d(d+1)}{2}$.}\label{subsec31}

To begin with, the unit outer normals of $\partial D_{1}$ and $\partial D$ near the origin are, respectively, written by
\begin{align}\label{KHA001}
\nu:=(\nu_{1},\nu_{2},...,\nu_{d})=\bigg(-\frac{\nabla_{x'}h_{1}}{\sqrt{1+|\nabla_{x'}h_{1}|^{2}}},\frac{1}{\sqrt{1+|\nabla_{x'}h_{1}|^{2}}}\bigg),
\end{align}
and
\begin{align}\label{KHA002}
\nu:=(\nu_{1},\nu_{2},...,\nu_{d})=\bigg(\frac{\nabla_{x'}h}{\sqrt{1+|\nabla_{x'}h|^{2}}},\frac{-1}{\sqrt{1+|\nabla_{x'}h|^{2}}}\bigg).
\end{align}

\begin{lemma}\label{KM323}
Assume as above. Then for a sufficiently small $\varepsilon>0$,

$(\rm{i})$ for $\alpha=1,2,...,d$, if condition $\mathrm{(}${\bf{E1}}$\mathrm{)}$ holds,
\begin{align}\label{KAWQ001}
\begin{cases}
\kappa_{2}^{-\frac{d+k-1}{m}}\lesssim\frac{Q_{\alpha}[\varphi]}{\eta\mathcal{L}_{d}^{\alpha}\rho_{k}(d,m;\varepsilon)}\lesssim\kappa_{1}^{-\frac{d+k-1}{m}},&m\geq d+k-1,\\
Q_{\alpha}[\varphi]=Q_{\alpha}^{\ast}[\varphi]+O(1)\varepsilon^{\frac{d+k-1-m}{2(d+k-1)}},&m<d+k-1,
\end{cases}
\end{align}
and, if condition $\mathrm{(}${\bf{E2}}$\mathrm{)}$ or $\mathrm{(}${\bf{E3}}$\mathrm{)}$ holds,
\begin{align}\label{KAWQ002}
Q_{\alpha}[\varphi]=&Q_{\alpha}^{\ast}[\varphi]+O(1)\varepsilon^{\frac{d+k-2}{2(m+d+k-1)}};
\end{align}

$(\rm{ii})$ for $\alpha=d+1$, if condition $\mathrm{(}${\bf{E2}}$\mathrm{)}$ holds,
\begin{align}\label{KAT90}
\begin{cases}
\kappa_{2}^{-\frac{d+k}{m}}\lesssim\frac{Q_{d+1}[\varphi]}{\mathcal{L}_{d}^{d+1}\eta\rho_{k+1}(d,m;\varepsilon)}\lesssim\kappa_{1}^{-\frac{d+k}{m}},&m\geq d+k,\\
Q_{d+1}[\varphi]=Q_{d+1}^{\ast}[\varphi]+O(1)\varepsilon^{\frac{d+k-m}{m(d+k)}},&m<d+k,
\end{cases}
\end{align}
and, if condition $\mathrm{(}${\bf{E1}}$\mathrm{)}$ or $\mathrm{(}${\bf{E3}}$\mathrm{)}$ holds,
\begin{align}\label{AUOP001}
Q_{d+1}[\varphi]=&Q_{d+1}^{\ast}[\varphi]+O(1)\varepsilon^{\frac{d+k-1}{m(m+d+k-1)}};
\end{align}

$(\rm{iii})$ for $\alpha=d+2,...,\frac{d(d+1)}{2},\; d\geq3$, if condition $\mathrm{(}${\bf{E1}}$\mathrm{)}$, $\mathrm{(}${\bf{E2}}$\mathrm{)}$ or $\mathrm{(}${\bf{E3}}$\mathrm{)}$ holds,
\begin{align}\label{TAKL001}
Q_{\alpha}[\varphi]&=Q_{\alpha}^{\ast}[\varphi]+O(1)\varepsilon^{\frac{d+k-1}{m(m+d+k-1)}},
\end{align}
where $Q_{\alpha}^{\ast}[\varphi]$, $\alpha=1,2,...,\frac{d(d+1)}{2}$ are defined in \eqref{FNCL001}.
\end{lemma}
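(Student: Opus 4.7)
The central observation is that Green's identity on $\Omega$, together with the vanishing of $u_0$ on $\partial D_{1}$ and of $u_{\alpha}$ on $\partial D$, yields two equivalent expressions
\[
Q_{\alpha}[\varphi]\;=\;-\!\int_{\Omega}\bigl(\mathbb{C}^{0}e(u_{0}),e(u_{\alpha})\bigr)\,dx\;=\;-\!\int_{\partial D}\frac{\partial u_{\alpha}}{\partial\nu_{0}}\Big|_{+}\!\!\cdot\varphi.
\]
An identical calculation on $\Omega^{\ast}$ produces the analogous representation for $Q_{\alpha}^{\ast}[\varphi]$, so the whole lemma reduces to (a) computing the leading asymptotics of the bilinear form $\int_{\Omega}(\mathbb{C}^{0}e(u_{0}),e(u_{\alpha}))$ in the singular regime and (b) quantifying the convergence of the relevant quantities to their $\varepsilon=0$ counterparts in the subcritical regime. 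In each item $(\mathrm{i})$--$(\mathrm{iii})$ the threshold exponent ($m=d+k-1$ for $\alpha\le d$ and $m=d+k$ for $\alpha=d+1$) is exactly the one at which the thin-gap contribution transitions from dominant to negligible.

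\textbf{Singular regime ($m\ge d+k-1$ resp.\ $d+k$).} Here I would substitute Corollary \ref{thm86}, replacing $\nabla u_{\alpha}$ and $\nabla u_{0}$ by $\nabla\bar u_{\alpha}$ and $\nabla\bar u_{0}$ modulo the explicit error terms in \eqref{ATCG001}. Because the correction fields $\mathcal{F}_{0}$ and $\mathcal{F}_{\alpha}$ in \eqref{OPQ1}--\eqref{OPAKLN01} have been engineered so that $\mathcal{L}_{\lambda,\mu}\bar u_{\alpha}$ is benign, the dominant contribution to $\int_{\Omega_{R}}(\mathbb{C}^{0}e(\bar u_{0}),e(\bar u_{\alpha}))\,dx$ reduces to integrals of the form
\[
\mathcal{L}_{d}^{\alpha}\!\!\int_{|x'|<R}\!\frac{\psi_{\alpha}^{\,*}(x',h)\,\varphi^{*}(x',h)}{\delta(x')}\,dx',
\]
which, after using the parity hypotheses packed into $(\mathbf{E1})$--$(\mathbf{E3})$ and the pointwise sandwich $\kappa_{1}|x'|^{m}\le\delta-\varepsilon\le\kappa_{2}|x'|^{m}$, collapse (up to a $\mathcal{L}_{d}^{\alpha}$ factor) to $\eta\int_{0}^{R}\frac{r^{k}\,r^{d-2}}{\varepsilon+r^{m}}\,dr$, producing precisely $\rho_{k}(d,m;\varepsilon)$ for items $(\mathrm{i})$ and $\rho_{k+1}$ for item $(\mathrm{ii})$ (the extra power of $r$ coming from the component $\psi_{d+1}^{d}=-x_{1}$). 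The two-sided bounds in terms of $\kappa_{1},\kappa_{2}$ follow from changing variables $r\mapsto \kappa_{j}^{1/m}r$ in the upper and lower envelopes of $\delta$. For $\alpha\ge d+2$ (item $(\mathrm{iii})$), the parity assumptions force the leading integrand to be odd in some $x_{j}$, so the would-be dominant term vanishes and one is always in the convergent regime.

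\textbf{Subcritical regime ($m<d+k-1$, etc.).} For the remaining cases I would write
\[
Q_{\alpha}[\varphi]-Q_{\alpha}^{\ast}[\varphi]=-\!\int_{\partial D}\frac{\partial(u_{\alpha}-u_{\alpha}^{\ast})}{\partial\nu_{0}}\Big|_{+}\!\!\cdot\varphi,
\]
and control the right-hand side through the energy of $w:=u_{\alpha}-u_{\alpha}^{\ast}$ on the common sub-domain $\Omega\cap\Omega^{\ast}$. The usual device is to cut off at a scale $r_{\ast}=\varepsilon^{\theta}$: inside $\{|x'|<r_{\ast}\}$ one uses the crude bound from $\bar u_{\alpha}$ and (H1) to estimate the thin-gap energy by a power of $r_{\ast}^{d-1+i}\varepsilon^{-1}$ type quantity; outside, $w$ is harmonic (in the Lamé sense) and satisfies small Dirichlet data (of order $\varepsilon$ on the shifted inclusion boundary), so its $H^{1}$ norm is $O(\varepsilon)$. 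Minimizing the sum of the two competing error terms in $\theta$ yields the advertised exponents $\varepsilon^{(d+k-1-m)/(2(d+k-1))}$, $\varepsilon^{(d+k-m)/(m(d+k))}$, and $\varepsilon^{(d+k-1)/(m(m+d+k-1))}$; the denominators reflect exactly the optimization scale and the numerators reflect the gap between $m$ and the critical exponent.

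\textbf{Main difficulty.} The delicate part will not be the singular regime (which is a direct computation once Corollary \ref{thm86} is applied) but rather the precise convergence rates in the subcritical regime: one must juggle the $H^{1}$-energy splitting, the mismatch between the domains $\Omega$ and $\Omega^{\ast}$, and the extra growth factor $|x'|^{k}$ in $\varphi$, and then fine-tune the cutoff exponent $\theta$ so that the $r_{\ast}$-interior and $r_{\ast}$-exterior contributions balance to give exactly the stated power. Tracking the constants through these optimizations carefully enough to recover the explicit exponents in $(\mathrm{i})$, $(\mathrm{ii})$ and $(\mathrm{iii})$ (which differ because the $\alpha=d+1$ case has an extra factor of $|x'|$ from the rigid motion $\psi_{d+1}$) is the principal technical bookkeeping task.
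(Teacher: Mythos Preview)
Your high-level strategy matches the paper's: the singular regime is handled by inserting Corollary~\ref{thm86} (the paper computes directly on $\partial D_{1}$ rather than via the volume bilinear form, but the two are equivalent by \eqref{Le2.01222}), and the subcritical regime is handled via the difference representation on $\partial D$ followed by a cutoff at scale $\varepsilon^{\theta}$ and optimization in $\theta$. However, two technical ingredients that actually drive the stated exponents are missing from your outline.

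First, your outer estimate relies on an $H^{1}$-energy bound for $w=u_{\alpha}-u_{\alpha}^{\ast}$, but an $H^{1}$ bound alone does not control the boundary flux $\int_{\partial D}\tfrac{\partial w}{\partial\nu_{0}}\cdot\varphi$ pointwise. The paper instead obtains a \emph{pointwise} bound $|u_{\alpha}-u_{\alpha}^{\ast}|\le C\varepsilon^{1/2}$ on $D\setminus(\overline{D_{1}\cup D_{1}^{\ast}\cup\mathcal{C}_{\varepsilon^{1/(2m)}}})$ by verifying it on the boundary of this region (via the mean-value theorem in $x_{d}$ and the gradient estimates from Corollary~\ref{thm86}) and then invoking the maximum principle for the Lam\'e system from \cite{MMN2007}. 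Standard interior/boundary elliptic estimates then upgrade this to $|\nabla(u_{\alpha}-u_{\alpha}^{\ast})|\le C\varepsilon^{m\bar\gamma}$ on $\partial D\setminus\mathcal{C}_{\varepsilon^{1/(2m)-\bar\gamma}}$, which bounds the outer piece $\mathcal{A}^{out}$ directly.

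Second, and more importantly for the exponent bookkeeping, the inner piece $\mathcal{A}^{in}$ is decomposed as $\mathcal{A}_{\bar u}+\mathcal{A}_{w}$ and then $\mathcal{A}_{\bar u}=\mathcal{A}_{\bar u}^{1}+\mathcal{A}_{\bar u}^{2}$, where $\mathcal{A}_{\bar u}^{2}$ isolates the single worst integrand (for $\alpha\le d$ it is $\int\mu\,\partial_{x_{d}}(\bar u_{\alpha}^{\alpha}-\bar u_{\alpha}^{\ast\alpha})\nu_{d}\varphi^{\alpha}$; for $\alpha=d+1$ it is the analogous $x_{1}\partial_{x_{d}}(\bar v-\bar v^{\ast})\nu_{d}\varphi^{d}$ term). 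Under $(\mathbf{E2})$ or $(\mathbf{E3})$ this integrand is \emph{odd} in some $x_{j}$ and hence $\mathcal{A}_{\bar u}^{2}=0$, so the inner error is governed by the milder $\mathcal{A}_{\bar u}^{1}$; under $(\mathbf{E1})$ it survives and dominates. This parity cancellation inside the subcritical error analysis---not the parity you invoke in the singular integral---is exactly what distinguishes the exponent $\tfrac{d+k-1-m}{2(d+k-1)}$ from $\tfrac{d+k-2}{2(m+d+k-1)}$ in part $(\mathrm{i})$, and similarly in part $(\mathrm{ii})$. Your optimization sketch treats all cases uniformly, so as written it would only recover the $(\mathbf{E1})$-type exponent.
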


\begin{proof}[Proof of Lemma \ref{KM323}]
{\bf Step 1.} Proofs of \eqref{KAWQ001}--\eqref{KAWQ002}. We divide into two subparts to prove \eqref{KAWQ001}--\eqref{KAWQ002} in the following.

{\bf Step 1.1.} By definition, we have
\begin{align*}
Q_{\alpha}[\varphi]=&\sum^{d}_{i=1}\int_{\partial D_{1}}\frac{\partial u_{0i}}{\partial\nu_{0}}\Big|_{+}\cdot\psi_{\alpha}\notag\\
=&\int_{\partial D_{1}}\bigg[\lambda\sum^{d}_{i,j=1}\partial_{x_{j}}u_{0i}^{j}\nu_{\alpha}+\mu\sum^{d}_{i,j=1}(\partial_{x_{j}}u_{0i}^{\alpha}+\partial_{x_{\alpha}}u_{0i}^{j})\nu_{j}\bigg].
\end{align*}

Then in view of \eqref{KHA001}, it follows from Corollary \ref{thm86} that

$(a)$ for $\alpha=1,...,d-1$,
\begin{align*}
Q_{\alpha}[\varphi]=&\int_{\partial D_{1}}\bigg[\lambda\sum^{d}_{i,j=1}\partial_{x_{j}}u_{0i}^{j}\nu_{\alpha}+\mu\sum^{d}_{(i,j)\neq(\alpha,d)}\partial_{x_{j}}u_{0i}^{\alpha}\nu_{j}+\mu\sum^{d}_{i,j=1}\partial_{x_{\alpha}}u_{0i}^{j}\nu_{j}\notag\\
&\quad\quad\quad+\mu\partial_{x_{d}}(u_{0\alpha}^{\alpha}-\bar{u}_{0\alpha}^{\alpha})\nu_{d}\bigg]+\int_{\partial D_{1}}\mu\partial_{x_{d}}\bar{u}_{0\alpha}^{\alpha}\nu_{d}\notag\\
=&-\mu\int_{\partial D_{1}\cap\Gamma^{+}_{R}}\frac{\varphi^{\alpha}(x',h(x'))}{\varepsilon+h_{1}(x')-h(x')}\nu_{d}+O(1)\|\varphi\|_{C^{2}(\partial D)},
\end{align*}
which implies that
\begin{align*}
Q_{\alpha}[\varphi]\lesssim&\mu\eta\int_{|x'|<R}\frac{|x'|^{k}}{\varepsilon+\kappa_{1}|x'|^{m}}\lesssim\mu\eta\int^{R}_{0}\frac{s^{d+k-2}}{\varepsilon+\kappa_{1}s^{m}}\lesssim\frac{\mu\eta}{\kappa_{1}^{\frac{d+k-1}{m}}}\rho_{k}(d,m;\varepsilon),
\end{align*}
and
\begin{align*}
Q_{\alpha}[\varphi]\gtrsim&\mu\eta\int_{|x'|<R}\frac{|x'|^{k}}{\varepsilon+\kappa_{2}|x'|^{m}}\gtrsim\mu\eta\int^{R}_{0}\frac{ s^{d+k-2}}{\varepsilon+\kappa_{2}s^{m}}\gtrsim\frac{\mu\eta}{\kappa_{2}^{\frac{d+k-1}{m}}}\rho_{k}(d,m;\varepsilon);
\end{align*}

$(b)$ for $\alpha=d$,
\begin{align*}
Q_{d}[\varphi]=&\int_{\partial D_{1}}\bigg[\sum^{d}_{(i,j)\neq(d,d)}\big(\lambda\partial_{x_{j}}u_{0i}^{j}\nu_{d}+\mu(\partial_{x_{j}}u_{0i}^{d}+\partial_{x_{d}}u_{0i}^{j})\nu_{j}\big)\notag\\
&\quad\quad\quad+(\lambda+2\mu)\partial_{x_{d}}(u_{0d}^{d}-\bar{u}_{0d}^{d})\nu_{d}\bigg]+\int_{\partial D_{1}}(\lambda+2\mu)\partial_{x_{d}}\bar{u}_{0d}^{d}\nu_{d}\notag\\
=&-(\lambda+2\mu)\int_{\partial D_{1}\cap\Gamma^{+}_{R}}\frac{\varphi^{d}(x',h(x'))}{\varepsilon+h_{1}(x')-h(x')}\nu_{d}+O(1)\|\varphi\|_{C^{2}(\partial D)},
\end{align*}
which yields that
\begin{align*}
Q_{d}[\varphi]\lesssim&(\lambda+2\mu)\eta\int_{|x'|<R}\frac{|x'|^{k}}{\varepsilon+\kappa_{1}|x'|^{m}}\lesssim\frac{(\lambda+2\mu)\eta}{\kappa_{1}^{\frac{d+k-1}{m}}}\rho_{k}(d,m;\varepsilon),
\end{align*}
and
\begin{align*}
Q_{d}[\varphi]\gtrsim&(\lambda+2\mu)\eta\int_{|x'|<R}\frac{|x'|^{k}}{\varepsilon+\kappa_{2}|x'|^{m}}\gtrsim\frac{(\lambda+2\mu)\eta}{\kappa_{2}^{\frac{d+k-1}{m}}}\rho_{k}(d,m;\varepsilon).
\end{align*}

{\bf Step 1.2.} From \eqref{Le2.01222}, we deduce that for $\alpha=1,2,...,d$,
\begin{align*}
Q_{\alpha}[\varphi]-Q^{\ast}_{\alpha}[\varphi]=\int_{\partial D}\frac{\partial(u_{\alpha}-u_{\alpha}^{\ast})}{\partial\nu_{0}}\Big|_{+}\cdot\varphi(x),
\end{align*}
where $u_{\alpha}^{\ast}$ and $u_{\alpha}$ are defined by \eqref{l03.001} and \eqref{P2.005}, respectively.
%\begin{equation}\label{l03.001}
%\begin{cases}
%\mathcal{L}_{\lambda,\mu}u_{\alpha}^{\ast}=0,\quad\;\,&\mathrm{in}\;\,\Omega^{\ast},\\
%u_{\alpha}^{\ast}=\psi_{\alpha},\quad\;\,&\mathrm{on}\;\,\partial D_{1}^{\ast}\setminus\{0\},\\
%u_{\alpha}^{\ast}=0,\quad\;\,&\mathrm{on}\;\,\partial D.
%\end{cases}
%\end{equation}

For $0<t\leq2R$, denote $\Omega_{t}^{\ast}:=\Omega^{\ast}\cap\{|x'|<t\}$. Introduce a scalar auxiliary function $\bar{v}^{\ast}\in C^{2}(\mathbb{R}^{d})$ such that $\bar{v}^{\ast}=1$ on $\partial D_{1}^{\ast}\setminus\{0\}$, $\bar{v}^{\ast}=0$ on $\partial D$, and
\begin{align*}
\bar{v}^{\ast}(x',x_{d})=\frac{x_{d}-h(x')}{h_{1}(x')-h(x')},\;\,\mathrm{in}\;\Omega_{2R}^{\ast},\quad\|\bar{v}^{\ast}\|_{C^{2}(\Omega^{\ast}\setminus\Omega^{\ast}_{R})}\leq C.
\end{align*}
Define a family of auxiliary functions as follows:
\begin{align*}
\bar{u}^{\ast}_{\alpha}=&\psi_{\alpha}\bar{v}^{\ast}+\mathcal{F}_{\alpha}^{\ast},\quad \alpha=1,2,...,\frac{d(d+1)}{2},
\end{align*}
where
\begin{align}\label{PASK001}
\mathcal{F}_{\alpha}^{\ast}=\frac{\lambda+\mu}{\mu}f(\bar{v}^{\ast})\psi^{d}_{\alpha}\sum^{d-1}_{i=1}\partial_{x_{i}}\delta\,e_{i}+\frac{\lambda+\mu}{\lambda+2\mu}f(\bar{v}^{\ast})\sum^{d-1}_{i=1}\psi^{i}_{\alpha}\partial_{x_{i}}\delta\,e_{d}.
\end{align}
Making use of conditions ({\bf{H1}})--({\bf{H2}}), we obtain that for $x\in\Omega_{R}^{\ast}$,
\begin{align}\label{Lw3.003}
|\nabla_{x'}(\bar{u}_{\alpha}-\bar{u}^{\ast}_{\alpha})|\leq\frac{C}{|x'|},\quad |\partial_{x_{d}}(\bar{u}_{\alpha}-\bar{u}^{\ast}_{\alpha})|\leq\frac{C\varepsilon}{|x'|^{m}(\varepsilon+|x'|^{m})}.
\end{align}
By applying Corollary \ref{thm86} to $u_{\alpha}^{\ast}$ defined in \eqref{l03.001}, we have
\begin{align}\label{Lw3.005}
|\nabla(u_{\alpha}^{\ast}-\bar{u}_{\alpha}^{\ast})|\leq C|x'|^{m-2},\quad x\in\Omega_{R}^{\ast}.
\end{align}
For $0<r<R$, write
\begin{align}\label{LNM656}
\mathcal{C}_{r}:=\left\{x\in\mathbb{R}^{d}\Big|\;|x'|<r,\,\frac{1}{2}\min_{|x'|\leq r}h(x')\leq x_{d}\leq\varepsilon+2\max_{|x'|\leq r}h_{1}(x')\right\}.
\end{align}

We next estimate the difference $|Q_{\alpha}[\varphi]-Q^{\ast}_{\alpha}[\varphi]|$ step by step. Observe that $u_{\alpha}-u_{\alpha}^{\ast}$ solves
\begin{align*}
\begin{cases}
\mathcal{L}_{\lambda,\mu}(u_{\alpha}-u_{\alpha}^{\ast})=0,&\mathrm{in}\;\,D\setminus(\overline{D_{1}\cup D_{1}^{\ast}}),\\
u_{\alpha}-u_{\alpha}^{\ast}=\psi_{\alpha}-u_{\alpha}^{\ast},&\mathrm{on}\;\,\partial D_{1}\setminus D_{1}^{\ast},\\
u_{\alpha}-u_{\alpha}^{\ast}=u_{\alpha}-\psi_{\alpha},&\mathrm{on}\;\,\partial D_{1}^{\ast}\setminus(D_{1}\cup\{0\}),\\
u_{\alpha}-u_{\alpha}^{\ast}=0,&\mathrm{on}\;\,\partial D.
\end{cases}
\end{align*}
To start with, we estimate $|u_{\alpha}-u_{\alpha}^{\ast}|$ on $\partial(D_{1}\cup D_{1}^{\ast})\setminus\mathcal{C}_{\varepsilon^{\gamma}}$, where $0<\gamma<1/2$ to be determined later. Recalling the definition of $u_{\alpha}^{\ast}$, it follows from the standard boundary and interior estimates of elliptic systems that
$$|\partial_{x_{d}}u_{\alpha}^{\ast}|\leq C,\quad\;\,\mathrm{in}\;\,\Omega^{\ast}\setminus\Omega^{\ast}_{R}.$$
Then for $x\in\partial D_{1}\setminus D_{1}^{\ast}$,
\begin{align}\label{Lw3.007}
|(u_{\alpha}-u_{\alpha}^{\ast})(x',x_{d})|=|u_{\alpha}^{\ast}(x',x_{d}-\varepsilon)-u_{\alpha}^{\ast}(x',x_{d})|\leq C\varepsilon.
\end{align}
Utilizing \eqref{ATCG001}, we obtain that for $x\in\partial D_{1}^{\ast}\setminus(D_{1}\cup\mathcal{C}_{\varepsilon^{\gamma}})$,
\begin{align}\label{Lw3.008}
|(u_{\alpha}-u_{\alpha}^{\ast})(x',x_{d})|=|u_{\alpha}(x',x_{d})-u_{\alpha}(x',x_{d}+\varepsilon)|\leq C\varepsilon^{1-m\gamma}.
\end{align}
From \eqref{ATCG001} and \eqref{Lw3.003}--\eqref{Lw3.005}, we deduce that for $x\in\Omega_{R}^{\ast}\cap\{|x'|=\varepsilon^{\gamma}\}$,
\begin{align*}
|\partial_{x_{d}}(u_{\alpha}-u_{\alpha}^{\ast})|\leq&|\partial_{x_{d}}(u_{\alpha}-\bar{u}_{\alpha})|+|\partial_{x_{d}}(\bar{u}_{\alpha}-\bar{u}_{\alpha}^{\ast})|+|\partial_{x_{d}}(u_{\alpha}^{\ast}-\bar{u}_{\alpha}^{\ast})|\\
\leq&C\left(\frac{1}{\varepsilon^{2m\gamma-1}}+1\right).
\end{align*}
This, together with the fact that $\bar{u}_{\alpha}-\bar{u}_{\alpha}^{\ast}=0$ on $\partial D$, reads that
\begin{align}\label{Lw3.009}
|(u_{\alpha}-u_{\alpha}^{\ast})(x',x_{d})|=&|(u_{\alpha}-u_{\alpha}^{\ast})(x',x_{d})-(u_{\alpha}-u_{\alpha}^{\ast})(x',h(x'))|\notag\\
\leq&C\big(\varepsilon^{1-m\gamma}+\varepsilon^{m\gamma}\big).
\end{align}
Let $\gamma=\frac{1}{2m}$. Then it follows from \eqref{Lw3.007}--\eqref{Lw3.009} that
$$|u_{\alpha}-u_{\alpha}^{\ast}|\leq C\varepsilon^{\frac{1}{2}},\quad\;\,\mathrm{on}\;\,\partial\big(D\setminus\big(\overline{D_{1}\cup D_{1}^{\ast}\cup\mathcal{C}_{\varepsilon^{\frac{1}{2m}}}}\big)\big),$$
which, in combination with the maximum principle for Lam\'{e} system in \cite{MMN2007}, yields that
\begin{align}\label{LMZQT001}
|u_{\alpha}-u_{\alpha}^{\ast}|\leq C\varepsilon^{\frac{1}{2}},\quad\;\,\mathrm{in}\;\,D\setminus\big(\overline{D_{1}\cup D_{1}^{\ast}\cup\mathcal{C}_{\varepsilon^{\frac{1}{2m}}}}\big).
\end{align}
Then from the standard interior and boundary estimates for Lam\'{e} system, it follows that for any $0<\bar{\gamma}<\frac{1}{2m}$,
$$|\nabla(u_{\alpha}-u_{\alpha}^{\ast})|\leq C\varepsilon^{m\bar{\gamma}},\quad\;\,\mathrm{in}\;\,D\setminus\big(\overline{D_{1}\cup D_{1}^{\ast}\cup\mathcal{C}_{\varepsilon^{\frac{1}{2m}-\bar{\gamma}}}}\big),$$
which indicates that
\begin{align}\label{Lw3.010}
|\mathcal{A}^{out}|:=\left|\int_{\partial D\setminus\mathcal{C}_{\varepsilon^{\frac{1}{2m}-\bar{\gamma}}}}\frac{\partial(u_{\alpha}-u_{\alpha}^{\ast})}{\partial\nu_{0}}\Big|_{+}\cdot\varphi(x)\right|\leq C\varepsilon^{m\bar{\gamma}}\|\varphi\|_{L^{\infty}(\partial D)},
\end{align}
where $0<\bar{\gamma}<\frac{1}{2m}$ will be determined later.

We next estimate the remainder as follows:
\begin{align*}
\mathcal{A}^{in}:=&\int_{\partial D\cap\mathcal{C}_{\varepsilon^{\frac{1}{2m}-\bar{\gamma}}}}\frac{\partial(u_{\alpha}-u_{\alpha}^{\ast})}{\partial\nu_{0}}\Big|_{+}\cdot\varphi(x)\\
=&\int_{\partial D\cap\mathcal{C}_{\varepsilon^{\frac{1}{2m}-\bar{\gamma}}}}\frac{\partial(\bar{u}_{\alpha}-\bar{u}_{\alpha}^{\ast})}{\partial\nu_{0}}\Big|_{+}\cdot\varphi(x)+\int_{\partial D\cap\mathcal{C}_{\varepsilon^{\frac{1}{2m}-\bar{\gamma}}}}\frac{\partial(w_{\alpha}-w_{\alpha}^{\ast})}{\partial\nu_{0}}\Big|_{+}\cdot\varphi(x)\\
=&:\mathcal{A}_{\bar{u}}+\mathcal{A}_{w},
\end{align*}
where $w_{\alpha}=u_{\alpha}-\bar{u}_{\alpha}$, $w_{\alpha}^{\ast}=u_{\alpha}^{\ast}-\bar{u}_{\alpha}^{\ast}$. On one hand, if $\alpha=1,...,d-1$, then
\begin{align*}
\mathcal{A}_{\bar{u}}=&\int_{\partial D\cap\mathcal{C}_{\varepsilon^{\frac{1}{2m}-\bar{\gamma}}}}\bigg\{\lambda\sum^{d}_{i=1}\partial_{x_{\alpha}}(\bar{u}^{\alpha}_{\alpha}-\bar{u}^{\ast\alpha}_{\alpha})\nu_{i}\varphi^{i}+\mu\partial_{x_{d}}(\bar{u}^{\alpha}_{\alpha}-\bar{u}^{\ast\alpha}_{\alpha})\nu_{\alpha}\varphi^{d}\\
&\qquad+\mu\sum^{d-1}_{i=1}\partial_{x_{i}}(\bar{u}^{\alpha}_{\alpha}-\bar{u}^{\ast\alpha}_{\alpha})(\nu_{\alpha}\varphi^{i}+\nu_{i}\varphi^{\alpha})+\lambda\sum^{d}_{i=1}\partial_{x_{d}}(\mathcal{F}_{\alpha}^{d}-\mathcal{F}_{\alpha}^{\ast d})\nu_{i}\varphi^{i}\notag\\
&\qquad+\mu\sum^{d}_{i=1}\partial_{x_{i}}(\mathcal{F}_{\alpha}^{d}-\mathcal{F}_{\alpha}^{\ast d})(\nu_{d}\varphi^{i}+\nu_{i}\varphi^{d})\bigg\}\\
&+\int_{\partial D\cap\mathcal{C}_{\varepsilon^{\frac{1}{2m}-\bar{\gamma}}}}\mu\partial_{x_{d}}(\bar{u}^{\alpha}_{\alpha}-\bar{u}^{\ast\alpha}_{\alpha})\nu_{d}\varphi^{\alpha}\\
=&:\mathcal{A}^{1}_{\bar{u}}+\mathcal{A}^{2}_{\bar{u}}.
\end{align*}
Combining \eqref{OPAKLN01}, \eqref{KHA002} and \eqref{PASK001}--\eqref{Lw3.003}, we obtain
\begin{align}\label{Lw3.011}
|\mathcal{A}^{1}_{\bar{u}}|\leq\int_{|x'|<\varepsilon^{\frac{1}{2m}-\bar{\gamma}}}C|x'|^{k-1}\leq C\varepsilon^{\left(\frac{1}{2m}-\bar{\gamma}\right)(d+k-2)},
\end{align}
while, for the second term $\mathcal{A}^{2}_{\bar{u}}$, we have

$(\rm{i})$ if condition ({\bf{E1}}) holds, then for $m<d+k-1$,
\begin{align}\label{Lw3.012}
|\mathcal{A}^{2}_{\bar{u}}|\leq\int_{|x'|<\varepsilon^{\frac{1}{2m}-\bar{\gamma}}}C|x'|^{k-m}\leq C\varepsilon^{\left(\frac{1}{2m}-\bar{\gamma}\right)(d+k-1-m)};
\end{align}

$(\rm{ii})$ if condition ({\bf{E2}}) or ({\bf{E3}}) holds, then in view of the fact that the integrand is odd and the integrating domain is symmetric, we have
\begin{align*}
\mathcal{A}^{2}_{\bar{u}}=0.
\end{align*}

On the other hand, if $\alpha=d$, then
\begin{align*}
\mathcal{A}_{\bar{u}}=&\int_{\partial D\cap\mathcal{C}_{\varepsilon^{\frac{1}{2m}-\bar{\gamma}}}}\bigg\{\lambda\sum^{d-1}_{i=1}\partial_{x_{d}}(\bar{u}^{d}_{d}-\bar{u}^{\ast d}_{d})\nu_{i}\varphi^{i}+\mu\sum^{d-1}_{i=1}\partial_{x_{i}}(\bar{u}^{\alpha}_{\alpha}-\bar{u}^{\ast\alpha}_{\alpha})(\nu_{\alpha}\varphi^{i}+\nu_{i}\varphi^{\alpha})\notag\\
&\qquad+\lambda\sum^{d-1}_{i=1}\sum^{d}_{j=1}\partial_{x_{i}}(\mathcal{F}_{d}^{i}-\mathcal{F}_{d}^{\ast i})\nu_{j}\varphi^{j}+\mu\sum^{d-1}_{i=1}\sum^{d}_{j=1}\partial_{x_{j}}(\mathcal{F}_{d}^{i}-\mathcal{F}_{d}^{\ast i})(\nu_{i}\varphi^{j}+\nu_{j}\varphi^{i})\bigg\}\\
&+\int_{\partial D\cap\mathcal{C}_{\varepsilon^{\frac{1}{2m}-\bar{\gamma}}}}(\lambda+2\mu)\partial_{x_{d}}(\bar{u}^{d}_{d}-\bar{u}^{\ast d}_{d})\nu_{d}\varphi^{d}\\
=&:\mathcal{A}^{1}_{\bar{u}}+\mathcal{A}^{2}_{\bar{u}}.
\end{align*}
By the same argument as in \eqref{Lw3.011}--\eqref{Lw3.012}, we have
\begin{align}\label{KAY001}
|\mathcal{A}^{1}_{\bar{u}}|\leq C\varepsilon^{\left(\frac{1}{2m}-\bar{\gamma}\right)(d+k-2)},
\end{align}
and
\begin{align}\label{LTAN001}
\mathcal{A}^{2}_{\bar{u}}=&
\begin{cases}
O(1)\varepsilon^{(\frac{1}{2m}-\bar{\gamma})(d+k-1-m)},&\text{if condition}\; ({\bf{E1}})\; \text{holds},\;m<d+k-1,\\
0,&\text{if condition}\;({\bf{E2}})\;\text{or}\;({\bf{E3}})\;\text{holds}.
\end{cases}
\end{align}

We now proceed to estimate $|\mathcal{A}_{w}|$. An immediate consequence of Corollary \ref{thm86} gives that for $0<|x'|\leq R$,
\begin{align}\label{Lw3.016}
|\nabla w_{\alpha}|+|\nabla w_{\alpha}^{\ast}|\leq C,\quad\alpha=1,2,...,d.
\end{align}
Since
\begin{align*}
\mathcal{A}_{w}=&\int_{\partial D\cap\mathcal{C}_{\varepsilon^{\frac{1}{2m}-\bar{\gamma}}}}\bigg\{\lambda\sum^{d}_{i,j=1}\partial_{x_{i}}(w_{\alpha}^{i}-w_{\alpha}^{\ast i})\nu_{j}\varphi^{j}\\
&\quad+\mu\sum^{d}_{i,j=1}[\partial_{x_{j}}(w_{\alpha}^{i}-w_{\alpha}^{\ast i})+\partial_{x_{i}}(w_{\alpha}^{j}-w_{\alpha}^{\ast j})]\nu_{j}\varphi^{i}\bigg\},
\end{align*}
then we deduce from \eqref{Lw3.016} that
\begin{align}\label{Lw3.017}
|\mathcal{A}_{w}|\leq\int_{|x'|<\varepsilon^{\frac{1}{2m}-\bar{\gamma}}}C|x'|^{k}\leq C\varepsilon^{\left(\frac{1}{2m}-\bar{\gamma}\right)(d+k-1)}.
\end{align}
By using \eqref{Lw3.010}--\eqref{LTAN001} and \eqref{Lw3.017}, we derive that for $\alpha=1,2,...,d,$

$(\rm{i})$ if condition ({\bf{E1}}) holds, by picking $\bar{\gamma}=\frac{d+k-1-m}{2m(d+k-1)}$, then
\begin{align*}
|Q_{\alpha}[\varphi]-Q^{\ast}_{\alpha}[\varphi]|\leq C\varepsilon^{\frac{d+k-1-m}{2(d+k-1)}},\quad m<d+k-1;
\end{align*}

$(\rm{ii})$ if condition ({\bf{E2}}) or ({\bf{E3}}) holds, by picking $\bar{\gamma}=\frac{d+k-2}{2m(m+d+k-2)}$, then
\begin{align*}
|Q_{\alpha}[\varphi]-Q^{\ast}_{\alpha}[\varphi]|\leq C\varepsilon^{\frac{d+k-2}{2(m+d+k-2)}}.
\end{align*}

{\bf Step 2.} Proofs of \eqref{KAT90}--\eqref{AUOP001}. We next divide into two substeps to complete the proofs of \eqref{KAT90}--\eqref{AUOP001}.

{\bf Step 2.1.} If condition ({\bf{E2}}) holds for $m\geq d+k$, then it follows from Corollary \ref{thm86} and \eqref{KHA001} again that
\begin{align*}
Q_{d+1}[\varphi]=&\sum^{d}_{i=1}\int_{\partial D_{1}}\frac{\partial u_{0i}}{\partial\nu_{0}}\Big|_{+}\cdot\psi_{d+1}\notag\\
=&\int_{\partial D_{1}}\bigg[\sum^{d}_{i,j=1}\left(\lambda\partial_{x_{j}}u_{0i}^{j}\nu_{1}+\mu(\partial_{x_{j}}u_{0i}^{1}+\partial_{x_{1}}u_{0i}^{j})\nu_{j}\right)x_{d}-\sum^{d}_{i,j=1}\lambda\partial_{x_{j}}u_{0i}^{j}\nu_{d}x_{1}\\
&\quad\quad\quad-\sum_{(i,j)\neq(d,d)}\mu(\partial_{x_{j}}u_{0i}^{d}+\partial_{x_{d}}u_{0i}^{j})\nu_{j}x_{1}-(\lambda+2\mu)\partial_{x_{d}}(u_{0d}^{d}-\bar{u}_{0d}^{d})\nu_{d}x_{1}\bigg]\\
&-\int_{\partial D_{1}}(\lambda+2\mu)\partial_{x_{d}}\bar{u}_{0d}^{d}\nu_{d}x_{1}\\
=&(\lambda+2\mu)\int_{\partial D_{1}\cap\Gamma^{+}_{R}}\frac{\varphi^{d}(x',h(x'))x_{1}}{\varepsilon+h_{1}(x')-h(x')}\nu_{d}+O(1)\|\varphi\|_{C^{2}(\partial D)},
\end{align*}
where we used the fact that $|x_{d}|=|\varepsilon+h_{1}(x')|\leq C(\varepsilon+|x'|^{m})\;\mathrm{on}\;\Gamma_{R}^{+}.$ This yields that
\begin{align*}
Q_{d+1}[\varphi]\lesssim&(\lambda+2\mu)\eta\int_{|x'|<R}\frac{|x_{1}|^{k+1}}{\varepsilon+\kappa_{1}|x'|^{m}}\lesssim(\lambda+2\mu)\eta\int_{0}^{R}\frac{s^{d+k-1}}{\varepsilon+\kappa_{1}s^{m}}\notag\\
\lesssim&\frac{(\lambda+2\mu)\eta}{\kappa_{1}^{\frac{d+k}{m}}}\rho_{k+1}(d,m;\varepsilon),
\end{align*}
and
\begin{align*}
Q_{d+1}[\varphi]\gtrsim&(\lambda+2\mu)\eta\int_{|x'|<R}\frac{|x_{1}|^{k+1}}{\varepsilon+\kappa_{2}|x'|^{m}}\gtrsim\frac{(\lambda+2\mu)\eta}{\kappa_{2}^{\frac{d+k}{m}}}\rho_{k+1}(d,m;\varepsilon).
\end{align*}

{\bf Step 2.2.} First, using \eqref{Le2.01222}, we have
\begin{align*}
Q_{d+1}[\varphi]-Q^{\ast}_{d+1}[\varphi]=\int_{\partial D}\frac{\partial(u_{d+1}-u_{d+1}^{\ast})}{\partial\nu_{0}}\Big|_{+}\cdot\varphi(x),
\end{align*}
where $u_{d+1}^{\ast}$ and $u_{d+1}$ are, respectively, defined by \eqref{l03.001} and \eqref{P2.005} with picking $\alpha=d+1$.

Similarly as above, it follows from ({\bf{H1}})--({\bf{H2}}) that for $x\in\Omega_{R}^{\ast}$,
\begin{align}\label{LKT6.003}
|\nabla_{x'}(\bar{u}_{d+1}-\bar{u}^{\ast}_{d+1})|\leq C,\quad|\partial_{x_{d}}(\bar{u}_{d+1}-\bar{u}^{\ast}_{d+1})|\leq\frac{C\varepsilon}{|x'|^{m-1}(\varepsilon+|x'|^{m})}.
\end{align}
Applying Corollary \ref{thm86} to $u_{d+1}^{\ast}$, we derive
\begin{align}\label{LKT6.005}
|\nabla(u_{d+1}^{\ast}-\bar{u}_{d+1}^{\ast})|\leq C,\quad x\in\Omega_{R}^{\ast}.
\end{align}

Observe that $u_{d+1}-u_{d+1}^{\ast}$ satisfies
\begin{align*}
\begin{cases}
\mathcal{L}_{\lambda,\mu}(u_{d+1}-u_{d+1}^{\ast})=0,&\mathrm{in}\;\,D\setminus(\overline{D_{1}\cup D_{1}^{\ast}}),\\
u_{d+1}-u_{d+1}^{\ast}=\psi_{d+1}-u_{d+1}^{\ast},&\mathrm{on}\;\,\partial D_{1}\setminus D_{1}^{\ast},\\
u_{d+1}-u_{d+1}^{\ast}=u_{d+1}-\psi_{d+1},&\mathrm{on}\;\,\partial D_{1}^{\ast}\setminus(D_{1}\cup\{0\}),\\
u_{d+1}-u_{d+1}^{\ast}=0,&\mathrm{on}\;\,\partial D.
\end{cases}
\end{align*}
We first give the estimation of $|u_{d+1}-u_{d+1}^{\ast}|$ on $\partial(D_{1}\cup D_{1}^{\ast})\setminus\mathcal{C}_{\varepsilon^{\gamma}}$, where $\mathcal{C}_{\varepsilon^{\gamma}}$ is defined in (\ref{LNM656}) and $\frac{1}{2m-1}<\gamma<1/2$ to be chosen later. Analogously as before, by using the standard boundary and interior estimates of elliptic systems, we obtain that for $x\in\partial D_{1}\setminus D_{1}^{\ast}$,
\begin{align}\label{LKT6.007}
|(u_{d+1}-u_{d+1}^{\ast})(x',x_{d})|=|u_{d+1}^{\ast}(x',x_{d}-\varepsilon)-u^{\ast}_{d+1}(x',x_{d})|\leq C\varepsilon.
\end{align}
Utilizing \eqref{ATCG001}, we get that for $x\in\partial D_{1}^{\ast}\setminus(D_{1}\cup\mathcal{C}_{\varepsilon^{\gamma}})$,
\begin{align}\label{LKT6.008}
|(u_{d+1}-u_{d+1}^{\ast})(x',x_{d})|=|u_{d+1}(x',x_{d})-u_{d+1}(x',x_{d}+\varepsilon)|\leq C\varepsilon^{1-(m-1)\gamma}.
\end{align}
From Corollary \ref{thm86} and \eqref{LKT6.003}--\eqref{LKT6.005}, it follows that for $x\in\Omega_{R}^{\ast}\cap\{|x'|=\varepsilon^{\gamma}\}$,
\begin{align*}
|\partial_{x_{d}}(u_{d+1}-u_{d+1}^{\ast})|\leq&|\partial_{x_{d}}(u_{d+1}-\bar{u}_{d+1})|+|\partial_{x_{d}}(\bar{u}_{d+1}-\bar{u}^{\ast}_{d+1})|+|\partial_{x_{d}}(u_{d+1}^{\ast}-\bar{u}^{\ast}_{d+1})|\notag\\
\leq&\frac{C}{\varepsilon^{(2m-1)\gamma-1}}.
\end{align*}
This, in combination with the fact that $\bar{u}_{d+1}-\bar{u}_{d+1}^{\ast}=0$ on $\partial D$, yields that
\begin{align}\label{LKT6.009}
|(u_{d+1}-u_{d+1}^{\ast})(x',x_{d})|=&|(u_{d+1}-u_{d+1}^{\ast})(x',x_{d})-(u_{d+1}-u_{d+1}^{\ast})(x',h(x'))|\notag\\
\leq& C\varepsilon^{1-(m-1)\gamma},\quad\;\,\mathrm{for}\;\,x\in\Omega_{R}^{\ast}\cap\{|x'|=\varepsilon^{\gamma}\}.
\end{align}
Take $\gamma=\frac{1}{m}$. From \eqref{LKT6.007}--\eqref{LKT6.009}, we have
$$|u_{d+1}-u_{d+1}^{\ast}|\leq C\varepsilon^{\frac{1}{m}},\quad\;\,\mathrm{on}\;\,\partial\big(D\setminus\big(\overline{D_{1}\cup D_{1}^{\ast}\cup\mathcal{C}_{\varepsilon^{\frac{1}{m}}}}\big)\big),$$
which, together with the maximum principle for the Lam\'{e} system in \cite{MMN2007}, reads that
$$|u_{d+1}-u_{d+1}^{\ast}|\leq C\varepsilon^{\frac{1}{m}},\quad\;\,\mathrm{on}\;\,D\setminus\big(\overline{D_{1}\cup D_{1}^{\ast}\cup\mathcal{C}_{\varepsilon^{\frac{1}{m}}}}\big).$$
Then using the standard interior and boundary estimates for the Lam\'{e} system, we get that for any $\frac{m-1}{m^{2}}<\bar{\gamma}<\frac{1}{m}$,
$$|\nabla(u_{d+1}-u_{d+1}^{\ast})|\leq C\varepsilon^{m\bar{\gamma}-\frac{m-1}{m}},\quad\;\,\mathrm{on}\;\,\partial D\setminus\mathcal{C}_{\varepsilon^{\frac{1}{m}-\bar{\gamma}}},$$
from which we obtain
\begin{align}\label{LKT6.010}
|\mathcal{A}^{out}|:=\left|\int_{\partial D\setminus\mathcal{C}_{\varepsilon^{\frac{1}{m}-\bar{\gamma}}}}\frac{\partial(u_{d+1}-u_{d+1}^{\ast})}{\partial\nu_{0}}\Big|_{+}\cdot\varphi(x)\right|\leq C\varepsilon^{m\bar{\gamma}-\frac{m-1}{m}},
\end{align}
where $\frac{m-1}{m^{2}}<\bar{\gamma}<\frac{1}{m}$ will be given in the following.

It remains to estimate the residual part $\mathcal{A}^{in}$ as follows:
\begin{align*}
\mathcal{A}^{in}:=&\int_{\partial D\cap\mathcal{C}_{\varepsilon^{\frac{1}{m}-\bar{\gamma}}}}\frac{\partial(u_{d+1}-u_{d+1}^{\ast})}{\partial\nu_{0}}\Big|_{+}\cdot\varphi(x)\\
=&\int_{\partial D\cap\mathcal{C}_{\varepsilon^{\frac{1}{m}-\bar{\gamma}}}}\frac{\partial(w_{d+1}-w_{d+1}^{\ast})}{\partial\nu_{0}}\Big|_{+}\cdot\varphi(x)+\int_{\partial D\cap\mathcal{C}_{\varepsilon^{\frac{1}{m}-\bar{\gamma}}}}\frac{\partial(\bar{u}_{d+1}-\bar{u}_{d+1}^{\ast})}{\partial\nu_{0}}\Big|_{+}\cdot\varphi(x)\\
=&:\mathcal{A}_{w}+\mathcal{A}_{\bar{u}},
\end{align*}
where $w_{d+1}=u_{d+1}-\bar{u}_{d+1}$, $w_{d+1}^{\ast}=u_{d+1}^{\ast}-\bar{u}_{d+1}^{\ast}$. Analogously as above, it follows from Corollary \ref{thm86} that
\begin{align}\label{TYU616}
|\nabla w_{d+1}(x)|\leq C,\;\,\,|\nabla w_{d+1}^{\ast}(x)|\leq C,\quad x\in\Omega_{R}^{\ast}.
\end{align}
By definition,
\begin{align*}
\mathcal{A}_{w}=&\int_{\partial D\cap\mathcal{C}_{\varepsilon^{\frac{1}{m}-\bar{\gamma}}}}\bigg\{\lambda\sum^{d}_{i,j=1}\partial_{x_{i}}(w_{d+1}^{i}-w_{d+1}^{\ast i})\nu_{j}\varphi^{j}(x)\\
&+\mu\sum^{d}_{i,j=1}[\partial_{x_{j}}(w_{d+1}^{i}-w_{d+1}^{\ast i})+\partial_{x_{i}}(w_{d+1}^{j}-w_{d+1}^{\ast j})]\nu_{j}\varphi^{i}(x)\bigg\}.
\end{align*}
Then from \eqref{KHA002} and \eqref{TYU616}, we get
\begin{align}\label{ZZ001}
|\mathcal{A}_{w}|\leq\int_{\partial D\cap\mathcal{C}_{\varepsilon^{\frac{1}{m}-\bar{\gamma}}}}C|x'|^{k}\leq C\varepsilon^{(\frac{1}{m}-\bar{\gamma})(d+k-1)}.
\end{align}

We next estimate the second term $\mathcal{A}_{\bar{u}}$. Recalling the definitions of $\mathcal{F}_{d+1}$ and $\mathcal{F}^{\ast}_{d+1}$, we obtain
\begin{align*}
|\nabla(\mathcal{F}_{d+1}-\mathcal{F}_{d+1}^{\ast})|\leq C,
\end{align*}
which reads that
\begin{align*}
\left|\int_{\partial D\cap\mathcal{C}_{\varepsilon^{\frac{1}{m}-\bar{\gamma}}}}\frac{\partial(\mathcal{F}_{d+1}-\mathcal{F}_{d+1}^{\ast})}{\partial\nu_{0}}\Big|_{+}\cdot\varphi(x)\right|\leq C\varepsilon^{(\frac{1}{m}-\bar{\gamma})(d+k-1)}.
\end{align*}
Then
\begin{align*}
\mathcal{A}_{\bar{u}}=&\int_{\partial D\cap\mathcal{C}_{\varepsilon^{\frac{1}{m}-\bar{\gamma}}}}\frac{\partial(\bar{u}_{d+1}-\bar{u}_{d+1}^{\ast})}{\partial\nu_{0}}\Big|_{+}\cdot\varphi(x)\notag\\
=&\int_{\partial D\cap\mathcal{C}_{\varepsilon^{\frac{1}{m}-\bar{\gamma}}}}\left(\frac{\partial(\psi_{d+1}\bar{v}-\psi_{d+1}\bar{v}^{\ast})}{\partial\nu_{0}}\Big|_{+}\cdot\varphi(x)+\frac{\partial(\mathcal{F}_{d+1}-\mathcal{F}_{d+1}^{\ast})}{\partial\nu_{0}}\Big|_{+}\cdot\varphi(x)\right)\notag\\
=&\int_{\partial D\cap\mathcal{C}_{\varepsilon^{\frac{1}{m}-\bar{\gamma}}}}\frac{\partial(\psi_{d+1}\bar{v}-\psi_{d+1}\bar{v}^{\ast})}{\partial\nu_{0}}\Big|_{+}\cdot\varphi(x)+O(1)\varepsilon^{(\frac{1}{m}-\tilde{\gamma})(d+k-1)}.
\end{align*}
Denote
\begin{align*}
\mathcal{A}_{\bar{u}}^{1}=\int_{\partial D\cap\mathcal{C}_{\varepsilon^{\frac{1}{m}-\bar{\gamma}}}}\frac{\partial(\psi_{d+1}\bar{v}-\psi_{d+1}\bar{v}^{\ast})}{\partial\nu_{0}}\Big|_{+}\cdot\varphi(x).
\end{align*}
We further decompose $\mathcal{A}_{\bar{u}}^{1}$ into two parts as follows:
\begin{align*}
\mathcal{A}_{\bar{u}}^{11}=&\int_{\partial D\cap\mathcal{C}_{\varepsilon^{\frac{1}{m}-\bar{\gamma}}}}\sum^{d}_{i=1}\left[\lambda x_{d}\partial_{x_{1}}(\bar{v}-\bar{v}^{\ast})\nu_{i}\varphi^{i}+\mu\partial_{x_{i}}(x_{d}\bar{v}-x_{d}\bar{v}^{\ast})(\nu_{i}\varphi^{1}+\nu_{1}\varphi^{i})\right]\notag\\
&-\int_{\partial D\cap\mathcal{C}_{\varepsilon^{\frac{1}{m}-\bar{\gamma}}}}\sum^{d-1}_{i=1}\left[\lambda x_{1}\partial_{x_{d}}(\bar{v}-\bar{v}^{\ast})\nu_{i}\varphi^{i}+\mu\partial_{x_{i}}(x_{1}\bar{v}-x_{1}\bar{v}^{\ast})(\nu_{i}\varphi^{d}+\nu_{d}\varphi^{i})\right],\notag\\
\mathcal{A}_{\bar{u}}^{12}=&-\int_{\partial D\cap\mathcal{C}_{\varepsilon^{\frac{1}{m}-\bar{\gamma}}}}(\lambda+2\mu) x_{1}\partial_{x_{d}}(\bar{v}-\bar{v}^{\ast})\nu_{d}\varphi^{d}.
\end{align*}
With regard to $\mathcal{A}_{\bar{u}}^{11}$, in view of the fact that $|x_{d}|=|h(x')|\leq C|x'|^{m}$ on $\Gamma_{R}^{-}$, we get that if condition ({\bf{E1}}), ({\bf{E2}}) or ({\bf{E3}}) holds,
\begin{align}\label{LATN001}
|\mathcal{A}_{\bar{u}}^{11}|\leq\int_{\partial D\cap\mathcal{C}_{\varepsilon^{\frac{1}{m}-\bar{\gamma}}}}C|x'|^{k}\leq C\varepsilon^{(\frac{1}{m}-\bar{\gamma})(d+k-1)}.
\end{align}

For the second term $\mathcal{A}_{\bar{u}}^{12}$, we have
\begin{align}\label{HJAN001}
\mathcal{A}_{\bar{u}}^{12}=&
\begin{cases}
O(1)\varepsilon^{(\frac{1}{m}-\bar{\gamma})(d+k-m)},&\text{if condition}\; ({\bf{E2}})\; \text{holds},\;m<d+k,\\
0,&\text{if condition}\;({\bf{E1}})\;\text{or}\;({\bf{E3}})\;\text{holds},
\end{cases}
\end{align}
where we used the fact that the integrand is odd with respect to $x_{1}$ in the case when condition ({\bf{E1}}) or ({\bf{E3}}) holds. Then from \eqref{LKT6.010} and \eqref{ZZ001}--\eqref{HJAN001}, we obtain that

$(a)$ if condition ({\bf{E1}}) or ({\bf{E3}}) holds, by taking $\bar{\gamma}=\frac{m+d+k-2}{m(m+d+k-1)}$, then
\begin{align*}
|Q_{d+1}[\varphi]-Q^{\ast}_{d+1}[\varphi]|\leq C\varepsilon^{\frac{d+k-1}{m(m+d+k-1)}};
\end{align*}

$(b)$ if condition ({\bf{E2}}) holds, by taking $\bar{\gamma}=\frac{d+k-1}{m(d+k)}$, then
\begin{align*}
|Q_{d+1}[\varphi]-Q^{\ast}_{d+1}[\varphi]|\leq C\varepsilon^{\frac{d+k-m}{m(d+k)}}.
\end{align*}

Consequently, we complete the proofs of \eqref{KAT90}--\eqref{AUOP001}.

{\bf Step 3.} Proof of \eqref{TAKL001}. Observe that for $i=1,2,...,d$ and $j=1,...,d-1$, $i\neq j$,

$(\rm{i})$ if condition ({\bf{E1}}) holds, then $\varphi^{i}(x',h(x'))x_{j}$ is odd with respect to $x_{j}$;

$(\rm{ii})$ if condition ({\bf{E2}}) holds, then $\varphi^{i}(x',h(x'))x_{j}=0$ for $i=1,...,d-1$, and $\varphi^{d}(x',h(x'))x_{j}$ is odd with respect to $x_{1}$ for $j=2,...,d-1$, $d\geq3$;

$(\rm{iii})$ if condition ({\bf{E3}}) holds, then $\varphi^{i}(x',h(x'))x_{j}$ is odd with respect to $x_{i}$ for $i=1,...,d-1$, and $\varphi^{d}(x',h(x'))x_{j}=0$.

Then following the same argument as in {\bf Step 2} with a slight modification, we deduce that for $\alpha=d+2,...,\frac{d(d+1)}{2}$, $d\geq3$,
\begin{align*}
|Q_{\alpha}[\varphi]-Q^{\ast}_{\alpha}[\varphi]|\leq C\varepsilon^{\frac{d+k-1}{m(m+d+k-1)}}.
\end{align*}
That is, \eqref{TAKL001} holds.

\end{proof}

\subsection{Estimates and asymptotics of $a_{\alpha\alpha}$, $\alpha=1,2,...,\frac{d(d+1)}{2}$.}\label{subsec32}

Multiplying the first line of \eqref{P2.005} by $u_{\beta}$, it follows from \eqref{Le2.01222} that
\begin{align*}
a_{\alpha\beta}=\int_{\Omega}(\mathbb{C}^{0}e(u_{\alpha}),e(u_{\beta})),\quad\alpha,\beta=1,2,...,\frac{d(d+1)}{2}.
\end{align*}
\begin{lemma}\label{lemmabc}
Assume as above. Then for a sufficiently small $\varepsilon>0$,

$(\rm{i})$ if $\alpha=1,2,...,d$, then
\begin{align}\label{LMC1}
\begin{cases}
\kappa_{2}^{-\frac{d-1}{m}}\lesssim\frac{a_{\alpha\alpha}}{\mathcal{L}_{d}^{\alpha}\rho_{0}(d,m;\varepsilon)}\lesssim\kappa_{1}^{-\frac{d-1}{m}},&m\geq d-1,\\
a_{\alpha\alpha}=a_{\alpha\alpha}^{\ast}+O(\varepsilon^{\min\{\frac{1}{6},\frac{d-1-m}{12m}\}}),&m<d-1;
\end{cases}
\end{align}

$(\rm{ii})$ if $\alpha=d+1,...,\frac{d(d+1)}{2}$, then
\begin{align}\label{LMC}
\begin{cases}
\kappa_{2}^{-\frac{d+1}{m}}\lesssim\frac{a_{\alpha\alpha}}{\mathcal{L}_{d}^{\alpha}\rho_{2}(d,m;\varepsilon)}\lesssim\kappa_{1}^{-\frac{d+1}{m}},&m\geq d+1,\\
a_{\alpha\alpha}=a_{\alpha\alpha}^{\ast}+O(\varepsilon^{\min\{\frac{1}{6},\frac{d+1-m}{12m}\}}),&m<d+1;
\end{cases}
\end{align}

$(\rm{iii})$ if $d=2$, for $\alpha,\beta=1,2,\alpha\neq\beta$, then
\begin{align}\label{LVZQ001}
a_{12}=a_{21}=O(1)|\ln\varepsilon|,
\end{align}
and if $d\geq3$, for $\alpha,\beta=1,2,...,d,\,\alpha\neq\beta$, then
\begin{align}\label{ADzc}
a_{\alpha\beta}=a_{\beta\alpha}=a^{\ast}_{\alpha\beta}+O(1)\varepsilon^{\min\{\frac{1}{6},\frac{d-2}{12m}\}},
\end{align}
and if $d\geq2$, for $\alpha=1,2,...,d,\,\beta=d+1,...,\frac{d(d+1)}{2},$ then
\begin{align}\label{LVZQ0011gdw}
a_{\alpha\beta}=a_{\beta\alpha}=a^{\ast}_{\alpha\beta}+O(1)\varepsilon^{\min\{\frac{1}{6},\frac{d-1}{12m}\}},
\end{align}
and if $d\geq3$, for $\alpha,\beta=d+1,...,\frac{d(d+1)}{2},\,\alpha\neq\beta$, then
\begin{align}\label{LVZQ0011}
a_{\alpha\beta}=a_{\beta\alpha}=a^{\ast}_{\alpha\beta}+O(1)\varepsilon^{\min\{\frac{1}{6},\frac{d}{12m}\}},
\end{align}
where $a_{\alpha\beta}^{\ast}$, $\alpha,\beta=1,2,...,\frac{d(d+1)}{2}$ are defined in \eqref{FNCL001}.
\end{lemma}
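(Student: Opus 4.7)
\medskip

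\noindent\textbf{Proof proposal for Lemma \ref{lemmabc}.}

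The starting point is the energy representation $a_{\alpha\beta}=\int_{\Omega}(\mathbb{C}^{0}e(u_{\alpha}),e(u_{\beta}))\,dx$ obtained from multiplying the equation for $u_{\alpha}$ in \eqref{P2.005} by $u_{\beta}$ and invoking \eqref{Le2.01222}. The plan is to split $\Omega=\Omega_{R}\cup(\Omega\setminus\Omega_{R})$. On the outer piece $\Omega\setminus\Omega_{R}$ the standard elliptic theory (together with Corollary \ref{thm86}) gives $|\nabla u_{\alpha}|\lesssim 1$, so this region contributes only a bounded $O(1)$ to every $a_{\alpha\beta}$. Inside the thin gap $\Omega_{R}$, I will replace $\nabla u_{\alpha}$ by $\nabla\bar{u}_{\alpha}$ using \eqref{ATCG001}; the error in $|\nabla u_{\alpha}-\nabla\bar{u}_{\alpha}|$ is $O(\delta^{(m-2)/m})$ for $\alpha\leq d$ and $O(1)$ for $\alpha\geq d+1$, and Cauchy--Schwarz turns these into lower-order contributions compared with the leading blow-up integrals.

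The main leading-term computation is $\int_{\Omega_{R}}(\mathbb{C}^{0}e(\bar{u}_{\alpha}),e(\bar{u}_{\beta}))\,dx$, carried out using the explicit formulas \eqref{OPQ}--\eqref{OPAKLN01}. For the diagonal case $\alpha=\beta\leq d$, the dominant derivative is $\partial_{x_{d}}\bar{u}_{\alpha}^{\alpha}=\partial_{x_{d}}\bar{v}=1/\delta$. The correction $\mathcal{F}_{\alpha}$ is chosen precisely so that, after the Poisson-type contraction, the only surviving $O(\delta^{-2})$ coefficient in $(\mathbb{C}^{0}e(\bar{u}_{\alpha}),e(\bar{u}_{\alpha}))$ equals $\mathcal{L}_{d}^{\alpha}$; all other quadratic terms are of lower order because they carry an extra factor of $\partial_{x_{i}}\delta=O(|x'|^{m-1})$. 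Thus
\begin{align*}
a_{\alpha\alpha}=\mathcal{L}_{d}^{\alpha}\int_{|x'|<R}\frac{dx'}{\delta(x')}+\text{lower order},
\end{align*}
and inserting $\kappa_{1}|x'|^{m}\leq\delta(x')-\varepsilon\leq\kappa_{2}|x'|^{m}$ from (\textbf{H1}) yields the two-sided bound in \eqref{LMC1} with precisely the $\kappa_{1}^{-(d-1)/m},\kappa_{2}^{-(d-1)/m}$ prefactors and the $\rho_{0}(d,m;\varepsilon)$ scaling whenever $m\geq d-1$. The case $\alpha\geq d+1$ in \eqref{LMC} is identical in spirit, except that $\psi_{\alpha}^{j}=O(|x'|)$ contributes an extra factor $|x'|^{2}$ in the integrand, upgrading $\rho_{0}$ to $\rho_{2}$ and shifting the exponents of $\kappa_{j}$ from $(d-1)/m$ to $(d+1)/m$.

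For the regimes where the energy stays finite ($m<d-1$ in (i), $m<d+1$ in (ii), and the off-diagonal estimates (iii)), I would proceed by comparison with the touching problem \eqref{l03.001}. Exactly as in the proof of Lemma \ref{KM323}, the translation $u_{\alpha}(\cdot)-u_{\alpha}^{\ast}(\cdot\pm\varepsilon e_{d})$ gives a small boundary datum, so the maximum principle for Lam\'{e} systems from \cite{MMN2007} produces $|u_{\alpha}-u_{\alpha}^{\ast}|\lesssim\varepsilon^{1/2}$ outside a cylinder $\mathcal{C}_{\varepsilon^{1/(2m)}}$, and interior/boundary Schauder estimates upgrade this to a gradient bound $|\nabla(u_{\alpha}-u_{\alpha}^{\ast})|\lesssim\varepsilon^{m\bar\gamma}$ on an enlarged cylinder complement. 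Writing
\begin{align*}
a_{\alpha\beta}-a_{\alpha\beta}^{\ast}=\int_{\Omega\cap\mathcal{C}_{\varepsilon^{1/(2m)-\bar\gamma}}}(\mathbb{C}^{0}e(u_{\alpha}),e(u_{\beta}))-\int_{\Omega^{\ast}\cap\mathcal{C}_{\varepsilon^{1/(2m)-\bar\gamma}}}(\mathbb{C}^{0}e(u_{\alpha}^{\ast}),e(u_{\beta}^{\ast}))+\text{outer part},
\end{align*}
the outer part is controlled by $\varepsilon^{m\bar\gamma}$ (exactly as for $\mathcal{A}^{out}$ in Lemma \ref{KM323}), while the inner part is controlled by the measure of a thin cylinder against $|\nabla\bar{u}_{\alpha}|\,|\nabla\bar{u}_{\beta}|$; plugging in the explicit $\bar{u}_{\alpha}$ asymptotics and optimising $\bar\gamma$ produces the indicated rates $\varepsilon^{\min\{1/6,(d-1-m)/(12m)\}}$, $\varepsilon^{\min\{1/6,(d+1-m)/(12m)\}}$, etc. For the off-diagonal bounds in (iii) the same machinery applies, but one first exploits parity of the cross terms: in $(\mathbb{C}^{0}e(\bar{u}_{\alpha}),e(\bar{u}_{\beta}))$ with $\alpha\ne\beta$ the would-be $\delta^{-2}$-terms integrate to zero by oddness in some $x_{j}$, leaving only integrands of order $\delta^{-1}|x'|^{s}$ for suitable $s\geq d-2,d-1,d$ (depending on the index pair), which explains the jump from the logarithmic $|\ln\varepsilon|$ in \eqref{LVZQ001} (pure $\delta^{-1}$ in dimension $d=2$) to convergent integrals in higher dimensions.

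The step I expect to be the main obstacle is the bookkeeping of cancellations in the off-diagonal integrands \eqref{ADzc}--\eqref{LVZQ0011}: the explicit correction $\mathcal{F}_{\alpha}$ contains several $(\lambda+\mu)/\mu$-type factors, and verifying that the product $(\mathbb{C}^{0}e(\bar{u}_{\alpha}),e(\bar{u}_{\beta}))$ has no surviving $\delta^{-2}$ contribution when $\alpha\ne\beta$, and that the residual $\delta^{-1}|x'|^{s}$-terms have the exact exponent $s$ needed to yield a finite limit equal to $a_{\alpha\beta}^{\ast}$ (rather than a spurious $|\ln\varepsilon|$), is delicate. Once this algebraic step is done, the rate estimates follow from the same translation/maximum-principle scheme used above.
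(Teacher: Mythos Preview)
Your overall architecture matches the paper's: energy form of $a_{\alpha\beta}$, explicit computation on $\bar u_{\alpha}$ in the gap, comparison with the touching problem $u_{\alpha}^{\ast}$ via the maximum principle, and parity for the off-diagonal pieces. The leading-term calculation for the blow-up regimes is essentially what the paper does (three-zone split with $\bar\gamma=\tfrac{1}{12m}$, not just $\Omega_{R}$ versus $\Omega\setminus\Omega_{R}$, but that refinement is routine).

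There is, however, a genuine gap in how you propose to get the finite-limit estimates and the exponents $\min\{\tfrac16,\ldots\}$. You say that ``interior/boundary Schauder estimates upgrade $|u_{\alpha}-u_{\alpha}^{\ast}|\lesssim\varepsilon^{1/2}$ to $|\nabla(u_{\alpha}-u_{\alpha}^{\ast})|\lesssim\varepsilon^{m\bar\gamma}$ on an enlarged cylinder complement,'' as in Lemma~\ref{KM323}. But in Lemma~\ref{KM323} that upgrade is applied on $\partial D$, at a \emph{fixed} positive distance from the contact point. Here you need the gradient bound throughout the annulus $\Omega_{R}^{\ast}\setminus\Omega_{\varepsilon^{\bar\gamma}}^{\ast}$, where the domain has thickness only $\sim|x'|^{m}$. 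Applying Schauder on balls of radius $\sim|x'|^{m}$ to the $L^{\infty}$ bound $\varepsilon^{1/2}$ yields $|\nabla(u_{\alpha}-u_{\alpha}^{\ast})|\lesssim\varepsilon^{1/2}|x'|^{-m}$, which for $|x'|\sim\varepsilon^{1/(2m)}$ gives only $O(1)$ and is useless against the factor $|\nabla u_{\beta}^{\ast}|\sim|x'|^{-m}$ in the energy integrand. The paper circumvents this by a rescaling-plus-\emph{interpolation} argument: on each rescaled unit block one has both $\|U_{\alpha}-U_{\alpha}^{\ast}\|_{L^{\infty}}\lesssim\varepsilon^{1/2}$ and a uniform $C^{2}$ bound, and interpolating gives $|\nabla(U_{\alpha}-U_{\alpha}^{\ast})|\lesssim\varepsilon^{1/4}$; scaling back with $|z'|\geq\varepsilon^{1/(12m)}$ yields
\[
|\nabla(u_{\alpha}-u_{\alpha}^{\ast})|\lesssim\varepsilon^{1/4}|z'|^{-m}\leq C\varepsilon^{1/6}\quad\text{in }D\setminus\overline{D_{1}\cup D_{1}^{\ast}\cup\mathcal{C}_{\varepsilon^{1/(12m)}}}.
\]
This is the source of the $\tfrac16$ in every exponent; it is not obtained by ``optimising $\bar\gamma$,'' and without this step your outer-part control fails. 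Once you insert this interpolation estimate, the rest of your outline (including the parity cancellations you flagged) goes through as in the paper.
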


\begin{proof}
{\bf Step 1.} Proof of (\ref{LMC1}). Fix $\bar{\gamma}=\frac{1}{12m}$. For $\alpha=1,2,...,d$, we begin with a decomposition for $a_{\alpha\alpha}$ as follows:
\begin{align}\label{a1111}
a_{\alpha\alpha}=&\int_{\Omega_{\varepsilon^{\bar{\gamma}}}}(\mathbb{C}^{0}e(u_{\alpha}),e(u_{\alpha}))+\int_{\Omega\setminus\Omega_{R}}(\mathbb{C}^{0}e(u_{\alpha}),e(u_{\alpha}))+\int_{\Omega_{R}\setminus\Omega_{\varepsilon^{\bar{\gamma}}}}(\mathbb{C}^{0}e(u_{\alpha}),e(u_{\alpha}))\nonumber\\
=&:\mathrm{I}+\mathrm{II}+\mathrm{III}.
\end{align}
In light of the definitions of $\bar{u}_{\alpha}$ and $\mathbb{C}^{0}$, it follows from a straightforward calculation that
\begin{align*}
(\mathbb{C}^{0}e(\bar{u}_{\alpha}),e(\bar{u}_{\alpha}))=&(\lambda+\mu)(\partial_{x_{\alpha}}\bar{v})^{2}+\mu\sum\limits^{d}_{i=1}(\partial_{x_{i}}\bar{v})^{2}+(\mathbb{C}^{0}e(\mathcal{F}_{\alpha}),e(\mathcal{F}_{\alpha}))\\
&+2(\mathbb{C}^{0}e(\psi_{\alpha}\bar{v}),e(\mathcal{F}_{\alpha})),\quad\alpha=1,2,...,d,
\end{align*}
which, in combination with Corollary \ref{thm86}, gives that
\begin{align}\label{con03365}
\mathrm{I}=&\int_{\Omega_{\varepsilon^{\bar{\gamma}}}}(\mathbb{C}^{0}e(\bar{u}_{\alpha}),e(\bar{u}_{\alpha}))+2\int_{\Omega_{\varepsilon^{\bar{\gamma}}}}(\mathbb{C}^{0}e(u_{\alpha}-\bar{u}_{\alpha}),e(\bar{u}_{\alpha}))\notag\\
&+\int_{\Omega_{\varepsilon^{\bar{\gamma}}}}(\mathbb{C}^{0}e(u_{\alpha}-\bar{u}_{\alpha}),e(u_{\alpha}-\bar{u}_{\alpha}))\notag\\
=&\,\mathcal{L}_{d}^{\alpha}\int_{|x'|<\varepsilon^{\bar{\gamma}}}\frac{dx'}{\varepsilon+h_{1}(x')-h(x')}+O(1)\varepsilon^{(d-1)\bar{\gamma}},
\end{align}
where $\mathcal{F}_{\alpha}$ is defined in \eqref{OPAKLN01} and $\mathcal{L}_{d}^{\alpha}$ is defined by \eqref{AZ}--\eqref{AZ110}.

To estimate the latter two terms, we first analyze the difference $|\nabla(u_{\alpha}-u_{\alpha}^{\ast})|$ in $D\setminus\big(\overline{D_{1}\cup D_{1}^{\ast}\cup\mathcal{C}_{\varepsilon^{\bar{\gamma}}}}\big)$. For $\varepsilon^{\bar{\gamma}}\leq|z'|\leq R$, by carrying out a change of variable as follows:
\begin{align*}
\begin{cases}
x'-z'=|z'|^{m}y',\\
x_{d}=|z'|^{m}y_{d},
\end{cases}
\end{align*}
we rescale $\Omega_{|z'|+|z'|^{m}}\setminus\Omega_{|z'|}$ and $\Omega_{|z'|+|z'|^{m}}^{\ast}\setminus\Omega_{|z'|}^{\ast}$ into $Q_{1}$ and $Q_{1}^{\ast}$ of nearly unit-size squares (or cylinders), respectively. Denote
\begin{align*}
U_{\alpha}(y):=u_{\alpha}(z'+|z'|^{m}y',|z'|^{m}y_{d}),\quad\mathrm{in}\;Q_{1},
\end{align*}
and
\begin{align*}
U^{\ast}_{\alpha}(y):=u^{\ast}_{\alpha}(z'+|z'|^{m}y',|z'|^{m}y_{d}),\quad\mathrm{in}\;Q_{1}^{\ast}.
\end{align*}
Then we conclude from the standard interior and boundary estimates of elliptic systems that
\begin{align*}
|\nabla^{2}U_{\alpha}|\leq C,\quad\mathrm{in}\;Q_{1},\quad\mathrm{and}\;|\nabla^{2}U^{\ast}_{\alpha}|\leq C,\quad\mathrm{in}\;Q_{1}^{\ast}.
\end{align*}
By utilizing an interpolation with \eqref{LMZQT001}, we get
\begin{align*}
|\nabla(U_{\alpha}-U^{\ast}_{\alpha})|\leq C\varepsilon^{\frac{1}{2}(1-\frac{1}{2})}\leq C\varepsilon^{\frac{1}{4}}.
\end{align*}
Then rescaling it back to $u_{\alpha}-u_{\alpha}^{\ast}$ and in view of $\varepsilon^{\bar{\gamma}}\leq|z'|\leq R$, we have
\begin{align*}
|\nabla(u_{\alpha}-u_{\alpha}^{\ast})(x)|\leq C\varepsilon^{\frac{1}{4}}|z'|^{-m}\leq C\varepsilon^{\frac{1}{6}},\quad x\in\Omega^{\ast}_{|z'|+|z'|^{m}}\setminus\Omega_{|z'|}^{\ast},
\end{align*}
which implies that for $\alpha=1,2,...,d$,
\begin{align}\label{con035}
|\nabla(u_{\alpha}-u_{\alpha}^{\ast})|\leq C\varepsilon^{\frac{1}{6}},\quad\;\,\mathrm{in}\;\,D\setminus\big(\overline{D_{1}\cup D_{1}^{\ast}\cup\mathcal{C}_{\varepsilon^{\bar{\gamma}}}}\big).
\end{align}
Then from \eqref{con035}, we obtain
\begin{align}\label{KKAA123}
\mathrm{II}=&\int_{D\setminus(D_{1}\cup D_{1}^{\ast}\cup\Omega_{R})}(\mathbb{C}^{0}e(u_{\alpha}),e(u_{\alpha}))+O(1)\varepsilon\notag\\
=&\int_{D\setminus(D_{1}\cup D_{1}^{\ast}\cup\Omega_{R})}\left((\mathbb{C}^{0}e(u_{\alpha}^{\ast}),e(u_{\alpha}^{\ast}))+2(\mathbb{C}^{0}e(u_{\alpha}-u_{\alpha}^{\ast}),e(u_{\alpha}^{\ast}))\right)\notag\\
&+\int_{D\setminus(D_{1}\cup D_{1}^{\ast}\cup\Omega_{R})}(\mathbb{C}^{0}e(u_{\alpha}-u_{\alpha}^{\ast}),e(u_{\alpha}-u_{\alpha}^{\ast}))+O(1)\varepsilon\notag\\
=&\int_{\Omega^{\ast}\setminus\Omega^{\ast}_{R}}(\mathbb{C}^{0}e(u_{\alpha}^{\ast}),e(u_{\alpha}^{\ast}))+O(1)\varepsilon^{\frac{1}{6}},
\end{align}
where we used the fact that $|\nabla u_{\alpha}|$ remains bounded in $(D_{1}^{\ast}\setminus(D_{1}\cup\Omega_{R}))\cup(D_{1}\setminus D_{1}^{\ast})$ and the volumes of $D_{1}^{\ast}\setminus(D_{1}\cup\Omega_{R})$ and $D_{1}\setminus D_{1}^{\ast}$ are of order $O(\varepsilon)$.

With regard to the last term $\mathrm{III}$ of \eqref{a1111}, we first decompose it as follows:
\begin{align*}
\mathrm{III}_{1}=&\int_{(\Omega_{R}\setminus\Omega_{\varepsilon^{\bar{\gamma}}})\setminus(\Omega^{\ast}_{R}\setminus\Omega^{\ast}_{\varepsilon^{\bar{\gamma}}})}(\mathbb{C}^{0}e(u_{\alpha}),e(u_{\alpha})),\\
\mathrm{III}_{2}=&\int_{\Omega^{\ast}_{R}\setminus\Omega^{\ast}_{\varepsilon^{\bar{\gamma}}}}(\mathbb{C}^{0}e(u_{\alpha}-u_{\alpha}^{\ast}),e(u_{\alpha}-u^{\ast}_{\alpha}))+2\int_{\Omega^{\ast}_{R}\setminus\Omega^{\ast}_{\varepsilon^{\bar{\gamma}}}}(\mathbb{C}^{0}e(u_{\alpha}-u_{\alpha}^{\ast}),e(u_{\alpha}^{\ast})),\\
\mathrm{III}_{3}=&\int_{\Omega^{\ast}_{R}\setminus\Omega^{\ast}_{\varepsilon^{\bar{\gamma}}}}(\mathbb{C}^{0}e(u_{\alpha}^{\ast}),e(u_{\alpha}^{\ast})).
\end{align*}
Since the thickness of $(\Omega_{R}\setminus\Omega_{\varepsilon^{\bar{\gamma}}})\setminus(\Omega^{\ast}_{R}\setminus\Omega^{\ast}_{\varepsilon^{\bar{\gamma}}})$ is $\varepsilon$, then we deduce from Corollary \ref{thm86} that
\begin{align}\label{con0333355}
|\mathrm{III}_{1}|\leq\,C\varepsilon\int_{\varepsilon^{\bar{\gamma}}<|x'|<R}\frac{dx'}{|x'|^{2m}}\leq &C
\begin{cases}
\varepsilon,&m<\frac{d-1}{2},\\
\varepsilon|\ln\varepsilon|,&m=\frac{d-1}{2},\\
\varepsilon^{\frac{10m+d-1}{12m}},&m>\frac{d-1}{2}.
\end{cases}
\end{align}

On the other hand, a consequence of \eqref{Lw3.005} and \eqref{con035} gives
\begin{align}\label{con036666}
|\mathrm{III}_{2}|\leq\,C\varepsilon^{\frac{1}{6}}.
\end{align}
Using \eqref{Lw3.005} again, we derive
\begin{align}\label{GNAT001}
\mathrm{III}_{3}=&\int_{\Omega_{R}^{\ast}\setminus\Omega^{\ast}_{\varepsilon^{\bar{\gamma}}}}(\mathbb{C}^{0}e(\bar{u}_{\alpha}^{\ast}),e(\bar{u}_{\alpha}^{\ast}))+2\int_{\Omega_{R}^{\ast}\setminus\Omega^{\ast}_{\varepsilon^{\bar{\gamma}}}}(\mathbb{C}^{0}e(u_{\alpha}^{\ast}-\bar{u}_{\alpha}^{\ast}),e(\bar{u}_{\alpha}^{\ast}))\notag\\
&+\int_{\Omega_{R}^{\ast}\setminus\Omega^{\ast}_{\varepsilon^{\bar{\gamma}}}}(\mathbb{C}^{0}e(u_{\alpha}^{\ast}-\bar{u}_{\alpha}^{\ast}),e(u_{\alpha}^{\ast}-\bar{u}_{\alpha}^{\ast}))\notag\\
=&\,\mathcal{L}_{d}^{\alpha}\int_{\varepsilon^{\bar{\gamma}}<|x'|<R}\frac{dx'}{h_{1}-h}-\int_{\Omega^{\ast}\setminus\Omega^{\ast}_{R}}(\mathbb{C}^{0}e(u_{\alpha}^{\ast}),e(u_{\alpha}^{\ast}))\notag\\
&+M_{d}^{\ast\alpha}+O(1)\varepsilon^{\min\{\frac{1}{6},\frac{d-1}{12 m}\}},
\end{align}
where
\begin{align}\label{LTFA001}
M_{d}^{\ast\alpha}=&\int_{\Omega^{\ast}\setminus\Omega^{\ast}_{R}}(\mathbb{C}^{0}e(\bar{u}_{\alpha}^{\ast}),e(\bar{u}_{\alpha}^{\ast}))+\int_{\Omega_{R}^{\ast}}(\mathbb{C}^{0}e(u_{\alpha}^{\ast}-\bar{u}_{\alpha}^{\ast}),e(u_{\alpha}^{\ast}-\bar{u}_{\alpha}^{\ast}))\notag\\
&+\int_{\Omega_{R}^{\ast}}\big[2(\mathbb{C}^{0}e(u_{\alpha}^{\ast}-\bar{u}_{\alpha}^{\ast}),e(\bar{u}_{\alpha}^{\ast}))+2(\mathbb{C}^{0}e(\psi_{\alpha}\bar{v}^{\ast}),e(\mathcal{F}_{\alpha}^{\ast}))+(\mathbb{C}^{0}e(\mathcal{F}_{\alpha}^{\ast}),e(\mathcal{F}_{\alpha}^{\ast}))\big]\notag\\
&+
\begin{cases}
\int_{\Omega_{R}^{\ast}}(\lambda+\mu)(\partial_{x_{\alpha}}\bar{v}^{\ast})^{2}+\mu\sum\limits^{d-1}_{i=1}(\partial_{x_{i}}\bar{v}^{\ast})^{2},&\alpha=1,...,d-1,\\
\int_{\Omega_{R}^{\ast}}\mu\sum\limits^{d-1}_{i=1}(\partial_{x_{i}}\bar{v}^{\ast})^{2},&\alpha=d.
\end{cases}
\end{align}
Then from \eqref{con03365} and \eqref{KKAA123}--\eqref{GNAT001}, we get
\begin{align}\label{ZPH001}
a_{\alpha\alpha}=&\mathcal{L}_{d}^{\alpha}\left(\int_{\varepsilon^{\bar{\gamma}}<|x'|<R}\frac{dx'}{h_{1}(x')-h(x')}+\int_{|x'|<\varepsilon^{\bar{\gamma}}}\frac{dx'}{\varepsilon+h_{1}(x')-h(x')}\right)\nonumber\\
&+M_{d}^{\ast\alpha}+O(1)\varepsilon^{\min\{\frac{1}{6},\frac{d-1}{12m}\}}.
\end{align}
We now divide into two cases to calculate $a_{\alpha\alpha}$ as follows:

$(\rm{i})$ if $m\geq d-1$, then
\begin{align*}
&\int_{\varepsilon^{\bar{\gamma}}<|x'|<R}\frac{1}{h_{1}-h}+\int_{|x'|<\varepsilon^{\bar{\gamma}}}\frac{1}{\varepsilon+h_{1}-h}\notag\\
=&\int_{|x'|<R}\frac{1}{\varepsilon+h_{1}-h}+\int_{\varepsilon^{\bar{\gamma}}<|x'|<R}\frac{\varepsilon}{(h_{1}-h)(\varepsilon+h_{1}-h)}\notag\\
=&\int_{|x'|<R}\frac{1}{\varepsilon+h_{1}-h}+O(1)
\begin{cases}
\varepsilon,&m<\frac{d-1}{2},\\
\varepsilon|\ln\varepsilon|,&m=\frac{d-1}{2},\\
\varepsilon^{\frac{10m+d-1}{12m}},&m>\frac{d-1}{2},
\end{cases}
\end{align*}
which implies that
\begin{align}\label{LNQ001}
a_{\alpha\alpha}\lesssim&\mathcal{L}_{d}^{\alpha}\int_{|x'|<R}\frac{1}{\varepsilon+h_{1}-h}\lesssim\mathcal{L}_{d}^{\alpha}\int_{|x'|<R}\frac{1}{\varepsilon+\kappa_{1}|x'|^{m}}\notag\\
\lesssim&\mathcal{L}_{d}^{\alpha}\int_{0}^{R}\frac{s^{d-2}}{\varepsilon+\kappa_{1}s^{m}}\lesssim\frac{\mathcal{L}_{d}^{\alpha}}{\kappa_{1}^{\frac{d-1}{m}}}\rho_{0}(d,m;\varepsilon),
\end{align}
and
\begin{align}\label{LAU001}
a_{\alpha\alpha}\gtrsim&\mathcal{L}_{d}^{\alpha}\int_{|x'|<R}\frac{1}{\varepsilon+\kappa_{2}|x'|^{m}}\gtrsim\frac{\mathcal{L}_{d}^{\alpha}}{\kappa_{2}^{\frac{d-1}{m}}}\rho_{0}(d,m;\varepsilon);
\end{align}

$(\rm{ii})$ if $m<d-1$, then
\begin{align}\label{LNQ002}
a_{\alpha\alpha}=&\mathcal{L}_{d}^{\alpha}\left(\int_{|x'|<R}\frac{dx'}{h_{1}-h}-\int_{|x'|<\varepsilon^{\bar{\gamma}}}\frac{\varepsilon\,dx'}{(h_{1}-h)(\varepsilon+h_{1}-h)}\right)\notag\\
&+M_{d}^{\ast\alpha}+O(1)\varepsilon^{\min\{\frac{1}{6},\frac{d-1}{12m}\}}\notag\\
=&\mathcal{L}_{d}^{\alpha}\int_{\Omega_{R}^{\ast}}|\partial_{x_{d}}\bar{u}_{\alpha}^{\ast}|^{2}+M_{d}^{\ast\alpha}+O(1)\varepsilon^{\min\{\frac{1}{6},\frac{d-1-m}{12m}\}}\notag\\
=&a_{\alpha\alpha}^{\ast}+O(1)\varepsilon^{\min\{\frac{1}{6},\frac{d-1-m}{12m}\}}.
\end{align}
Consequently, substituting \eqref{LNQ001}--\eqref{LNQ002} into \eqref{ZPH001}, we complete the proof of \eqref{LMC1}.

{\bf Step 2.} Proof of \eqref{LMC}. According to the definition of $\psi_{\alpha}$, we see that for $d+1\leq\alpha\leq\frac{d(d+1)}{2}$, there exist two indices $1\leq i_{\alpha}<j_{\alpha}\leq d$ such that
$\psi_{\alpha}=(0,...,0,x_{j_{\alpha}},0,...,0,-x_{i_{\alpha}},0,...,0)$. Especially when $d+1\leq\alpha\leq2d-1$, we have $i_{\alpha}=\alpha-d,\,j_{\alpha}=d$ and thus $\psi_{\alpha}=(0,...,0,x_{d},0,...,0,-x_{\alpha-d})$. Similar to \eqref{a1111}, for $\alpha=d+1,...,\frac{d(d+1)}{2}$, we split $a_{\alpha\alpha}$ as follows:
\begin{align*}
\mathrm{I}=&\int_{\Omega_{\varepsilon^{\bar{\gamma}}}}(\mathbb{C}^{0}e(u_{\alpha}),e(u_{\alpha})),\\
\mathrm{II}=&\int_{\Omega\setminus\Omega_{R}}(\mathbb{C}^{0}e(u_{\alpha}),e(u_{\alpha})),\\
\mathrm{III}=&\int_{\Omega_{R}\setminus\Omega_{\varepsilon^{\bar{\gamma}}}}(\mathbb{C}^{0}e(u_{\alpha}),e(u_{\alpha})),
\end{align*}
where $\bar{\gamma}=\frac{1}{12m}$. A direct computation shows that for $\alpha=d+1,...,\frac{d(d+1)}{2}$,
\begin{align*}
(\mathbb{C}^{0}e(\bar{u}_{\alpha}),e(\bar{u}_{\alpha}))=&\mu(x_{i_{\alpha}}^{2}+x_{j_{\alpha}}^{2})\sum^{d}_{k=1}(\partial_{x_{k}}\bar{v})^{2}+(\lambda+\mu)(x_{j_{\alpha}}\partial_{x_{i_{\alpha}}}\bar{v}-x_{i_{\alpha}}\partial_{x_{j_{\alpha}}}\bar{v})^{2}\notag\\
&+2(\mathbb{C}^{0}e(\psi_{\alpha}\bar{v}),e(\mathcal{F}_{\alpha}))+(\mathbb{C}^{0}e(\mathcal{F}_{\alpha}),e(\mathcal{F}_{\alpha})),
\end{align*}
where the correction term $\mathcal{F}_{\alpha}$ is defined by \eqref{OPAKLN01}. This, together with Corollary \ref{thm86}, reads that
\begin{align}\label{coJKL5}
\mathrm{I}
=&\frac{\mathcal{L}_{d}^{\alpha}}{d-1}\int_{|x'|<\varepsilon^{\bar{\gamma}}}\frac{|x'|^{2}}{\varepsilon+h_{1}(x')-h(x')}\,dx'+O(1)\varepsilon^{\frac{d-1}{12m}},
\end{align}
where $\mathcal{L}_{d}^{\alpha}$ is defined in (\ref{AZ})--(\ref{AZ110}).

Observe that by applying \eqref{LKT6.007}--\eqref{LKT6.009} with $\gamma=\frac{1}{2(m-1)}$, we get that for $\alpha=d+1,...,\frac{d(d+1)}{2}$,
\begin{align}\label{QMAJ001}
|u_{\alpha}-u_{\alpha}^{\ast}|\leq C\varepsilon^{\frac{1}{2}},\quad\;\,\mathrm{in}\;\,D\setminus\big(\overline{D_{1}\cup D_{1}^{\ast}\cup\mathcal{C}_{\varepsilon^{\frac{1}{2(m-1)}}}}\big).
\end{align}
Proceeding as before, making use of \eqref{QMAJ001}, the rescale argument, the interpolation inequality and the standard elliptic estimates, we deduce that for $\alpha=d+1,...,\frac{d(d+1)}{2}$,
\begin{align}\label{KKTN}
|\nabla(u_{\alpha}-u_{\alpha}^{\ast})|\leq C\varepsilon^{\frac{1}{6}},\quad\;\,\mathrm{in}\;\,D\setminus\big(\overline{D_{1}\cup D_{1}^{\ast}\cup\mathcal{C}_{\varepsilon^{\bar{\gamma}}}}\big).
\end{align}
Following the same argument as in \eqref{KKAA123}, we deduce from \eqref{KKTN} that for $\alpha=d+1,...,\frac{d(d+1)}{2}$,
\begin{align}\label{KHT01}
\mathrm{II}=&\int_{\Omega^{\ast}\setminus\Omega^{\ast}_{R}}(\mathbb{C}^{0}e(u^{\ast}_{\alpha}),e(u_{\alpha}^{\ast}))+O(1)\varepsilon^{\frac{1}{6}}.
\end{align}

With regard to the last term $\mathrm{III}$, similarly as before, we split it as follows:
\begin{align*}
\mathrm{III}=&\int_{(\Omega_{R}\setminus\Omega_{\varepsilon^{\bar{\gamma}}})\setminus(\Omega^{\ast}_{R}\setminus\Omega^{\ast}_{\varepsilon^{\bar{\gamma}}})}(\mathbb{C}^{0}e(u_{\alpha}),e(u_{\alpha}))+\int_{\Omega_{R}^{\ast}\setminus\Omega^{\ast}_{\varepsilon^{\bar{\gamma}}}}(\mathbb{C}^{0}e(\bar{u}_{\alpha}^{\ast}),e(\bar{u}_{\alpha}^{\ast}))\notag\\
&+2\int_{\Omega^{\ast}_{R}\setminus\Omega^{\ast}_{\varepsilon^{\bar{\gamma}}}}\left[(\mathbb{C}^{0}e(u_{\alpha}-u_{\alpha}^{\ast}),e(u_{\alpha}^{\ast}))+(\mathbb{C}^{0}e(u_{\alpha}^{\ast}-\bar{u}_{\alpha}^{\ast}),e(\bar{u}_{\alpha}^{\ast}))\right]\notag\\
&+\int_{\Omega^{\ast}_{R}\setminus\Omega^{\ast}_{\varepsilon^{\bar{\gamma}}}}\left[(\mathbb{C}^{0}e(u_{\alpha}-u_{\alpha}^{\ast}),e(u_{\alpha}-u_{\alpha}^{\ast}))+(\mathbb{C}^{0}e(u_{\alpha}^{\ast}-\bar{u}_{\alpha}^{\ast}),e(u_{\alpha}^{\ast}-\bar{u}_{\alpha}^{\ast}))\right].
\end{align*}
Analogously as above, in light of the fact that the thickness of $(\Omega_{R}\setminus\Omega_{\varepsilon^{\bar{\gamma}}})\setminus(\Omega^{\ast}_{R}\setminus\Omega^{\ast}_{\varepsilon^{\bar{\gamma}}})$ is $\varepsilon$, we deduce from \eqref{LKT6.005} and \eqref{KKTN} that
\begin{align}\label{QPQIJKL01}
\mathrm{III}=&\,\frac{\mathcal{L}_{d}^{\alpha}}{d-1}\int_{\varepsilon^{\bar{\gamma}}<|x'|<R}\frac{|x'|^{2}}{h_{1}(x')-h(x')}dx'-\int_{\Omega^{\ast}\setminus\Omega^{\ast}_{R}}(\mathbb{C}^{0}e(u_{\alpha}^{\ast}),e(u_{\alpha}^{\ast}))\notag\\
&+M_{d}^{\ast\alpha}+O(1)\varepsilon^{\min\{\frac{1}{6},\frac{d-1}{12 m}\}},
\end{align}
where
\begin{align*}
M_{d}^{\ast\alpha}=&\int_{\Omega^{\ast}\setminus\Omega^{\ast}_{R}}(\mathbb{C}^{0}e(\bar{u}_{\alpha}^{\ast}),e(\bar{u}_{\alpha}^{\ast}))+\int_{\Omega_{R}^{\ast}}(\mathbb{C}^{0}e(u_{\alpha}^{\ast}-\bar{u}_{\alpha}^{\ast}),e(u_{\alpha}^{\ast}-\bar{u}_{\alpha}^{\ast}))\notag\\
&+\int_{\Omega_{R}^{\ast}}\big[2(\mathbb{C}^{0}e(u_{\alpha}^{\ast}-\bar{u}_{\alpha}^{\ast}),e(\bar{u}_{\alpha}^{\ast}))+2(\mathbb{C}^{0}e(\psi_{\alpha}\bar{v}^{\ast}),e(\mathcal{F}_{\alpha}^{\ast}))+(\mathbb{C}^{0}e(\mathcal{F}_{\alpha}^{\ast}),e(\mathcal{F}_{\alpha}^{\ast}))\big]\\
&+
\begin{cases}
\int_{\Omega_{R}^{\ast}}\big[\mu(x_{\alpha-d}^{2}+x_{d}^{2})\sum\limits^{d-1}_{k=1}(\partial_{x_{k}}\bar{v}^{\ast})^{2}+\mu(x_{d}\partial_{x_{d}}\bar{v}^{\ast})^{2}+(\lambda+\mu)(x_{d}\partial_{x_{\alpha-d}}\bar{v}^{\ast})^{2}\notag\\
\quad\quad\;-2(\lambda+\mu)x_{\alpha-d}x_{d}\partial_{x_{\alpha-d}}\bar{v}^{\ast}\partial_{x_{d}}\bar{v}^{\ast}\big],\;\quad\alpha=d+1,...,2d-1,\\
\int_{\Omega_{R}^{\ast}}\big[\mu(x_{i_{\alpha}}^{2}+x_{j_{\alpha}}^{2})\sum\limits^{d-1}_{k=1}(\partial_{x_{k}}\bar{v}^{\ast})^{2}\notag\\
\quad\quad\;+(\lambda+\mu)(x_{j_{\alpha}}\partial_{x_{i_{\alpha}}}\bar{v}^{\ast}-x_{i_{\alpha}}\partial_{x_{j_{\alpha}}}\bar{v}^{\ast})^{2}\big],\;\quad\alpha=2d,...,\frac{d(d+1)}{2},\,d\geq3.
\end{cases}
\end{align*}
Then from \eqref{coJKL5} and \eqref{KHT01}--\eqref{QPQIJKL01}, we get that

$(a)$ if $m\geq d+1$, then
\begin{align}\label{QZH001}
a_{\alpha\alpha}=&\frac{\mathcal{L}_{d}^{\alpha}}{d-1}\left(\int_{\varepsilon^{\bar{\gamma}}<|x'|<R}\frac{|x'|^{2}}{h_{1}(x')-h(x')}+\int_{|x'|<\varepsilon^{\bar{\gamma}}}\frac{|x'|^{2}}{\varepsilon+h_{1}(x')-h(x')}\right)\nonumber\\
&+M_{d}^{\ast\alpha}+O(1)\varepsilon^{\min\{\frac{1}{6},\frac{d-1}{12 m}\}},\quad\alpha=d+1,...,\frac{d(d+1)}{2}.
\end{align}
Note that
\begin{align}\label{AGNH001}
&\int_{\varepsilon^{\bar{\gamma}}<|x'|<R}\frac{|x'|^{2}}{h_{1}-h}+\int_{|x'|<\varepsilon^{\bar{\gamma}}}\frac{|x'|^{2}}{\varepsilon+h_{1}-h}\notag\\
=&\int_{|x'|<R}\frac{|x'|^{2}}{\varepsilon+h_{1}-h}+\int_{\varepsilon^{\bar{\gamma}}<|x'|<R}\frac{\varepsilon|x'|^{2}}{(h_{1}-h)(\varepsilon+h_{1}-h)}\notag\\
=&\int_{|x'|<R}\frac{|x'|^{2}}{\varepsilon+h_{1}-h}+O(1)
\begin{cases}
\varepsilon,&m<\frac{d+1}{2},\\
\varepsilon|\ln\varepsilon|,&m=\frac{d+1}{2},\\
\varepsilon^{\frac{10m+d+1}{12m}},&m>\frac{d+1}{2}.
\end{cases}
\end{align}
Then substituting \eqref{AGNH001} into \eqref{QZH001}, we get
\begin{align*}
a_{\alpha\alpha}\lesssim&\frac{\mathcal{L}_{d}^{\alpha}}{d-1}\int_{|x'|<R}\frac{|x'|^{2}}{\varepsilon+h_{1}(x')-h(x')}\lesssim\frac{\mathcal{L}_{d}^{\alpha}}{d-1}\int_{|x'|<R}\frac{|x'|^{2}}{\varepsilon+\kappa_{1}|x'|^{m}}\notag\\
\lesssim&\mathcal{L}_{d}^{\alpha}\int_{0}^{R}\frac{s^{d}}{\varepsilon+\kappa_{1}s^{m}}\lesssim\frac{\mathcal{L}_{d}^{\alpha}}{\kappa_{1}^{\frac{d+1}{m}}}\rho_{2}(d,m;\varepsilon),
\end{align*}
and
\begin{align*}
a_{\alpha\alpha}\gtrsim&\frac{\mathcal{L}_{d}^{\alpha}}{d-1}\int_{|x'|<R}\frac{|x'|^{2}}{\varepsilon+\kappa_{2}|x'|^{m}}\gtrsim\frac{\mathcal{L}_{d}^{\alpha}}{\kappa_{2}^{\frac{d+1}{m}}}\rho_{2}(d,m;\varepsilon);
\end{align*}

$(b)$ if $m<d+1$, for $\alpha=d+1,...,2d-1$, we obtain
\begin{align*}
a_{\alpha\alpha}=&\mathcal{L}_{d}^{\alpha}\left(\int_{\varepsilon^{\bar{\gamma}}<|x'|<R}\frac{x_{\alpha-d}^{2}}{h_{1}-h}+\int_{|x'|<\varepsilon^{\bar{\gamma}}}\frac{x_{\alpha-d}^{2}}{\varepsilon+h_{1}-h}\right)+M_{d}^{\ast\alpha}+O(1)\varepsilon^{\min\{\frac{1}{6},\frac{d-1}{12 m}\}}\notag\\
=&\mathcal{L}_{d}^{\alpha}\left(\int_{|x'|<R}\frac{x_{\alpha-d}^{2}}{h_{1}-h}-\int_{|x'|<\varepsilon^{\bar{\gamma}}}\frac{\varepsilon x_{\alpha-d}^{2}}{(h_{1}-h)(\varepsilon+h_{1}-h)}\right)\notag\\
&+M_{d}^{\ast\alpha}+O(1)\varepsilon^{\min\{\frac{1}{6},\frac{d-1}{12 m}\}}\notag\\
=&\mathcal{L}_{d}^{\alpha}\int_{\Omega_{R}^{\ast}}|x_{\alpha-d}\partial_{x_{d}}\bar{v}^{\ast}|^{2}+M_{d}^{\ast\alpha}+O(1)\varepsilon^{\min\{\frac{1}{6},\frac{d+1-m}{12 m}\}}\notag\\
=&a_{\alpha\alpha}^{\ast}+O(1)\varepsilon^{\min\{\frac{1}{6},\frac{d+1-m}{12 m}\}},
\end{align*}
while, for $\alpha=2d,...,\frac{d(d+1)}{2},\,d\geq3$,
\begin{align*}
a_{\alpha\alpha}=&\frac{\mathcal{L}_{d}^{\alpha}}{2}\left(\int_{\varepsilon^{\bar{\gamma}}<|x'|<R}\frac{x_{i_{\alpha}}^{2}+x_{j_{\alpha}}^{2}}{h_{1}-h}+\int_{|x'|<\varepsilon^{\bar{\gamma}}}\frac{x_{i_{\alpha}}^{2}+x_{j_{\alpha}}^{2}}{\varepsilon+h_{1}-h}\right)\notag\\
&+M_{d}^{\ast\alpha}+O(1)\varepsilon^{\min\{\frac{1}{6},\frac{d-1}{12m}\}}\\
=&\frac{\mathcal{L}_{d}^{\alpha}}{2}\int_{|x'|<R}\frac{x_{i_{\alpha}}^{2}+x_{j_{\alpha}}^{2}}{h_{1}-h}+M_{d}^{\ast\alpha}+O(1)\varepsilon^{\min\{\frac{1}{6},\frac{d+1-m}{12m}\}}\\
=&\frac{\mathcal{L}_{d}^{\alpha}}{2}\int_{\Omega_{R}^{\ast}}(x_{i_{\alpha}}^{2}+x_{j_{\alpha}}^{2})|\partial_{x_{d}}\bar{v}^{\ast}|^{2}+M_{d}^{\ast\alpha}+O(1)\varepsilon^{\min\{\frac{1}{6},\frac{d+1-m}{12m}\}}\\
=&a_{\alpha\alpha}^{\ast}+O(1)\varepsilon^{\min\{\frac{1}{6},\frac{d+1-m}{12 m}\}}.
\end{align*}

Hence, \eqref{LMC} is established by combining the results above.

{\bf Step 3.} Proofs of \eqref{LVZQ001}--\eqref{LVZQ0011}. Set $\bar{\gamma}=\frac{1}{12m}$ again. By symmetry, we only need to calculate  $a_{\alpha\beta}$ under the case of $\alpha<\beta$. Analogously as above, for $\alpha,\beta=1,2,...,\frac{d(d+1)}{2}$, $\alpha\neq\beta,$ the element $a_{\alpha\beta}$ can be decomposed into three parts as follows:
\begin{align*}
a_{\alpha\beta}=&\int_{\Omega\setminus\Omega_{R}}(\mathbb{C}^{0}e(u_{\alpha}),e(u_{\beta}))+\int_{\Omega_{R}\setminus\Omega_{\varepsilon^{\bar{\gamma}}}}(\mathbb{C}^{0}e(u_{\alpha}),e(u_{\beta}))+\int_{\Omega_{\varepsilon^{\bar{\gamma}}}}(\mathbb{C}^{0}e(u_{\alpha}),e(u_{\beta}))\nonumber\\
=&:\mathrm{I}+\mathrm{II}+\mathrm{III}.
\end{align*}
First of all, similar to \eqref{KKAA123}, we get
\begin{align}\label{KKAA1233333}
\mathrm{I}=&\int_{D\setminus(D_{1}\cup D_{1}^{\ast}\cup\Omega_{R})}(\mathbb{C}^{0}e(u_{\alpha}),e(u_{\beta}))+O(1)\varepsilon\notag\\
=&\int_{D\setminus(D_{1}\cup D_{1}^{\ast}\cup\Omega_{R})}\big[(\mathbb{C}^{0}e(u_{\alpha}^{\ast}),e(u_{\beta}^{\ast}))+(\mathbb{C}^{0}e(u_{\alpha}-u_{\alpha}^{\ast}),e(u_{\beta}-u_{\beta}^{\ast}))\big]\notag\\
&+\int_{D\setminus(D_{1}\cup D_{1}^{\ast}\cup\Omega_{R})}\big[(\mathbb{C}^{0}e(u_{\alpha}^{\ast}),e(u_{\beta}-u_{\beta}^{\ast}))+(\mathbb{C}^{0}e(u_{\alpha}-u_{\alpha}^{\ast}),e(u_{\beta}^{\ast}))\big]\notag\\
=&\int_{\Omega^{\ast}\setminus\Omega^{\ast}_{R}}(\mathbb{C}^{0}e(u_{\alpha}^{\ast}),e(u_{\beta}^{\ast}))+O(1)\varepsilon^{\frac{1}{6}}.
\end{align}

With regard to the second term $\mathrm{II}$, we further decompose it as follows:
\begin{align*}
\mathrm{II}_{1}=&\int_{(\Omega_{R}\setminus\Omega_{\varepsilon^{\bar{\gamma}}})\setminus(\Omega^{\ast}_{R}\setminus\Omega^{\ast}_{\varepsilon^{\bar{\gamma}}})}(\mathbb{C}^{0}e(u_{\alpha}),e(u_{\beta}))+\int_{\Omega^{\ast}_{R}\setminus\Omega^{\ast}_{\varepsilon^{\bar{\gamma}}}}(\mathbb{C}^{0}e(u_{\alpha}-u_{\alpha}^{\ast}),e(u_{\beta}-u_{\beta}^{\ast}))\notag\\
&+\int_{\Omega^{\ast}_{R}\setminus\Omega^{\ast}_{\varepsilon^{\bar{\gamma}}}}(\mathbb{C}^{0}e(u_{\alpha}-u_{\alpha}^{\ast}),e(u_{\beta}^{\ast}))+\int_{\Omega^{\ast}_{R}\setminus\Omega^{\ast}_{\varepsilon^{\bar{\gamma}}}}(\mathbb{C}^{0}e(u_{\alpha}^{\ast}),e(u_{\beta}-u_{\beta}^{\ast})),\\
\mathrm{II}_{2}=&\int_{\Omega^{\ast}_{R}\setminus\Omega^{\ast}_{\varepsilon^{\bar{\gamma}}}}(\mathbb{C}^{0}e(u_{\alpha}^{\ast}),e(u_{\beta}^{\ast})).
\end{align*}
Based on the fact that the thickness of $(\Omega_{R}\setminus\Omega_{\varepsilon^{\bar{\gamma}}})\setminus(\Omega^{\ast}_{R}\setminus\Omega^{\ast}_{\varepsilon^{\bar{\gamma}}})$ is $\varepsilon$, we deduce from Corollary \ref{thm86}, \eqref{con035} and \eqref{KKTN} that
\begin{align}\label{con036}
\mathrm{II}_{1}=O(1)\varepsilon^{\frac{1}{6}}.
\end{align}
For the term $\mathrm{II}_{2}$, we divide into two cases to discuss as follows:

$(1)$ if $d=2$, $\alpha=1,\beta=2$, then from \eqref{Lw3.005} we get
\begin{align}\label{PAHN}
\mathrm{II}_{2}=&\int_{\Omega_{R}^{\ast}\setminus\Omega^{\ast}_{\varepsilon^{\bar{\gamma}}}}(\mathbb{C}^{0}e(\bar{u}_{1}^{\ast}),e(\bar{u}_{2}^{\ast}))+\int_{\Omega_{R}^{\ast}\setminus\Omega^{\ast}_{\varepsilon^{\bar{\gamma}}}}(\mathbb{C}^{0}e(u_{1}^{\ast}-\bar{u}_{1}^{\ast}),e(u_{2}^{\ast}-\bar{u}_{2}^{\ast}))\notag\\
&+\int_{\Omega_{R}^{\ast}\setminus\Omega^{\ast}_{\varepsilon^{\bar{\gamma}}}}(\mathbb{C}^{0}e(u_{1}^{\ast}-\bar{u}_{1}^{\ast}),e(\bar{u}_{2}^{\ast}))+\int_{\Omega_{R}^{\ast}\setminus\Omega^{\ast}_{\varepsilon^{\bar{\gamma}}}}(\mathbb{C}^{0}e(\bar{u}_{1}^{\ast}),e(u_{2}^{\ast}-\bar{u}_{2}^{\ast}))\notag\\
=&\int_{\Omega_{R}^{\ast}\setminus\Omega^{\ast}_{\varepsilon^{\bar{\gamma}}}}(\lambda+\mu)\partial_{x_{1}}\bar{v}^{\ast}\partial_{x_{2}}\bar{v}^{\ast}+O(1)\notag\\
=&O(1)|\ln\varepsilon|;
\end{align}

$(2)$ if $d\geq3,\,\alpha,\beta=1,2,...,d,\,\alpha<\beta$, if $d\geq2,\alpha=1,2,...,d,\,\beta=d+1,...,\frac{d(d+1)}{2},\alpha<\beta$, or if $d\geq3,\,\alpha,\beta=d+1,...,\frac{d(d+1)}{2},\,\alpha<\beta$, then from \eqref{Lw3.005} and \eqref{LKT6.005} we obtain
\begin{align}\label{QKL}
&\mathrm{II}_{2}-\int_{\Omega^{\ast}_{R}}(\mathbb{C}^{0}e(u_{\alpha}^{\ast}),e(u_{\beta}^{\ast}))=-\int_{\Omega^{\ast}_{\varepsilon^{\bar{\gamma}}}}(\mathbb{C}^{0}e(u_{\alpha}^{\ast}),e(u_{\beta}^{\ast}))\notag\\
=&\int_{\Omega_{\varepsilon^{\bar{\gamma}}}}(\mathbb{C}^{0}e(\bar{u}_{\alpha}^{\ast}),e(\bar{u}_{\beta}^{\ast}))+\int_{\Omega_{\varepsilon^{\bar{\gamma}}}}(\mathbb{C}^{0}e(u_{\alpha}^{\ast}-\bar{u}_{\alpha}^{\ast}),e(u_{\beta}^{\ast}-\bar{u}_{\beta}^{\ast}))\notag\\
&+\int_{\Omega_{\varepsilon^{\bar{\gamma}}}}(\mathbb{C}^{0}e(\bar{u}_{\alpha}^{\ast}),e(u_{\beta}^{\ast}-\bar{u}_{\beta}^{\ast}))+\int_{\Omega_{\varepsilon^{\bar{\gamma}}}}(\mathbb{C}^{0}e(u_{\alpha}^{\ast}-\bar{u}_{\alpha}^{\ast}),e(\bar{u}_{\beta}^{\ast}))\notag\\
=&\int_{\Omega_{\varepsilon^{\bar{\gamma}}}}(\mathbb{C}^{0}e(\psi_{\alpha}\bar{v}^{\ast}),e(\psi_{\beta}\bar{v}^{\ast}))\notag\\
&+O(1)
\begin{cases}
\varepsilon^{(d-1)\bar{\gamma}},&\alpha=1,2,...,d,\,\beta=1,2,...,\frac{d(d+1)}{2},\,\alpha<\beta,\\
\varepsilon^{d\bar{\gamma}},&\alpha,\beta=d+1,...,\frac{d(d+1)}{2},\,\alpha<\beta,
\end{cases}
\end{align}
where we utilized the fact that
\begin{align*}
(\mathbb{C}^{0}e(\bar{u}_{\alpha}^{\ast}),e(\bar{u}_{\beta}^{\ast}))=&(\mathbb{C}^{0}e(\psi_{\alpha}\bar{v}^{\ast}),e(\psi_{\beta}\bar{v}^{\ast}))+(\mathbb{C}^{0}e(\mathcal{F}_{\alpha}^{\ast}),e(\mathcal{F}_{\beta}^{\ast}))\notag\\
&+(\mathbb{C}^{0}e(\psi_{\alpha}\bar{v}^{\ast}),e(\mathcal{F}_{\beta}^{\ast}))+(\mathbb{C}^{0}e(\mathcal{F}_{\alpha}^{\ast}),e(\psi_{\beta}\bar{v}^{\ast})).
\end{align*}
Denote $E_{\alpha\beta}(\bar{v}^{\ast})=(\mathbb{C}^{0}e(\psi_{\alpha}\bar{v}^{\ast}),e(\psi_{\beta}\bar{v}^{\ast}))$. Then it follows from a direct calculation that

$(\rm{i})$ if $\alpha,\beta=1,2,...,d,$ $\alpha<\beta$, then
\begin{align}\label{ZH0000}
E_{\alpha\beta}(\bar{v}^{\ast})=(\lambda+\mu)\partial_{x_{\alpha}}\bar{v}^{\ast}\partial_{x_{\beta}}\bar{v}^{\ast};
\end{align}

$(\rm{ii})$ if $\alpha=1,2,...,d$, $\beta=d+1,...,\frac{d(d+1)}{2}$, then there exist two indices $1\leq i_{\beta}<j_{\beta}\leq d$ such that
$\psi_{\beta}\bar{v}^{\ast}=(0,...,0,x_{j_{\beta}}\bar{v}^{\ast},0,...,0,-x_{i_{\beta}}\bar{v}^{\ast},0,...,0)$. In the case of $i_{\beta}\neq\alpha,\,j_{\beta}\neq\alpha$, we have
\begin{align}\label{ZH000}
E_{\alpha\beta}(\bar{v}^{\ast})=(\lambda+\mu)\partial_{x_{\alpha}}\bar{v}^{\ast}(x_{j_{\beta}}\partial_{i_{\beta}}\bar{v}^{\ast}-x_{i_{\beta}}\partial_{x_{j_{\beta}}}\bar{v}^{\ast}),
\end{align}
and in the case of $i_{\beta}=\alpha,\,j_{\beta}\neq\alpha$, then
\begin{align}\label{ZH001}
E_{\alpha\beta}(\bar{v}^{\ast})=\mu x_{j_{\beta}}\sum^{d}_{k=1}(\partial_{x_{k}}\bar{v}^{\ast})^{2}+(\lambda+\mu)\partial_{x_{\alpha}}\bar{v}^{\ast}(x_{j_{\beta}}\partial_{i_{\beta}}\bar{v}^{\ast}-x_{i_{\beta}}\partial_{x_{j_{\beta}}}\bar{v}^{\ast}),
\end{align}
and in the case of $i_{\beta}\neq\alpha,\,j_{\beta}=\alpha$, then
\begin{align}\label{ZH002}
E_{\alpha\beta}(\bar{v}^{\ast})=&-\mu x_{i_{\beta}}\sum^{d}_{k=1}(\partial_{x_{k}}\bar{v}^{\ast})^{2}+(\lambda+\mu)\partial_{x_{\alpha}}\bar{v}^{\ast}(x_{j_{\beta}}\partial_{i_{\beta}}\bar{v}^{\ast}-x_{i_{\beta}}\partial_{x_{j_{\beta}}}\bar{v}^{\ast});
\end{align}

$(\rm{iii})$ if $\alpha,\beta=d+1,...,\frac{d(d+1)}{2}$, $\alpha<\beta$, then there exist four indices $1\leq i_{\alpha}<j_{\alpha}\leq d$ and $1\leq i_{\beta}<j_{\beta}\leq d$ such that $\psi_{\alpha}\bar{v}^{\ast}=(0,...,0,x_{j_{\alpha}}\bar{v}^{\ast},0,...,0,-x_{i_{\alpha}}\bar{v}^{\ast},0,...,0)$
and
$\psi_{\beta}\bar{v}^{\ast}=(0,...,0,x_{j_{\beta}}\bar{v}^{\ast},0,...,0,-x_{i_{\beta}}\bar{v}^{\ast},0,...,0)$. In view of the fact that $\alpha<\beta$, we get $j_{\beta}\leq j_{\alpha}$. In the case of $i_{\alpha}\neq i_{\beta},\,j_{\alpha}\neq j_{\beta},\,i_{\alpha}\neq j_{\beta}$, we get
\begin{align}\label{ZH003}
E_{\alpha\beta}(\bar{v}^{\ast})=(\lambda+\mu)(x_{j_{\alpha}}\partial_{x_{i_{\alpha}}}\bar{v}^{\ast}-x_{i_{\alpha}}\partial_{x_{j_{\alpha}}}\bar{v}^{\ast})(x_{j_{\beta}}\partial_{x_{i_{\beta}}}\bar{v}^{\ast}-x_{i_{\beta}}\partial_{x_{j_{\beta}}}\bar{v}^{\ast}),
\end{align}
and in the case of $i_{\alpha}=i_{\beta},\,j_{\alpha}\neq j_{\beta}$,
\begin{align}\label{ZH004}
E_{\alpha\beta}(\bar{v}^{\ast})=&\mu x_{j_{\alpha}}x_{j_{\beta}}\sum^{d}_{k=1}(\partial_{x_{k}}\bar{v}^{\ast})^{2}\notag\\
&+(\lambda+\mu)(x_{j_{\alpha}}\partial_{x_{i_{\alpha}}}\bar{v}^{\ast}-x_{i_{\alpha}}\partial_{x_{j_{\alpha}}}\bar{v}^{\ast})(x_{j_{\beta}}\partial_{x_{i_{\beta}}}\bar{v}^{\ast}-x_{i_{\beta}}\partial_{x_{j_{\beta}}}\bar{v}^{\ast}),
\end{align}
and in the case of $i_{\alpha}\neq i_{\beta},\,j_{\alpha}=j_{\beta}$,
\begin{align}\label{ZH005}
E_{\alpha\beta}(\bar{v}^{\ast})=&\mu x_{i_{\alpha}}x_{i_{\beta}}\sum^{d}_{k=1}(\partial_{x_{k}}\bar{v}^{\ast})^{2}\notag\\
&+(\lambda+\mu)(x_{j_{\alpha}}\partial_{x_{i_{\alpha}}}\bar{v}^{\ast}-x_{i_{\alpha}}\partial_{x_{j_{\alpha}}}\bar{v}^{\ast})(x_{j_{\beta}}\partial_{x_{i_{\beta}}}\bar{v}^{\ast}-x_{i_{\beta}}\partial_{x_{j_{\beta}}}\bar{v}^{\ast}),
\end{align}
and in the case of $i_{\beta}<j_{\beta}=i_{\alpha}<j_{\alpha}$,
\begin{align}\label{ZH006}
E_{\alpha\beta}(\bar{v}^{\ast})=&-\mu x_{i_{\beta}}x_{j_{\alpha}}\sum^{d}_{k=1}(\partial_{x_{k}}\bar{v}^{\ast})^{2}\notag\\
&+(\lambda+\mu)(x_{j_{\alpha}}\partial_{x_{i_{\alpha}}}\bar{v}^{\ast}-x_{i_{\alpha}}\partial_{x_{j_{\alpha}}}\bar{v}^{\ast})(x_{j_{\beta}}\partial_{x_{i_{\beta}}}\bar{v}^{\ast}-x_{i_{\beta}}\partial_{x_{j_{\beta}}}\bar{v}^{\ast}).
\end{align}

Hence, utilizing the parity of integrand and the symmetry of integral region and in view of the fact that
\begin{align*}
\left|\int^{h_{1}(x')}_{h(x')}x_{d}\,dx_{d}\right|\leq\frac{1}{2}|(h_{1}+h)(x')(h_{1}-h)(x')|\leq C|x'|^{2m},\quad \mathrm{in}\; B'_{R},
\end{align*}
we deduce from \eqref{QKL}--\eqref{ZH006} that
\begin{align}\label{QPQ01}
&\mathrm{II}_{2}-\int_{\Omega^{\ast}_{R}}(\mathbb{C}^{0}e(u_{\alpha}^{\ast}),e(u_{\beta}^{\ast}))\notag\\
=&O(1)
\begin{cases}
\varepsilon^{\frac{d-2}{12m}},&d\geq3,\,\alpha,\beta=1,2,...,d,\,\alpha<\beta,\\
\varepsilon^{\frac{d-1}{12m}},&d\geq2,\,\alpha=1,2,...,d,\,\beta=d+1,...,\frac{d(d+1)}{2},\\
\varepsilon^{\frac{d}{12m}},&d\geq3,\,\alpha,\beta=d+1,...,\frac{d(d+1)}{2},\,\alpha<\beta.
\end{cases}
\end{align}
Consequently, from \eqref{con036}--\eqref{PAHN} and \eqref{QPQ01} we get that
\begin{align}\label{FATL001}
\mathrm{II}=&O(1)|\ln\varepsilon|,\quad d=2,\alpha=1,\beta=2,
\end{align}
and
\begin{align}\label{FATL002}
&\mathrm{II}-\int_{\Omega^{\ast}_{R}}(\mathbb{C}^{0}e(u_{\alpha}^{\ast}),e(u_{\beta}^{\ast}))\notag\\
=&O(1)
\begin{cases}
\varepsilon^{\min\{\frac{1}{6},\frac{d-1}{12m}\}},&d\geq3,\alpha,\beta=1,2,...,d,\,\alpha<\beta\\
\varepsilon^{\min\{\frac{1}{6},\frac{d-2}{12m}\}},&d\geq2,\,\alpha=1,2,...,d,\,\beta=d+1,...,\frac{d(d+1)}{2},\\
\varepsilon^{\min\{\frac{1}{6},\frac{d}{12m}\}},&d\geq3,\,\alpha,\beta=d+1,...,\frac{d(d+1)}{2},\,\alpha<\beta.
\end{cases}
\end{align}

With regard to the last part $\mathrm{III}$, following the same argument as in \eqref{QKL}, we have
\begin{align}\label{KKTA}
\mathrm{III}=&\int_{\Omega_{\varepsilon^{\bar{\gamma}}}}(\mathbb{C}^{0}e(u_{\alpha}),e(u_{\beta}))\notag\\
=&\int_{\Omega_{\varepsilon^{\bar{\gamma}}}}(\mathbb{C}^{0}e(\psi_{\alpha}\bar{v}),e(\psi_{\beta}\bar{v}))\notag\\
&+O(1)
\begin{cases}
\varepsilon^{(d-1)\bar{\gamma}},&\alpha=1,2,...,d,\,\beta=1,2,...,\frac{d(d+1)}{2},\,\alpha<\beta,\\
\varepsilon^{d\bar{\gamma}},&\alpha,\beta=d+1,...,\frac{d(d+1)}{2},\,\alpha<\beta.
\end{cases}
\end{align}
Proceeding as in \eqref{ZH0000}--\eqref{ZH006} with $\bar{v}$ substituting for $\bar{v}^{\ast}$, we conclude from \eqref{KKTA} that
\begin{align}\label{MAH01}
\mathrm{III}=&O(1)
\begin{cases}
|\ln\varepsilon|,&d=2,\,\alpha=1,\beta=2,\\
\varepsilon^{\frac{d-2}{12m}},&d\geq3,\,\alpha,\beta=1,2,...,d,\,\alpha<\beta,\\
\varepsilon^{\frac{d-1}{12m}},&d\geq2,\,\alpha=1,2,...,d,\,\beta=d+1,...,\frac{d(d+1)}{2},\\
\varepsilon^{\frac{d}{12m}},&d\geq3,\,\alpha,\beta=d+1,...,\frac{d(d+1)}{2},\,\alpha<\beta.
\end{cases}
\end{align}
Then combining \eqref{KKAA1233333}, \eqref{FATL001}--\eqref{FATL002} and \eqref{MAH01}, we obtain that
\begin{align*}
a_{12}=O(1)|\ln\varepsilon|,\quad d=2,
\end{align*}
and
\begin{align*}
&a_{\alpha\beta}-a_{\alpha\beta}^{\ast}\notag\\
=&O(1)
\begin{cases}
\varepsilon^{\min\{\frac{1}{6},\frac{d-2}{12m}\}},&d\geq3,\alpha,\beta=1,2,...,d,\,\alpha<\beta,\\
\varepsilon^{\min\{\frac{1}{6},\frac{d-2}{12m}\}},&d\geq2,\,\alpha=1,2,...,d,\,\beta=d+1,...,\frac{d(d+1)}{2},\\
\varepsilon^{\min\{\frac{1}{6},\frac{d}{12m}\}},&d\geq3,\,\alpha,\beta=d+1,...,\frac{d(d+1)}{2},\,\alpha<\beta.
\end{cases}
\end{align*}

\end{proof}

\subsection{The proofs of Theorems \ref{Lthm066} and \ref{thma002}}
To begin with, we state a Lemma with its proof seen in \cite{BJL2017}, which will be used to prove Theorems \ref{Lthm066} and \ref{thma002} in the following.
\begin{lemma}\label{FBC6}
There exists a positive universal constant $C$, independent of $\varepsilon$, such that
\begin{align}\label{BJ010}
\sum^{\frac{d(d+1)}{2}}_{i,j=1}a_{ij}\xi_{i}\xi_{j}\geq\frac{1}{C},\quad\;\,\forall\;\xi\in\mathbb{R}^{\frac{d(d+1)}{2}},\;|\xi|=1.
\end{align}
\end{lemma}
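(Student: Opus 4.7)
The plan is to recognize that for $\xi\in\mathbb{R}^{d(d+1)/2}$ with $|\xi|=1$, the linear combinations
$$v_\xi := \sum_{\alpha=1}^{d(d+1)/2}\xi_\alpha u_\alpha, \qquad \psi_\xi := \sum_{\alpha=1}^{d(d+1)/2}\xi_\alpha\psi_\alpha,$$
satisfy, by linearity of \eqref{P2.005}, $\mathcal{L}_{\lambda,\mu}v_\xi=0$ in $\Omega$ with $v_\xi=\psi_\xi$ on $\partial D_1$ and $v_\xi=0$ on $\partial D$. Applying identity \eqref{Le2.01222} on $\Omega$ and using both $\mathcal{L}_{\lambda,\mu}v_\xi=0$ and the vanishing trace on $\partial D$, the quadratic form in \eqref{BJ010} is rewritten as a Dirichlet energy,
$$\sum_{i,j=1}^{d(d+1)/2}a_{ij}\xi_i\xi_j = \int_\Omega (\mathbb{C}^0 e(v_\xi),e(v_\xi))\,dx.$$
So the claim reduces to an $\varepsilon$-uniform coercivity estimate on this energy.

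To transfer the estimate to a fixed Lipschitz domain, I would extend $v_\xi$ to all of $D$ by setting $\tilde v_\xi := \psi_\xi$ inside $D_1$. Because the traces of $v_\xi$ and $\psi_\xi$ match on $\partial D_1$, we get $\tilde v_\xi\in H^1(D;\mathbb{R}^d)$ with $\tilde v_\xi=0$ on $\partial D$; and since $\psi_\xi$ is a rigid displacement, $e(\psi_\xi)\equiv 0$, so
$$\int_D |e(\tilde v_\xi)|^2\,dx = \int_\Omega |e(v_\xi)|^2\,dx.$$
On the fixed Lipschitz domain $D$, Korn's second inequality for fields vanishing on $\partial D$ combined with Poincar\'e's inequality yields an absolute constant $C_D$, depending only on $D$, such that
$$\|\tilde v_\xi\|_{L^2(D)}^2 \leq C_D \int_D |e(\tilde v_\xi)|^2\,dx.$$

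It remains to bound $\|\tilde v_\xi\|_{L^2(D)}$ from below by a positive constant uniform in $\xi$ and small $\varepsilon$. Since $\tilde v_\xi=\psi_\xi$ on $D_1$, it suffices to show $\|\psi_\xi\|_{L^2(D_1)}\geq c_0|\xi|$ for $\varepsilon$ small. The map $\xi\mapsto\|\psi_\xi\|_{L^2(D_1^\ast)}$ is a norm on $\mathbb{R}^{d(d+1)/2}$: the $\psi_\alpha$ are polynomials of degree at most one and linearly independent as rigid displacements, hence no nontrivial linear combination can vanish on the open set $D_1^\ast$. By equivalence of norms in finite dimensions, $\|\psi_\xi\|_{L^2(D_1^\ast)}\geq 2c_0|\xi|$ for some $c_0>0$. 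Since $D_1=D_1^\ast+(0',\varepsilon)$ is a small rigid translate of $D_1^\ast$ and each $\psi_\xi$ is polynomial, a direct continuity-in-$\varepsilon$ argument gives $\|\psi_\xi\|_{L^2(D_1)}\geq c_0|\xi|$ for all sufficiently small $\varepsilon$, uniformly in $|\xi|=1$. Chaining these bounds with the ellipticity \eqref{Le2.012},
$$\sum_{i,j}a_{ij}\xi_i\xi_j \geq \min\{2\mu,\,d\lambda+2\mu\}\int_\Omega |e(v_\xi)|^2\,dx \geq \frac{\min\{2\mu,\,d\lambda+2\mu\}\,c_0^2}{C_D}=:\frac{1}{C}.$$

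The principal subtlety, and the only one requiring care, is ensuring that no constant in the chain depends on $\varepsilon$. The extension trick is tailored for this: every analytic inequality (Korn, Poincar\'e, trace) is applied on the $\varepsilon$-independent domain $D$, while the $\varepsilon$-dependent geometry enters only through the $L^2$-norm on the translate $D_1=D_1^\ast+(0',\varepsilon)$, where the dependence is removed by the elementary continuity-in-translation argument above. In particular, the narrow neck in $\Omega_R$ that drives all the singular behavior studied elsewhere in the paper is bypassed entirely and plays no role in this coercivity lemma.
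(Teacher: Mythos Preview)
Your proof is correct. The paper does not actually prove this lemma but simply cites \cite{BJL2017} for it; your argument---extend $v_\xi$ by the rigid displacement $\psi_\xi$ across $D_1$ so that all analytic inequalities (Korn, Poincar\'e) can be applied on the fixed domain $D$, then bound the $L^2$-norm from below via $\|\psi_\xi\|_{L^2(D_1)}$---is precisely the standard route for this kind of $\varepsilon$-uniform coercivity and is the one used in that reference. One minor remark: what you call ``Korn's second inequality'' for fields vanishing on $\partial D$ is usually called the first Korn inequality, but the content is the same and the constant depends only on $D$.
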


In this section, $a\simeq b$ denotes $\frac{a}{b}=1+o(1)$, where $b\neq0$ and $\lim\limits_{\varepsilon\rightarrow0}o(1)=0$. This means that $b$ is approximately equal to $a$. Before proving our main results, we first introduce some notations. Denote
\begin{align*}
X=\big(C^{1},C^{2},...,C^{\frac{d(d+1)}{2}}\big)^{T},\quad Y=\big(Q_{1}[\varphi],Q_{2}[\varphi],...,Q_{\frac{d(d+1)}{2}}[\varphi]\big)^{T},
\end{align*}
and
\begin{gather*}
\mathbb{A}=\begin{pmatrix} a_{11}&\cdots&a_{1d} \\\\ \vdots&\ddots&\vdots\\\\a_{d1}&\cdots&a_{dd}\end{pmatrix}  ,\;\,
\mathbb{B}=\begin{pmatrix}a_{1\,d+1}&\cdots&a_{1\,\frac{d(d+1)}{2}} \\\\ \vdots&\ddots&\vdots\\\\ a_{d\,d+1}&\cdots&a_{d\,\frac{d(d+1)}{2}}\end{pmatrix},
\end{gather*}
\begin{gather}
\mathbb{C}=\begin{pmatrix}a_{d+1\,1}&\cdots&a_{d+1\,d} \\\\ \vdots&\ddots&\vdots\\\\ a_{\frac{d(d+1)}{2}\,1}&\cdots&a_{\frac{d(d+1)}{2}\,\frac{d(d+1)}{2}}\end{pmatrix},\notag\\
\mathbb{D}=\begin{pmatrix} a_{d+1\,d+1}&\cdots&a_{d+1\,\frac{d(d+1)}{2}} \\\\ \vdots&\ddots&\vdots\\\\ a_{\frac{d(d+1)}{2}\,d+1}&\cdots&a_{\frac{d(d+1)}{2}\,\frac{d(d+1)}{2}}\end{pmatrix}.\label{ALHNCZ001}
\end{gather}
Let
\begin{align}\label{QLAN}
\mathbb{F}=\begin{pmatrix} \mathbb{A}&\mathbb{B} \\  \mathbb{C}&\mathbb{D}
\end{pmatrix}.
\end{align}
Then \eqref{Le3.078} can be written as
\begin{align}\label{MWNA001}
\mathbb{F}X=Y.
\end{align}

\begin{proof}[The proofs of Theorems \ref{Lthm066} and \ref{thma002}]

{\bf Step 1.} Let $m\geq d+1$. Applying the Cramer's rule to \eqref{MWNA001}, it follows from Lemmas \ref{KM323} and \ref{lemmabc} that
\begin{align*}
C^{\alpha}\simeq&\frac{\prod\limits_{i\neq\alpha}^{\frac{d(d+1)}{2}}a_{ii}Q_{\alpha}[\varphi]}{\prod\limits_{i=1}^{\frac{d(d+1)}{2}}a_{ii}}=\frac{Q_{\alpha}[\varphi]}{a_{\alpha\alpha}},
\end{align*}
which yields that

$(\rm{i})$ if condition ({\bf{E1}}) holds, then
\begin{align}\label{rep001}
|C^{\alpha}|\lesssim
\begin{cases}
\frac{\eta\kappa_{2}^{\frac{d-1}{m}}}{\kappa_{1}^{\frac{d+k-1}{m}}}\frac{\rho_{k}(d,m;\varepsilon)}{\rho_{0}(d,m;\varepsilon)},&\alpha=1,2,...,d,\,m\geq d+k-1,\\
\frac{\kappa_{2}^{\frac{d-1}{m}}|Q_{\alpha}^{\ast}[\varphi]|}{\mathcal{L}_{d}^{\alpha}}\frac{1}{\rho_{0}(d,m;\varepsilon)},&\alpha=1,2,...,d,\,d+1\leq m<d+k-1,\\
\frac{\kappa_{2}^{\frac{d+1}{m}}|Q_{\alpha}^{\ast}[\varphi]|}{\mathcal{L}_{d}^{\alpha}}\frac{1}{\rho_{2}(d,m;\varepsilon)},&\alpha=d+1,...,\frac{d(d+1)}{2},\,m\geq d+1,
\end{cases}
\end{align}
and
\begin{align}\label{rep002}
|C^{\alpha}|\gtrsim
\begin{cases}
\frac{\eta\kappa_{1}^{\frac{d-1}{m}}}{\kappa_{2}^{\frac{d+k-1}{m}}}\frac{\rho_{k}(d,m;\varepsilon)}{\rho_{0}(d,m;\varepsilon)},&\alpha=1,2,...,d,\,m\geq d+k-1,\\
\frac{\kappa_{1}^{\frac{d-1}{m}}|Q_{\alpha}^{\ast}[\varphi]|}{\mathcal{L}_{d}^{\alpha}}\frac{1}{\rho_{0}(d,m;\varepsilon)},&\alpha=1,2,...,d,\,d+1\leq m<d+k-1,\\
\frac{\kappa_{1}^{\frac{d+1}{m}}|Q_{\alpha}^{\ast}[\varphi]|}{\mathcal{L}_{d}^{\alpha}}\frac{1}{\rho_{2}(d,m;\varepsilon)},&\alpha=d+1,...,\frac{d(d+1)}{2},\,m\geq d+1;
\end{cases}
\end{align}

$(\rm{ii})$ if condition ({\bf{E2}}) holds, then
\begin{align}\label{RTP001}
|C^{\alpha}|\lesssim&
\begin{cases}
\frac{\kappa_{2}^{\frac{d-1}{m}}|Q^{\ast}_{\alpha}[\varphi]|}{\mathcal{L}_{d}^{\alpha}}\frac{1}{\rho_{0}(d,m;\varepsilon)},&\alpha=1,2,...,d,\,m\geq d+1,\\
\frac{\eta\kappa_{2}^{\frac{d+1}{m}}}{\kappa_{1}^{\frac{d+k}{m}}}\frac{\rho_{k+1}(d,m;\varepsilon)}{\rho_{2}(d,m;\varepsilon)},&\alpha=d+1,\,m\geq d+k,\\
\frac{\kappa_{2}^{\frac{d+1}{m}}|Q^{\ast}_{d+1}[\varphi]|}{\mathcal{L}_{d}^{d+1}}\frac{1}{\rho_{2}(d,m;\varepsilon)},&\alpha=d+1,\,d+1\leq m<d+k,\\
\frac{\kappa_{2}^{\frac{d+1}{m}}|Q^{\ast}_{\alpha}[\varphi]|}{\mathcal{L}_{d}^{\alpha}}\frac{1}{\rho_{2}(d,m;\varepsilon)},&\alpha=d+2,...,\frac{d(d+1)}{2},\,d\geq3,\,m\geq d+1,
\end{cases}
\end{align}
and
\begin{align}\label{RTP002}
|C^{\alpha}|\gtrsim&
\begin{cases}
\frac{\kappa_{1}^{\frac{d-1}{m}}|Q^{\ast}_{\alpha}[\varphi]|}{\mathcal{L}_{d}^{\alpha}}\frac{1}{\rho_{0}(d,m;\varepsilon)},&\alpha=1,2,...,d,\,m\geq d+1,\\
\frac{\eta\kappa_{1}^{\frac{d+1}{m}}}{\kappa_{2}^{\frac{d+k}{m}}}\frac{\rho_{k+1}(d,m;\varepsilon)}{\rho_{2}(d,m;\varepsilon)},&\alpha=d+1,\,m\geq d+k,\\
\frac{\kappa_{1}^{\frac{d+1}{m}}|Q^{\ast}_{d+1}[\varphi]|}{\mathcal{L}_{d}^{d+1}}\frac{1}{\rho_{2}(d,m;\varepsilon)},&\alpha=d+1,\,d+1\leq m<d+k,\\
\frac{\kappa_{1}^{\frac{d+1}{m}}|Q^{\ast}_{\alpha}[\varphi]|}{\mathcal{L}_{d}^{\alpha}}\frac{1}{\rho_{2}(d,m;\varepsilon)},&\alpha=d+2,...,\frac{d(d+1)}{2},\,d\geq3,\,m\geq d+1;
\end{cases}
\end{align}

$(\rm{iii})$ if condition ({\bf{E3}}) holds, then
\begin{align}\label{RTP003}
|C^{\alpha}|\lesssim&
\begin{cases}
\frac{\kappa_{2}^{\frac{d-1}{m}}|Q^{\ast}_{\alpha}[\varphi]|}{\mathcal{L}_{d}^{\alpha}}\frac{1}{\rho_{0}(d,m;\varepsilon)},&\alpha=1,2,...,d,\,m\geq d+1,\\
\frac{\kappa_{2}^{\frac{d+1}{m}}|Q^{\ast}_{\alpha}[\varphi]|}{\mathcal{L}_{d}^{\alpha}}\frac{1}{\rho_{2}(d,m;\varepsilon)},&\alpha=d+1,...,\frac{d(d+1)}{2},\,m\geq d+1,
\end{cases}
\end{align}
and
\begin{align}\label{RTP005}
|C^{\alpha}|\gtrsim&
\begin{cases}
\frac{\kappa_{1}^{\frac{d-1}{m}}|Q^{\ast}_{\alpha}[\varphi]|}{\mathcal{L}_{d}^{\alpha}}\frac{1}{\rho_{0}(d,m;\varepsilon)},&\alpha=1,2,...,d,\,m\geq d+1,\\
\frac{\kappa_{1}^{\frac{d+1}{m}}|Q^{\ast}_{\alpha}[\varphi]|}{\mathcal{L}_{d}^{\alpha}}\frac{1}{\rho_{2}(d,m;\varepsilon)},&\alpha=d+1,...,\frac{d(d+1)}{2},\,m\geq d+1.
\end{cases}
\end{align}

{\bf Step 2.} Let $d-1\leq m<d+1$. Denote
\begin{gather}\label{QKT001}
\mathbb{F}_{1}^{\alpha}[\varphi]:=
\begin{pmatrix} Q_{\alpha}[\varphi]&a_{\alpha\,d+1}&\cdots&a_{\alpha\,\frac{d(d+1)}{2}} \\ Q_{d+1}[\varphi]&a_{d+1\,d+1}&\cdots&a_{d+1\,\frac{d(d+1)}{2}} \\ \vdots&\vdots&\ddots&\vdots\\Q_{\frac{d(d+1)}{2}}[\varphi]&a_{\frac{d(d+1)}{2}\,d+1}&\cdots&a_{\frac{d(d+1)}{2}\,\frac{d(d+1)}{2}}
\end{pmatrix},\quad\alpha=1,2,...,d,
\end{gather}
and, for $\alpha=d+1,...,\frac{d(d+1)}{2}$, by substituting the column vector $\big(Q_{d+1}[\varphi],...,Q_{\frac{d(d+1)}{2}}[\varphi]\big)^{T}$ for the elements of $\alpha$-th column in the matrix $\mathbb{D}$ defined by \eqref{ALHNCZ001}, we get the new matrix $\mathbb{F}_{2}^{\alpha}[\varphi]$ as follows:
\begin{gather}\label{QKT002}
\mathbb{F}_{2}^{\alpha}[\varphi]:=
\begin{pmatrix}
a_{d+1\,d+1}&\cdots&Q_{d+1}[\varphi]&\cdots&a_{d+1\,\frac{d(d+1)}{2}} \\\\ \vdots&\ddots&\vdots&\ddots&\vdots\\\\a_{\frac{d(d+1)}{2}\,d+1}&\cdots&Q_{\frac{d(d+1)}{2}}[\varphi]&\cdots&a_{\frac{d(d+1)}{2}\frac{d(d+1)}{2}}
\end{pmatrix}.
\end{gather}
Then it follows from the Cramer's rule and Lemmas \ref{KM323}--\ref{lemmabc} that if condition ({\bf{E1}}), ({\bf{E2}}) or ({\bf{E3}}) holds for $d-1\leq m<d+1$,
\begin{align}\label{TGV001}
C^{\alpha}\simeq&
\begin{cases}
\frac{\prod\limits_{i\neq\alpha}^{d}a_{ii}\det\mathbb{F}_{1}^{\alpha}[\varphi]}{\prod\limits_{i=1}^{d}a_{ii}\det\mathbb{D}}=\frac{\det\mathbb{F}_{1}^{\alpha}[\varphi]}{\det\mathbb{D}}\frac{1}{a_{\alpha\alpha}},&\alpha=1,2,...,d,\\
\frac{\det\mathbb{F}_{2}^{\alpha}[\varphi]}{\det\mathbb{D}},&\alpha=d+1,...,\frac{d(d+1)}{2}.
\end{cases}
\end{align}
Using Lemmas \ref{KM323}--\ref{lemmabc} again, we derive
\begin{align}
\det\mathbb{F}_{1}^{\alpha}[\varphi]=&\det\mathbb{F}_{1}^{\ast\alpha}[\varphi]+O(1)\varepsilon^{\min\{\frac{d+1-m}{12m},\frac{d+k-1-m}{2(d+k-1)}\}},\quad\alpha=1,2,...,d,\label{PAK001}\\
\det\mathbb{F}_{2}^{\alpha}[\varphi]=&\det\mathbb{F}_{2}^{\ast\alpha}[\varphi]+O(1)\varepsilon^{\frac{d+1-m}{12m}},\quad\alpha=d+1,...,\frac{d(d+1)}{2},\label{PAK002}\\
\det\mathbb{D}=&\det\mathbb{D}^{\ast}+O(1)\varepsilon^{\frac{d+1-m}{12m}}.\label{PAK003}
\end{align}
Therefore, substituting \eqref{PAK001}--\eqref{PAK003} into \eqref{TGV001}, we deduce that for $\alpha=1,2,...,d,$
\begin{align}\label{re001}
\frac{\kappa_{1}^{\frac{d-1}{m}}|\det\mathbb{F}_{1}^{\ast\alpha}[\varphi]|}{\mathcal{L}_{d}^{\alpha}\det\mathbb{D}^{\ast}}\frac{1}{\rho_{0}(d,m;\varepsilon)}\lesssim|C^{\alpha}|\lesssim\frac{\kappa_{2}^{\frac{d-1}{m}}|\det\mathbb{F}_{1}^{\ast\alpha}[\varphi]|}{\mathcal{L}_{d}^{\alpha}\det\mathbb{D}^{\ast}}\frac{1}{\rho_{0}(d,m;\varepsilon)},
\end{align}
and
\begin{align}\label{re002}
C^{\alpha}\simeq\frac{\det\mathbb{F}_{2}^{\ast\alpha}[\varphi]}{\det\mathbb{D}^{\ast}},\quad\alpha=d+1,...,\frac{d(d+1)}{2}.
\end{align}

We now claim that $\det\mathbb{D}^{\ast}>0$. Taking $\xi=(\xi_{1},\xi_{2},...,\xi_{\frac{d(d+1)}{2}})\in\mathbb{R}^{\frac{d(d+1)}{2}}$ with $\xi_{1}=\cdots=\xi_{d}=0$ and $|\xi|=1$ in \eqref{BJ010}, we deduce from Lemma \ref{lemmabc} that
\begin{align*}
\sum^{\frac{d(d+1)}{2}}_{i,j=1}a_{ij}\xi_{i}\xi_{j}=\sum^{\frac{d(d+1)}{2}}_{i,j=d+1}a_{ij}^{\ast}\xi_{i}\xi_{j}+O(\varepsilon^{\min\{\frac{1}{6},\frac{d+1-m}{12m}\}})\geq\frac{1}{C},
\end{align*}
where the constant $C$ is independent of $\varepsilon$. Therefore,
$$\sum^{\frac{d(d+1)}{2}}_{i,j=d+1}a_{ij}^{\ast}\xi_{i}\xi_{j}\geq\frac{1}{C}>0,$$
which implies that the matrix $\mathbb{D}^{\ast}$ is positive definite and thus $\det\mathbb{D}^{\ast}>0$.

{\bf Step 3.} Let $m<d-1$. For $\alpha=1,2,...,\frac{d(d+1)}{2}$, by replacing the elements of $\alpha$-th column of the matrix $\mathbb{F}$ defined in \eqref{QLAN} by column vector $(Q_{1}[\varphi],Q_{2}[\varphi],...,Q_{\frac{d(d+1)}{2}}[\varphi])^{T}$, we obtain the new matrix $\mathbb{F}_{3}^{\alpha}[\varphi]$ as follows:
\begin{gather}\label{PGNC}
\mathbb{F}_{3}^{\alpha}[\varphi]=:
\begin{pmatrix}
a_{11}&\cdots&Q_{1}[\varphi]&\cdots&a_{1\,\frac{d(d+1)}{2}} \\\\ \vdots&\ddots&\vdots&\ddots&\vdots\\\\a_{\frac{d(d+1)}{2}\,1}&\cdots&Q_{\frac{d(d+1)}{2}}[\varphi]&\cdots&a_{\frac{d(d+1)}{2}\frac{d(d+1)}{2}}
\end{pmatrix}.
\end{gather}
Then a consequence of Lemmas \ref{KM323} and \ref{lemmabc} yields that
\begin{align*}
\mathbb{F}_{3}^{\alpha}[\varphi]=\mathbb{F}_{3}^{\ast\alpha}[\varphi]+O(1)\varepsilon^{\min\{\frac{d+k-1-m}{2(d+k-1)},\frac{d-1-m}{12m}\}},\quad\alpha=1,2,...,\frac{d(d+1)}{2},
\end{align*}
and
\begin{align*}
\mathbb{F}=\mathbb{F}^{\ast}+O(1)\varepsilon^{\min\{\frac{1}{6},\frac{d-1-m}{12m}\}}.
\end{align*}
Similarly as before, we see from the Cramer's rule that if condition ({\bf{E1}}), ({\bf{E2}}) or ({\bf{E3}}) holds for $m<d-1$,
\begin{align}\label{re003}
C^{\alpha}=\frac{\det\mathbb{F}_{3}^{\alpha}[\varphi]}{\det\mathbb{F}}\simeq\frac{\det\mathbb{F}_{3}^{\ast\alpha}[\varphi]}{\det\mathbb{F}^{\ast}},\quad\alpha=1,2,...,\frac{d(d+1)}{2}.
\end{align}

We now demonstrate that $\det\mathbb{F}^{\ast}>0$. Proceeding as before, it follows from Lemmas \ref{lemmabc} and \ref{FBC6} that
\begin{align*}
\sum^{\frac{d(d+1)}{2}}_{i,j=1}a_{ij}\xi_{i}\xi_{j}=\sum^{\frac{d(d+1)}{2}}_{i,j=1}a_{ij}^{\ast}\xi_{i}\xi_{j}+O(\varepsilon^{\min\{\frac{1}{6},\frac{d-1-m}{12m}\}})\geq\frac{1}{C},\quad\;\,\forall\;\xi\in\mathbb{R}^{\frac{d(d+1)}{2}},\;|\xi|=1.
\end{align*}
This reads that
\begin{align*}
\sum^{\frac{d(d+1)}{2}}_{i,j=1}a_{ij}^{\ast}\xi_{i}\xi_{j}\geq\frac{1}{C}.
\end{align*}
Thus $\mathbb{F}^{\ast}$ is a positive definite matrix, which means that $\det\mathbb{F}^{\ast}>0.$

{\bf Step 4.} In this step we aim to establish the optimal upper and lower bounds on the blow-up rate of the gradient in the shortest segment $\{x'=0'\}\cap\Omega$.

$(\rm{i})$ Let condition ({\bf{E1}}), ({\bf{E2}}) or ({\bf{E3}}) hold for $m\leq d$. In light of decomposition \eqref{Le2.015}, we deduce from Corollary \ref{thm86} and \eqref{re001}--\eqref{re003} that for $x\in\{x'=0'\}\cap\Omega$,
\begin{align*}
|\nabla u|\leq&\sum^{d}_{\alpha=1}|C^{\alpha}||\nabla u_{\alpha}|+\sum^{\frac{d(d+1)}{2}}_{\alpha=d+1}|C^{\alpha}||\nabla u_{\alpha}|+|\nabla u_{0}|\notag\\
\lesssim&
\begin{cases}
\frac{\max\limits_{1\leq\alpha\leq d}\kappa_{2}^{\frac{d-1}{m}}|\mathcal{L}_{d}^{\alpha}|^{-1}|\det\mathbb{A}_{1}^{\ast\alpha}|}{\det\mathbb{D}^{\ast}}\frac{1}{\varepsilon\rho_{0}(d,m;\varepsilon)},&d-1\leq m\leq d,\\
\frac{\max\limits_{1\leq\alpha\leq d}|\det\mathbb{F}_{3}^{\ast\alpha}[\varphi]|}{\det\mathbb{F}^{\ast}}\frac{1}{\varepsilon},&m<d-1,
\end{cases}
\end{align*}
and
\begin{align*}
|\nabla u|\geq&\left|\sum^{d}_{\alpha=1}C^{\alpha}\nabla u_{\alpha}\right|-\sum^{\frac{d(d+1)}{2}}_{\alpha=d+1}|C^{\alpha}||\nabla u_{\alpha}|-|\nabla u_{0}|\notag\\
\geq&\left|\sum^{d}_{\alpha=1}C^{\alpha}\partial_{x_{d}} u_{\alpha}^{\alpha_{0}}\right|-C\notag\\
\gtrsim&
\begin{cases}
\frac{\kappa_{1}^{\frac{d-1}{m}}|\det\mathbb{A}_{1}^{\ast\alpha_{0}}|}{|\mathcal{L}_{d}^{\alpha_{0}}|\det\mathbb{D}^{\ast}}\frac{1}{\varepsilon\rho_{0}(d,m;\varepsilon)},&d-1\leq m\leq d,\\
\frac{|\det\mathbb{F}_{3}^{\ast\alpha_{0}}[\varphi]|}{\det\mathbb{F}^{\ast}}\frac{1}{\varepsilon},&m<d-1,
\end{cases}
\end{align*}
where we utilized the fact that for $x\in\{x'=0'\}\cap\Omega$,
$$|\partial_{x_{d}}u_{\alpha_{0}}^{\alpha_{0}}|\geq|\partial_{x_{d}}\bar{u}_{\alpha_{0}}^{\alpha_{0}}|-|\partial_{x_{d}}(u_{\alpha_{0}}^{\alpha_{0}}-\bar{u}_{\alpha_{0}}^{\alpha_{0}})|\geq\frac{1}{C\varepsilon},$$
and
$$|\partial_{x_{d}}u_{\alpha}^{\alpha_{0}}|\leq|\partial_{x_{d}}\mathcal{F}_{\alpha}^{\alpha_{0}}|+|\partial_{x_{d}}(u_{\alpha}^{\alpha_{0}}-\bar{u}_{\alpha}^{\alpha_{0}})|\leq C,\quad \alpha=1,2,...,d,\,\alpha\neq\alpha_{0}.$$

$(\rm{ii})$ Let condition ({\bf{E1}}) hold for $m\geq d+k$. Observe that for $x\in\{x'=0'\}\cap\Omega$, we deduce from Corollary \ref{thm86} that $|\nabla u_{0}|\leq C$, and $|\nabla u_{\alpha}|\leq C$, $\alpha=d+1,...,\frac{d(d+1)}{2}$. On one hand, using \eqref{rep001}, we have
\begin{align*}
|\nabla u|\leq&\sum^{d}_{\alpha=1}|C^{\alpha}||\nabla u_{\alpha}|+\sum^{\frac{d(d+1)}{2}}_{\alpha=d+1}|C^{\alpha}||\nabla u_{\alpha}|+|\nabla u_{0}|\notag\\
\lesssim&\frac{\eta\kappa_{2}^{\frac{d-1}{m}}}{\kappa_{1}^{\frac{d+k-1}{m}}}\frac{\rho_{k}(d,m;\varepsilon)}{\varepsilon\rho_{0}(d,m;\varepsilon)}.
\end{align*}

On the other hand, it follows from \eqref{rep002} that
\begin{align*}
|\nabla u|\geq&\left|\sum^{d}_{\alpha=1}C^{\alpha}\nabla u_{\alpha}\right|-\sum^{\frac{d(d+1)}{2}}_{\alpha=d+1}|C^{\alpha}||\nabla u_{\alpha}|-|\nabla u_{0}|\notag\\
\geq&\left|\sum^{d}_{\alpha=1}C^{\alpha}\partial_{x_{d}} u_{\alpha}^{d}\right|-C\gtrsim\frac{\eta\kappa_{1}^{\frac{d-1}{m}}}{\kappa_{2}^{\frac{d+k-1}{m}}}\frac{\rho_{k}(d,m;\varepsilon)}{\rho_{0}(d,m;\varepsilon)},
\end{align*}
where we used the fact that for $x\in\{x'=0'\}\cap\Omega$,
$$|\partial_{x_{d}}u_{d}^{d}|\geq|\partial_{x_{d}}\bar{u}_{d}^{d}|-|\partial_{x_{d}}(u_{d}^{d}-\bar{u}_{d}^{d})|\geq\frac{1}{C\varepsilon},$$
and
$$|\partial_{x_{d}}u_{\alpha}^{d}|\leq|\partial_{x_{d}}\mathcal{F}_{\alpha}^{d}|+|\partial_{x_{d}}(u_{\alpha}^{d}-\bar{u}_{\alpha}^{d})|\leq C,\quad \alpha=1,...,d-1.$$

{\bf Step 5.} This step is devoted to the establishments of the optimal upper and lower bounds on the blow-up rate of the gradient on the cylinder surface $\{|x'|=\sqrt[m]{\varepsilon}\}\cap\Omega$.

$(\rm{i})$ Let condition ({\bf{E1}}), ({\bf{E2}}) or ({\bf{E3}}) hold for $d<m<d+k,\,k>1$. In view of \eqref{rep001}, \eqref{RTP001}, \eqref{RTP003} and \eqref{re001}--\eqref{re002}, it follows from decomposition \eqref{Le2.015} and Corollary \ref{thm86} again that for $x\in\{x'=(\sqrt[m]{\varepsilon},0,...,0)'\}\cap\Omega$,
\begin{align*}
|\nabla u|\leq&\sum^{\frac{d(d+1)}{2}}_{\alpha=d+1}|C^{\alpha}||\nabla u_{\alpha}|+\left(\sum^{d}_{\alpha=1}|C^{\alpha}||\nabla u_{\alpha}|+|\nabla u_{0}|\right)\notag\\
\lesssim&
\begin{cases}
\max\limits_{d+1\leq\alpha\leq\frac{d(d+1)}{2}}|\mathcal{L}_{d}^{\alpha}|^{-1}|Q_{\alpha}^{\ast}[\varphi]|\frac{\kappa_{2}^{\frac{d+1}{m}}}{1+\kappa_{1}}\frac{1}{\varepsilon^{1-1/m}\rho_{2}(d,m;\varepsilon)},&d+1\leq m<d+k,\\
\frac{\max\limits_{d+1\leq\alpha\leq\frac{d(d+1)}{2}}\det\mathbb{F}_{2}^{\ast\alpha}[\varphi]}{(1+\kappa_{1})\det\mathbb{D}^{\ast}}\frac{1}{\varepsilon^{1-1/m}},&d<m<d+1.
\end{cases}
\end{align*}

Due to the fact that for $x\in\{x'=(\sqrt[m]{\varepsilon},0,...,0)'\}\cap\Omega$,
\begin{align*}
|\partial_{x_{d}}u_{d+1}^{d}|\geq|\partial_{x_{d}}\bar{u}_{d+1}^{d}|-|\partial_{x_{d}}(u_{d+1}^{d}-\bar{u}_{d+1}^{d})|\geq\frac{1}{C\varepsilon^{1-1/m}},
\end{align*}
and
\begin{align*}
|\partial_{x_{d}}u_{\alpha}^{d}|\leq|\partial_{x_{d}}\mathcal{F}_{\alpha}^{d}|+|\partial_{x_{d}}(u_{\alpha}^{d}-\bar{u}_{\alpha}^{d})|\leq C,\quad \alpha=d+2,...,\frac{d(d+1)}{2},\,d\geq3,
\end{align*}
we obtain
\begin{align}\label{AMU001}
\left|\sum^{\frac{d(d+1)}{2}}_{\alpha=d+1}C^{\alpha}\nabla u_{\alpha}\right|\geq&\left|\sum^{\frac{d(d+1)}{2}}_{\alpha=d+1}C^{\alpha}\partial_{x_{d}}u_{\alpha}^{d}\right|\notag\\
\geq&\left|C^{d+1}\partial_{x_{d}}u_{d+1}^{d}\right|-
\begin{cases}
0,&d=2,\\
\Big|\sum\limits^{\frac{d(d+1)}{2}}_{\alpha=d+2}C^{\alpha}\partial_{x_{d}}u_{\alpha}^{d}\Big|,&d\geq3
\end{cases}\notag\\
\gtrsim&
\begin{cases}
\frac{\kappa_{2}^{\frac{d+1}{m}}}{1+\kappa_{2}}\frac{|Q_{d+1}^{\ast}[\varphi]|}{|\mathcal{L}_{d}^{d+1}|}\frac{1}{\varepsilon^{1-1/m}\rho_{2}(d,m;\varepsilon)},&d+1\leq m<d+k,\\
\frac{\det\mathbb{F}_{2}^{\ast d+1}[\varphi]}{(1+\kappa_{2})\det\mathbb{D}^{\ast}}\frac{1}{\varepsilon^{1-1/m}},&d<m<d+1,
\end{cases}
\end{align}
while, for $x\in\{x'=(\sqrt[m]{\varepsilon},0,...,0)'\}\cap\Omega$,
\begin{align}\label{AMU002}
|\nabla u_{0}|\leq\frac{C|x'|^{k}}{\varepsilon+\kappa_{1}|x'|^{m}}\lesssim
\begin{cases}
\frac{1}{\varepsilon^{1-k/m}},&m>k,\\
1,&m\leq k,
\end{cases}
\end{align}
and
\begin{align}\label{AMU003}
\sum^{d}_{\alpha=1}|C^{\alpha}||\nabla u_{\alpha}|\lesssim&
\begin{cases}
\frac{\rho_{k}(d,m;\varepsilon)}{\varepsilon\rho_{0}(d,m;\varepsilon)},&\text{if condition}\;({\bf{E1}})\;\text{holds},\\
\frac{1}{\varepsilon\rho_{0}(d,m;\varepsilon)},&\text{if condition ({\bf{E2}}) or ({\bf{E3}}) holds}.
\end{cases}
\end{align}
Therefore, from \eqref{AMU001}--\eqref{AMU003}, we have
\begin{align*}
|\nabla u|\geq&\left|\sum^{\frac{d(d+1)}{2}}_{\alpha=d+1}C^{\alpha}\nabla u_{\alpha}\right|-\sum^{d}_{\alpha=1}|C^{\alpha}||\nabla u_{\alpha}|-|\nabla u_{0}|\notag\\
\gtrsim&
\begin{cases}
\frac{\kappa_{2}^{\frac{d+1}{m}}}{1+\kappa_{2}}\frac{|Q_{d+1}^{\ast}[\varphi]|}{|\mathcal{L}_{d}^{d+1}|}\frac{1}{\varepsilon^{1-1/m}\rho_{2}(d,m;\varepsilon)},&d+1\leq m<d+k,\\
\frac{\det\mathbb{F}_{2}^{\ast d+1}[\varphi]}{(1+\kappa_{2})\det\mathbb{D}^{\ast}}\frac{1}{\varepsilon^{1-1/m}},&d<m<d+1.
\end{cases}
\end{align*}

$(\rm{ii})$ Let condition ({\bf{E2}}) hold for $m>d+k,\,k=1$. Similarly as before, we derive that for $x\in\{x'=(\sqrt[m]{\varepsilon},0,...,0)'\}\cap\Omega$,
\begin{align*}
|\nabla u|\leq&\sum^{\frac{d(d+1)}{2}}_{\alpha=d+1}|C^{\alpha}||\nabla u_{\alpha}|+\left(\sum^{d}_{\alpha=1}|C^{\alpha}||\nabla u_{\alpha}|+|\nabla u_{0}|\right)\notag\\
\lesssim&\frac{\eta\kappa_{2}^{\frac{d+1}{m}}}{(1+\kappa_{1})\kappa_{1}^{\frac{d+k}{m}}}\frac{\rho_{k+1}(d,m;\varepsilon)}{\varepsilon^{1-1/m}\rho_{2}(d,m;\varepsilon)},
\end{align*}
and
\begin{align*}
\left|\sum^{\frac{d(d+1)}{2}}_{\alpha=d+1}C^{\alpha}\nabla u_{\alpha}\right|\geq&\left|C^{d+1}\partial_{x_{d}}u_{d+1}^{d}\right|-
\begin{cases}
0,&d=2,\\
\Big|\sum\limits^{\frac{d(d+1)}{2}}_{\alpha=d+2}C^{\alpha}\partial_{x_{d}}u_{\alpha}^{d}\Big|,&d\geq3
\end{cases}\notag\\
\gtrsim&\frac{\eta\kappa_{1}^{\frac{d+1}{m}}}{(1+\kappa_{2})\kappa_{2}^{\frac{d+k}{m}}}\frac{\rho_{k+1}(d,m;\varepsilon)}{\varepsilon^{1-1/m}\rho_{2}(d,m;\varepsilon)},
\end{align*}
which, in combination with \eqref{AMU002}--\eqref{AMU003}, yields that
\begin{align*}
|\nabla u|\geq&\left|\sum^{\frac{d(d+1)}{2}}_{\alpha=d+1}C^{\alpha}\nabla u_{\alpha}\right|-\sum^{d}_{\alpha=1}|C^{\alpha}||\nabla u_{\alpha}|-|\nabla u_{0}|\notag\\
\gtrsim&\frac{\eta\kappa_{1}^{\frac{d+1}{m}}}{(1+\kappa_{2})\kappa_{2}^{\frac{d+k}{m}}}\frac{\rho_{k+1}(d,m;\varepsilon)}{\rho_{2}(d,m;\varepsilon)}.
\end{align*}

$(\rm{iii})$ Let condition ({\bf{E3}}) hold for $m>d+k,\,k\geq1,\,k\neq2$ or $m=d+k,\,k=1$. From \eqref{ATCG001}, we see that for $x\in\{x'=(\sqrt[m]{\varepsilon},0,...,0)'\}\cap\Omega$,
\begin{align*}
|\nabla u_{0}|\lesssim\frac{|\varphi(x',h(x'))|}{\varepsilon+\kappa_{1}|x'|^{m}}\lesssim\frac{\eta|x_{1}|^{k}}{\varepsilon+\kappa_{1}|x'|^{m}}\lesssim\frac{\eta}{1+\kappa_{1}}\frac{1}{\varepsilon^{1-k/m}},\\
|\nabla u_{0}|\gtrsim\frac{|\varphi^{1}(x',h(x'))|}{\varepsilon+\kappa_{2}|x'|^{m}}=\frac{\eta|x_{1}|^{k}}{\varepsilon+\kappa_{2}|x'|^{m}}\gtrsim\frac{\eta}{1+\kappa_{2}}\frac{1}{\varepsilon^{1-\frac{k}{m}}}.
\end{align*}
This, together with \eqref{RTP003}, reads that
\begin{align*}
|\nabla u|\leq&|\nabla u_{0}|+\sum^{\frac{d(d+1)}{2}}_{\alpha=1}|C^{\alpha}||\nabla u_{\alpha}|\lesssim\frac{\eta}{1+\kappa_{1}}\frac{1}{\varepsilon^{1-k/m}},
\end{align*}
and
\begin{align*}
|\nabla u|\geq&|\nabla u_{0}|-\sum^{\frac{d(d+1)}{2}}_{\alpha=1}|C^{\alpha}||\nabla u_{\alpha}|\gtrsim\frac{\eta}{1+\kappa_{2}}\frac{1}{\varepsilon^{1-k/m}}.
\end{align*}

$(\rm{iv})$ Now we proceed to consider the remaining two cases: condition ({\bf{E2}}) holds in the case of $m>d+k,\,k\geq1,\,k\neq2$ or $m=d+k,\,k=1$; condition ({\bf{E3}}) holds in the case of $m=d+k,\,k>2$. Similarly as above, it follows from \eqref{RTP001}, \eqref{RTP003} and Corollary \ref{thm86} that for $x\in\{x'=(\sqrt[m]{\varepsilon},0,...,0)'\}\cap\Omega$, if condition ({\bf{E2}}) holds for $m>d+k,\,k\geq1,\,k\neq2$ or $m=d+k,\,k=1$,
\begin{align*}
|\nabla u|\leq&(|C^{d+1}||\nabla u_{d+1}|+|\nabla u_{0}|)+\sum^{\frac{d(d+1)}{2}}_{\alpha=1,\alpha\neq d+1}|C^{\alpha}||\nabla u_{\alpha}|\notag\\
\lesssim&
\frac{\eta(\kappa_{2}^{\frac{d+1}{m}}\kappa_{1}^{-\frac{d+k}{m}}+1)}{1+\kappa_{1}}\frac{1}{\varepsilon^{1-k/m}},
\end{align*}
and, if condition ({\bf{E3}}) holds for $m=d+k,\,k>2$,
\begin{align*}
|\nabla u|\leq&\left(\sum^{\frac{d(d+1)}{2}}_{\alpha=d+1}|C^{\alpha}||\nabla u_{\alpha}|+|\nabla u_{0}|\right)+\sum^{d}_{\alpha=1}|C^{\alpha}||\nabla u_{\alpha}|\notag\\
\lesssim&\frac{\max\limits_{d+1\leq\alpha\leq\frac{d(d+1)}{2}}\kappa_{2}^{\frac{d+1}{m}}|\mathcal{L}_{d}^{\alpha}|^{-1}|Q^{\ast}_{\alpha}[\varphi]|+\eta}{1+\kappa_{1}}\frac{1}{\varepsilon^{1-k/m}}.
\end{align*}

On the other hand, utilizing \eqref{RTP002} and \eqref{RTP005}, we deduce that
\begin{align*}
&\left|\sum^{\frac{d(d+1)}{2}}_{\alpha=d+1}C^{\alpha}\nabla u_{\alpha}+\nabla u_{0}\right|\geq\left|\sum^{\frac{d(d+1)}{2}}_{\alpha=d+1}C^{\alpha}\partial_{x_{d}} u_{\alpha}^{d}+\partial_{x_{d}}u_{0}^{d}\right|\notag\\
\geq&\left|C^{d+1}\partial_{x_{d}}u_{d+1}^{d}+\partial_{x_{d}}u_{0}^{d}\right|-
\begin{cases}
0,&d=2,\\
\Big|\sum\limits^{\frac{d(d+1)}{2}}_{\alpha=d+2}C^{\alpha}\partial_{x_{d}}u_{\alpha}^{d}\Big|,&d\geq3
\end{cases}\notag\\
\gtrsim&\left|C^{d+1}\partial_{x_{d}}\bar{u}_{d+1}^{d}+\partial_{x_{d}}\bar{u}_{0}^{d}\right|\notag\\
\gtrsim&\frac{1}{\varepsilon^{1-k/m}}
\begin{cases}
\frac{\eta(\kappa_{1}^{\frac{d+1}{m}}\kappa_{2}^{-\frac{d+k}{m}}+1)}{1+\kappa_{2}},&\text{if condition}\;({\bf{E2}})\;\text{holds for}\;m=d+k,k=1\\
&\text{or}\;m>d+k,k\geq1,k\neq2,\\
\frac{\kappa_{1}^{\frac{d+1}{m}}|Q^{\ast}_{d+1}[\varphi]|}{(1+\kappa_{2})\mathcal{L}_{d}^{d+1}},&\text{if condition}\;({\bf{E3}})\;\text{holds for}\;m=d+k,k>2,
\end{cases}
\end{align*}
which, in combination with \eqref{AMU003}, yields that
\begin{align*}
|\nabla u|\geq&\left|\sum^{\frac{d(d+1)}{2}}_{\alpha=d+1}C^{\alpha}\nabla u_{\alpha}+\nabla u_{0}\right|-\sum^{d}_{\alpha=1}|C^{\alpha}||\nabla u_{\alpha}|\notag\\
\gtrsim&\frac{1}{\varepsilon^{1-k/m}}
\begin{cases}
\frac{\eta(\kappa_{1}^{\frac{d+1}{m}}\kappa_{2}^{-\frac{d+k}{m}}+1)}{1+\kappa_{2}},&\text{if condition}\;({\bf{E2}})\;\text{holds for}\;m>d+k,\,k>2\\
&\text{or}\;m=d+k,\,k=1,\\
\frac{\kappa_{1}^{\frac{d+1}{m}}|Q^{\ast}_{d+1}[\varphi]|}{(1+\kappa_{2})\mathcal{L}_{d}^{d+1}},&\text{if condition}\;({\bf{E3}})\;\text{holds for}\;m=d+k,\,k>2,
\end{cases}
\end{align*}
where in the case of condition ({\bf{E3}}) we used the fact that $\varphi^{d}=0$ on $\Gamma^{-}_{R}$.

\end{proof}

\section{Asymptotic expansions of the stress concentration with strictly convex inclusions}\label{SEC005}
For future application, in this section we aim to establish the asymptotic expansions of the stress concentration when one strictly convex inclusion approaches the matrix boundary closely. Suppose that $h_{1}$ and $h$ satisfy
\begin{enumerate}
{\it\item[(\bf{Q1})]
$h_{1}(x')>h(x'),\;\mbox{if}\;\,x'\in B'_{2R}\setminus\{0'\},$
\item[(\bf{Q2})]
$h_{1}(0')=h(0')=\nabla_{x'}h_{1}(0')=\nabla_{x'}h(0')=0$,
\item[(\bf{Q3})]
$\nabla_{x'}^{2}(h_{1}(0')-h(0'))\geq\tau_{0} I$,
\item[(\bf{Q4})]
$\|h_{1}\|_{C^{3,1}(B'_{2R})}+\|h\|_{C^{3,1}(B'_{2R})}\leq \tau,$}
\end{enumerate}
where $\tau_{0}$ and $\tau$ are two positive constants independent of $\varepsilon$ and $I$ denotes the $(d-1)\times(d-1)$ identity matrix. Our main result in this section is as follows:
\begin{theorem}\label{THMM}
Assume that $D_{1}\subset D\subseteq\mathbb{R}^{d}\,(d\geq2)$ are defined as above, conditions $\mathrm{(}${\bf{Q1}}$\mathrm{)}$--$\mathrm{(}${\bf{Q4}}$\mathrm{)}$ hold, and $\varphi\in C^{2}(\partial D;\mathbb{R}^{d})$. Let $u\in H^{1}(D;\mathbb{R}^{d})\cap C^{1}(\overline{\Omega};\mathbb{R}^{d})$ be the solution of \eqref{La.002}. Then for a sufficiently small $\varepsilon>0$, $x\in\Omega_{R}$,

$(\rm{i})$ for $d=2$, if $\nabla_{x'}\varphi(0)=0$, then
\begin{align*}
\nabla u=&\sum^{2}_{\alpha=1}\frac{\det\mathbb{F}_{1}^{\ast\alpha}[\varphi]}{\det\mathbb{D}^{\ast}}\frac{\sqrt{\tau_{1}}\sqrt{\varepsilon}}{\sqrt{2}\pi\mathcal{L}_{2}^{\alpha}}\frac{1+O(\varepsilon^{\frac{1}{24}})}{1+\mathcal{G}_{2}^{\ast\alpha}\sqrt{\varepsilon}}\nabla\bar{u}_{\alpha}\notag\\
&+\frac{\det\mathbb{F}_{2}^{\ast3}[\varphi]}{\det\mathbb{D}^{\ast}}(1+O(\varepsilon^{\frac{1}{24}}))\nabla\bar{u}_{3}+O(1)\|\varphi\|_{C^{2}(\partial D)};
\end{align*}

$(\rm{ii})$ for $d=3$, then
\begin{align*}
\nabla u=&\sum_{\alpha=1}^{3}\frac{\det\mathbb{F}_{1}^{\ast\alpha}[\varphi]}{\det\mathbb{D}^{\ast}}\frac{\sqrt{\tau_{1}\tau_{2}}}{2\pi\mathcal{L}_{3}^{\alpha}}\frac{1+O(|\ln\varepsilon|^{-1})}{|\ln\varepsilon|+\mathcal{G}_{3}^{\ast\alpha}}\nabla\bar{u}_{\alpha}\notag\\
&+\sum^{6}_{\alpha=4}\frac{\det\mathbb{F}_{2}^{\ast\alpha}[\varphi]}{\det\mathbb{D}^{\ast}}(1+O(\varepsilon^{\frac{d-1}{24}}))\nabla\bar{u}_{\alpha}+O(1)\|\varphi\|_{C^{2}(\partial D)};
\end{align*}

$(\rm{iii})$ for $d\geq4$, then
\begin{align*}
\nabla u=\sum^{\frac{d(d+1)}{2}}_{\alpha=1}\frac{\det\mathbb{F}_{3}^{\ast\alpha}[\varphi]}{\det\mathbb{F}^{\ast}}(1+O(\varepsilon^{\min\{\frac{1}{6},\frac{d-3}{24}\}}))\nabla\bar{u}_{\alpha}+O(1)\|\varphi\|_{C^{2}(\partial D)},
\end{align*}
where $\tau_{1}$ and $\tau_{2}$ are the eigenvalues of $\nabla^{2}_{x'}(h_{1}-h)(0')$, $\mathcal{L}_{d}^{\alpha}$, $\alpha=1,2,...,d$ are defined by \eqref{AZ}--\eqref{AZ110},  the blow-up factor matrices $\mathbb{D}$, $\mathbb{F}_{1}^{\alpha}[\varphi],\,\alpha=1,2,...,d$, $\mathbb{F}_{2}^{\alpha}[\varphi],\,\alpha=d+1,...,\frac{d(d+1)}{2}$, $\mathbb{F}_{3}^{\alpha}[\varphi],\,\alpha=1,2,...,\frac{d(d+1)}{2}$, are defined by \eqref{ALHNCZ001}, \eqref{QKT001}--\eqref{QKT002} and \eqref{PGNC}, respectively, the geometry constants $\mathcal{G}_{d}^{\ast\alpha}$, $d=2,3$ are defined by \eqref{IMT001} below.

\end{theorem}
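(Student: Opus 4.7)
The plan is to combine the tools already developed in Sections \ref{SL003}--\ref{SL004} with sharper asymptotic evaluations of the matrix elements $a_{\alpha\beta}$ and the blow-up factors $Q_{\alpha}[\varphi]$ in the strictly convex case $m=2$. Starting from the decomposition \eqref{Le2.015} and the linear system \eqref{MWNA001}, I would use Cramer's rule to express each free constant $C^{\alpha}$ as a ratio of determinants, then apply Corollary \ref{thm86} to replace each $\nabla u_{\alpha}$ by $\nabla\bar{u}_{\alpha}$ modulo a remainder of order $O(\delta^{(m-2)/m})=O(1)$ (since $m=2$) for $\alpha=1,\dots,d$ and $O(1)$ for higher indices. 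The residual $O(1)\|\varphi\|_{C^{2}(\partial D)}$ in each displayed formula absorbs all of these error terms as well as $\nabla u_{0}$ on $\Omega\setminus\Omega_{R}$.

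The main computational input is the sharp asymptotics of the diagonal elements $a_{\alpha\alpha}$ for $\alpha=1,\dots,d$. Under $\mathrm{(}${\bf{Q1}}$\mathrm{)}$--$\mathrm{(}${\bf{Q4}}$\mathrm{)}$, Taylor expansion at the origin gives $h_{1}(x')-h(x')=\tfrac{1}{2}(\tau_{1}x_{1}^{2}+\tau_{2}x_{2}^{2})+O(|x'|^{3})$ where $\tau_{1},\tau_{2}$ are the eigenvalues of $\nabla_{x'}^{2}(h_{1}-h)(0')$. Plugging this into \eqref{ZPH001} (or its analog obtained in Lemma \ref{lemmabc}) and carrying out the elementary integrals
\begin{align*}
\int_{|x'|<R}\frac{dx'}{\varepsilon+\tfrac{1}{2}\sum_{i}\tau_{i}x_{i}^{2}}=
\begin{cases}
\dfrac{\sqrt{2}\,\pi}{\sqrt{\tau_{1}\varepsilon}}+O(1),&d=2,\\[4pt]
\dfrac{2\pi}{\sqrt{\tau_{1}\tau_{2}}}|\ln\varepsilon|+O(1),&d=3,\\[4pt]
O(1),&d\geq 4,
\end{cases}
\end{align*}
yields $a_{\alpha\alpha}=\mathcal{L}_{d}^{\alpha}\cdot(\text{leading integral})+\mathcal{L}_{d}^{\alpha}\mathcal{G}_{d}^{\ast\alpha}+O(\varepsilon^{1/24})$ for $\alpha=1,\dots,d$, where $\mathcal{G}_{d}^{\ast\alpha}$ is defined through the limit of the remainder $M_{d}^{\ast\alpha}$ in \eqref{LTFA001}. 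For $d\geq 4$ all diagonal and off-diagonal elements converge to their starred counterparts, whereas for $d=2,3$ the entries $a_{\alpha\alpha}$, $\alpha=1,\dots,d$, are the only ones that blow up. The sharp asymptotics of $Q_{\alpha}[\varphi]$ and $a_{\alpha\beta}$ for $\alpha\geq d+1$ or $\alpha\neq\beta$ follow directly from Lemmas \ref{KM323}--\ref{lemmabc} (the condition $\nabla_{x'}\varphi(0)=0$ in case $d=2$ is imposed precisely to make the leading contribution to $Q_{\alpha}[\varphi]$, $\alpha=1,2$, converge to $Q_{\alpha}^{\ast}[\varphi]$ at the required rate).

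Expanding $\det\mathbb{F}_{1}^{\alpha}[\varphi]$, $\det\mathbb{F}_{2}^{\alpha}[\varphi]$ and $\det\mathbb{D}$ along the appropriate rows/columns, the singular factors $a_{\alpha\alpha}$, $\alpha=1,\dots,d$, appear only in $\det\mathbb{F}^{\ast}$-type quantities; one factors them out and uses the convergence of the remaining block $\mathbb{D}$ (respectively $\mathbb{F}$) to $\mathbb{D}^{\ast}$ (respectively $\mathbb{F}^{\ast}$) proved in Lemma \ref{lemmabc}. For $d=2,3$ this yields
\begin{align*}
C^{\alpha}\simeq\frac{\det\mathbb{F}_{1}^{\ast\alpha}[\varphi]}{\det\mathbb{D}^{\ast}}\cdot\frac{1}{a_{\alpha\alpha}},\quad \alpha=1,\dots,d,\qquad C^{\alpha}\simeq\frac{\det\mathbb{F}_{2}^{\ast\alpha}[\varphi]}{\det\mathbb{D}^{\ast}},\quad \alpha=d+1,\dots,\tfrac{d(d+1)}{2},
\end{align*}
into which one substitutes the sharp expansion of $a_{\alpha\alpha}$ to obtain the prefactors $\tfrac{\sqrt{\tau_{1}}\sqrt{\varepsilon}}{\sqrt{2}\pi\mathcal{L}_{2}^{\alpha}(1+\mathcal{G}_{2}^{\ast\alpha}\sqrt{\varepsilon})}$ and $\tfrac{\sqrt{\tau_{1}\tau_{2}}}{2\pi\mathcal{L}_{3}^{\alpha}(|\ln\varepsilon|+\mathcal{G}_{3}^{\ast\alpha})}$ announced in the theorem. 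For $d\geq 4$ all entries converge and one simply applies Cramer's rule with $\mathbb{F}\to\mathbb{F}^{\ast}$, giving $C^{\alpha}\simeq\det\mathbb{F}_{3}^{\ast\alpha}[\varphi]/\det\mathbb{F}^{\ast}$.

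The main obstacle is the bookkeeping needed to identify and name the $\varepsilon$-independent geometric constants $\mathcal{G}_{d}^{\ast\alpha}$ for $d=2,3$: one must show that the difference $a_{\alpha\alpha}-\mathcal{L}_{d}^{\alpha}\cdot(\text{leading divergent integral})$ admits a finite limit as $\varepsilon\to 0^{+}$, with quantitative rate $O(\varepsilon^{1/24})$ (respectively $O(|\ln\varepsilon|^{-1})$). This requires splitting $a_{\alpha\alpha}$ as in \eqref{a1111} with cut-off at $|x'|=\varepsilon^{1/24}$, carefully tracking the cross terms involving the correction $\mathcal{F}_{\alpha}$ from \eqref{OPAKLN01}, and invoking the uniform convergence $\nabla u_{\alpha}\to\nabla u_{\alpha}^{\ast}$ on $\Omega^{\ast}\setminus\mathcal{C}_{\varepsilon^{1/(2m)}}$ established in \eqref{con035} and \eqref{KKTN}. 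Once $\mathcal{G}_{d}^{\ast\alpha}$ is produced in this way, the remaining verification is routine linear algebra combined with the bounds from Corollary \ref{thm86}.
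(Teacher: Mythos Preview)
Your proposal is correct and follows essentially the same route as the paper: the paper also isolates the sharp two-term asymptotics of the diagonal entries $a_{\alpha\alpha}$, $\alpha=1,\dots,d$ (its Lemma \ref{PLNDZ}), inverts them via a geometric-series expansion to produce the prefactors involving $\mathcal{G}_{d}^{\ast\alpha}$, invokes Lemmas \ref{KM323}--\ref{lemmabc} for the convergence of all remaining entries and of $Q_{\alpha}[\varphi]$ to their starred versions, and then applies Cramer's rule with the block structure exactly as in \eqref{TGV001} and \eqref{re003}. One small normalization point: in the paper the constant term in $a_{\alpha\alpha}$ is called $\mathcal{K}_{d}^{\ast\alpha}$ and $\mathcal{G}_{d}^{\ast\alpha}$ is defined as $\mathcal{K}_{d}^{\ast\alpha}$ divided by the leading Lam\'e--curvature coefficient (see \eqref{IMT001}), so your expression $\mathcal{L}_{d}^{\alpha}\mathcal{G}_{d}^{\ast\alpha}$ for that constant term is off by a curvature factor, but this is cosmetic.
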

\begin{remark}
In contrast to the previous work \cite{BJL2017}, we improve the gradient estimates there to be the precise asymptotic expressions in Theorem \ref{THMM} here. We additionally point out that the geometry constants $\mathcal{G}_{d}^{\ast\alpha}$, $d=2,3$ depend not on the distance parameter $\varepsilon$ and the length $R$ of the narrow region $\Omega_{R}$, which is critical to numerical computations and simulations in future investigations. Although our asymptotic results can be generalized to more general $m$-convex inclusions, we restrict ourselves to the setup above for the convenience of computation and presentation.
\end{remark}

To begin with, a direct application of Lemma \ref{KM323} yields the following corollary.
\begin{corollary}\label{COO001}
Assume as in Theorem \ref{THMM}. Then for a sufficiently small $\varepsilon>0$,

$(\rm{i})$ for $\alpha=1,2,...,d$,
\begin{align}\label{LGAJ001}
Q_{\alpha}[\varphi]=Q_{\alpha}^{\ast}[\varphi]+O(1)&
\begin{cases}
\varepsilon^{\frac{1}{6}},&d=2,\\
\varepsilon^{\frac{d-2}{2d}},&d\geq3;
\end{cases}
\end{align}

$(\rm{ii})$ for $\alpha=d+1,...,\frac{d(d+1)}{2}$,
\begin{align}\label{LGAJ002}
Q_{\alpha}[\varphi]=Q_{\alpha}^{\ast}[\varphi]+O(1)&
\begin{cases}
\varepsilon^{\frac{1}{4}},&d=2,\\
\varepsilon^{\frac{d-1}{2(d+1)}},&d\geq3,
\end{cases}
\end{align}
where $Q_{\alpha}^{\ast}[\varphi]$, $\alpha=1,2,...,\frac{d(d+1)}{2}$ are defined in \eqref{FNCL001}.
\end{corollary}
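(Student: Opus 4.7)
The plan is to deduce Corollary \ref{COO001} as a direct specialization of Lemma \ref{KM323} to the strictly convex setting. First I would observe that conditions $(\mathbf{Q1})$--$(\mathbf{Q4})$ correspond to the exponent $m=2$ in $(\mathbf{H1})$--$(\mathbf{H3})$, with curvature constants comparable to $\tau_{0}$. Next, to fit $\varphi$ into the growth-condition framework \eqref{growth}, I would use $\varphi(0)=0$ together with $\varphi\in C^{2}(\partial D;\mathbb{R}^{d})$: Taylor's theorem gives $|\varphi(x)|\leq C\|\varphi\|_{C^{2}(\partial D)}|x|$ near the origin, i.e.\ \eqref{growth} with $k=1$ and $\eta=C\|\varphi\|_{C^{2}(\partial D)}$. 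For $d=2$ the implicit assumption $\nabla_{x'}\varphi(0)=0$ inherited from Theorem \ref{THMM} upgrades this to $k=2$.

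Because Corollary \ref{COO001} assumes none of the parity conditions $(\mathbf{E1})$--$(\mathbf{E3})$, I would read every error bound in Lemma \ref{KM323} as the worst of the three sub-cases and then substitute $m=2$ with the value of $k$ above. For part (i) with $\alpha=1,\dots,d$, the $(\mathbf{E1})$-bound $\varepsilon^{(d+k-1-m)/(2(d+k-1))}$ (valid for $m<d+k-1$) has a smaller exponent than the $(\mathbf{E2})/(\mathbf{E3})$-bound $\varepsilon^{(d+k-2)/(2(m+d+k-2))}$ — a one-line comparison of $(d-2)(d+1)$ with $(d-1)d$ confirms this — and therefore dominates; substituting yields $\varepsilon^{1/6}$ when $(d,k)=(2,2)$ and $\varepsilon^{(d-2)/(2d)}$ when $d\geq 3,\, k=1$. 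For part (ii) with $\alpha=d+1$, the $(\mathbf{E2})$-bound $\varepsilon^{(d+k-m)/(m(d+k))}$ dominates the $(\mathbf{E1})/(\mathbf{E3})$-bound $\varepsilon^{(d+k-1)/(m(m+d+k-1))}$ by an analogous comparison of $(d-1)(d+2)$ with $d(d+1)$, yielding $\varepsilon^{1/4}$ when $(d,k)=(2,2)$ and $\varepsilon^{(d-1)/(2(d+1))}$ when $d\geq 3,\, k=1$. For $\alpha\geq d+2$ (only when $d\geq 3$), Lemma \ref{KM323}(iii) gives the strictly better exponent $(d+k-1)/(m(m+d+k-1))$, so it is absorbed into the $\alpha=d+1$ bound.

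The only genuinely delicate point is the well-definedness of $Q_{\alpha}^{\ast}[\varphi]$ in the critical dimension $d=2$, where $u_{0}^{\ast}$ has a logarithmic singularity at the touching point; the assumption $\nabla_{x'}\varphi(0)=0$ furnishes exactly the extra vanishing needed for the boundary integral defining $Q_{\alpha}^{\ast}[\varphi]$ to converge. I expect this verification — rather than the error computations themselves — to be the main obstacle, since it determines which value of $k$ one is entitled to use in each dimension. Once this is settled, the argument of Lemma \ref{KM323} applies verbatim with the substitutions above, producing the two error exponents claimed in \eqref{LGAJ001} and \eqref{LGAJ002}.
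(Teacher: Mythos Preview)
Your proposal is correct and follows the same approach as the paper, which simply states that the corollary is ``a direct application of Lemma \ref{KM323}.'' Your elaboration is accurate: the worst-case exponents across the three sub-cases of Lemma \ref{KM323} are precisely the ones whose derivation uses only the growth condition \eqref{growth} and not the parity structure of $(\mathbf{E1})$--$(\mathbf{E3})$, so they apply to a general $\varphi$; your identification of $k=2$ for $d=2$ (via $\nabla_{x'}\varphi(0)=0$) and $k=1$ for $d\geq 3$ (via $\varphi(0)=0$) is exactly what makes the substitutions into the relevant error terms produce the stated exponents.
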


\begin{lemma}\label{PLNDZ}
Assume as in Theorem \ref{THMM}. Then for a sufficiently small $\varepsilon>0$, if $\alpha=1,2,...,d$,

$(\rm{i})$ for $d=2$,
\begin{align*}
a_{\alpha\alpha}=&\frac{\sqrt{2}\pi\mathcal{L}_{2}^{\alpha}}{\sqrt{\tau_{1}}}\frac{1}{\sqrt{\varepsilon}}+\mathcal{K}_{2}^{\ast\alpha}+O(1)\varepsilon^{\frac{1}{24}};
\end{align*}

$(\rm{ii})$ for $d=3$,
\begin{align*}
a_{\alpha\alpha}=&\frac{2\pi\mathcal{L}_{3}^{\alpha}}{\sqrt{\tau_{1}\tau_{2}}}|\ln\varepsilon|+\mathcal{K}_{3}^{\ast\alpha}+O(1)\varepsilon^{\frac{1}{12}},
\end{align*}
where $\mathcal{K}_{d}^{\ast\alpha}$, $d=2,3$ are defined by \eqref{MGAT001}, $\tau_{1}$ and $\tau_{2}$ are the eigenvalues of $\nabla^{2}_{x'}(h_{1}-h)(0')$.
\end{lemma}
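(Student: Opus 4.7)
\textbf{Proof proposal for Lemma \ref{PLNDZ}.}
The plan is to specialize the computation of $a_{\alpha\alpha}$ already carried out in Step~1 of the proof of Lemma~\ref{lemmabc} to the strictly convex setting, which corresponds to $m=2$ with the stronger non-degeneracy \textbf{(Q3)}. Recall from \eqref{ZPH001} that, for $\alpha=1,\dots,d$ and with $\bar\gamma=\tfrac{1}{12m}=\tfrac{1}{24}$,
\begin{align*}
a_{\alpha\alpha}=\mathcal{L}_d^\alpha\!\!\int_{\varepsilon^{\bar\gamma}<|x'|<R}\!\!\frac{dx'}{h_1-h}+\mathcal{L}_d^\alpha\!\!\int_{|x'|<\varepsilon^{\bar\gamma}}\!\!\frac{dx'}{\varepsilon+h_1-h}+M_d^{\ast\alpha}+O\big(\varepsilon^{\min\{1/6,(d-1)/24\}}\big),
\end{align*}
and the error term already equals $\varepsilon^{1/24}$ for $d=2$ and $\varepsilon^{1/12}$ for $d=3$, as required. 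Conditions \textbf{(Q1)}--\textbf{(Q4)} together with Taylor's theorem give $h_1(x')-h(x')=\tfrac12\langle Hx',x'\rangle+\zeta(x')$, where $H:=\nabla^2_{x'}(h_1-h)(0')$ has eigenvalues $\tau_1,\dots,\tau_{d-1}\geq\tau_0>0$ and $|\zeta(x')|\leq C|x'|^3$.

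First I would diagonalize $H$ by a rotation of $x'$ and rescale $y_i=\sqrt{\tau_i/2}\,x_i$, so that $\tfrac12\langle Hx',x'\rangle = |y|^2$ and $dx'=\tfrac{2^{(d-1)/2}}{\sqrt{\tau_1\cdots\tau_{d-1}}}\,dy$. For $d=2$ the inner integral then reduces to an elementary $\arctan$:
\begin{align*}
\int_{|x_1|<\varepsilon^{\bar\gamma}}\frac{dx_1}{\varepsilon+\tfrac{\tau_1}{2}x_1^2}=\frac{2\sqrt{2}}{\sqrt{\tau_1\varepsilon}}\arctan\!\Big(\sqrt{\tfrac{\tau_1}{2\varepsilon}}\,\varepsilon^{\bar\gamma}\Big)=\frac{\sqrt{2}\,\pi}{\sqrt{\tau_1\varepsilon}}+O\!\big(\varepsilon^{1/2-\bar\gamma}\big),
\end{align*}
which supplies the leading $\varepsilon^{-1/2}$. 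For $d=3$, polar coordinates in $y$ give
\begin{align*}
\int_{|x'|<\varepsilon^{\bar\gamma}}\frac{dx'}{\varepsilon+\tfrac12\langle Hx',x'\rangle}=\frac{2\pi}{\sqrt{\tau_1\tau_2}}\ln\!\Big(1+\frac{c\,\varepsilon^{2\bar\gamma}}{\varepsilon}\Big)=\frac{2\pi(1-2\bar\gamma)}{\sqrt{\tau_1\tau_2}}|\ln\varepsilon|+O(1),
\end{align*}
while the outer integral over $\varepsilon^{\bar\gamma}<|x'|<R$ contributes $\tfrac{4\pi\bar\gamma}{\sqrt{\tau_1\tau_2}}|\ln\varepsilon|+O(1)$ by the same rescaling. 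Adding the two yields the stated $\tfrac{2\pi}{\sqrt{\tau_1\tau_2}}|\ln\varepsilon|$ coefficient, the $\bar\gamma$ dependence cancelling.

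The constant $\mathcal{K}_d^{\ast\alpha}$ is then defined to be the sum of $M_d^{\ast\alpha}$ and the $O(1)$ remainders of the two integrals above, both of which are independent of $\varepsilon$ as $\varepsilon\downarrow0$ and depend only on $H$, $R$, and the $C^{3,1}$ profile of $h_1-h$; this furnishes the definition \eqref{MGAT001}.

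The main obstacle is verifying that the cubic perturbation $\zeta(x')$ does not spoil the stated error orders. The correction is controlled by
\begin{align*}
\int\frac{|\zeta(x')|\,dx'}{(\varepsilon+\tfrac12\langle Hx',x'\rangle)^2}\lesssim\int\frac{|x'|^3\,dx'}{(\varepsilon+|x'|^2)^2},
\end{align*}
which evaluates to $O(\varepsilon^{1/2})$ for $d=2$ and $O(1)$ for $d=3$, each absorbed into $\mathcal{K}_d^{\ast\alpha}$ up to an $\varepsilon$-vanishing remainder dominated by the background error $\varepsilon^{\min\{1/6,(d-1)/24\}}$. A careful split of the cubic-remainder integral at the scale $|x'|\sim\varepsilon^{\bar\gamma}$ is the sole bookkeeping step needed; after that, collecting all terms yields the claimed expansions.
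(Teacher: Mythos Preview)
Your outline follows the paper's approach closely---starting from \eqref{ZPH001}, replacing $h_1-h$ by its quadratic part, and evaluating the resulting explicit integrals---but there is a genuine gap in your treatment of the cubic remainder $\zeta(x')$ in dimension $d=2$. You claim that
\[
\int \frac{|\zeta(x')|\,dx'}{(\varepsilon+\tfrac12\langle Hx',x'\rangle)^2}\lesssim\int\frac{|x'|^3\,dx'}{(\varepsilon+|x'|^2)^2}=O(\varepsilon^{1/2})\quad(d=2),
\]
but this is false: for $d=2$ the right-hand integral is
\[
\int_0^R\frac{s^3}{(\varepsilon+s^2)^2}\,ds=\tfrac12\Big[\ln\tfrac{R^2+\varepsilon}{\varepsilon}+\tfrac{\varepsilon}{R^2+\varepsilon}-1\Big]\sim\tfrac12|\ln\varepsilon|,
\]
which diverges and certainly cannot be absorbed into a constant $\mathcal{K}_2^{\ast\alpha}$ plus $O(\varepsilon^{1/24})$.

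The paper closes this gap by exploiting a cancellation you omit: the cubic part $\sum_{|\alpha|=3}C_\alpha x'^\alpha$ of the Taylor expansion is an \emph{odd} function of $x'$, and the weight $(\varepsilon+\tfrac12\langle Hx',x'\rangle)^{-2}$ is even, so over the symmetric ball $|x'|<\varepsilon^{\bar\gamma}$ the cubic contribution integrates to zero exactly. Only the quartic and higher remainder survives, and that contributes $\int_{|x'|<\varepsilon^{\bar\gamma}}O(|x'|^4)/(\varepsilon+|x'|^2)^2\,dx'=O(\varepsilon^{(d-1)\bar\gamma})$, which is small enough. The same parity argument is applied to the outer integral over $\varepsilon^{\bar\gamma}<|x'|<R$. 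Without this symmetry observation your scheme does not reach the stated error in $d=2$; your $d=3$ bound of $O(1)$ is correct in magnitude, but you would still need to show the $\varepsilon$-dependent part is $O(\varepsilon^{1/12})$, and the oddness argument handles this cleanly as well.
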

\begin{remark}
We here would like to point out that Li, Li and Yang \cite{LLY2019} were the first to give the precise computation of the energy for the perfect conductivity problem in the presence of two strictly convex inclusions.
\end{remark}
\begin{proof}
Analogous to \eqref{ZPH001}, we obtain that for $\alpha=1,2,...,d$,
\begin{align}\label{LRTA06}
a_{\alpha\alpha}=&\mathcal{L}_{d}^{\alpha}\left(\int_{\varepsilon^{\frac{1}{24}}<|x'|<R}\frac{dx'}{h_{1}(x')-h(x')}+\int_{|x'|<\varepsilon^{\frac{1}{24}}}\frac{dx'}{\varepsilon+h_{1}(x')-h(x')}\right)\nonumber\\
&+M_{d}^{\ast\alpha}+O(1)\varepsilon^{\min\{\frac{1}{6},\frac{d-1}{24}\}},
\end{align}
where $M_{d}^{\ast\alpha}$ is defined by \eqref{LTFA001}. According to the assumed conditions ({\bf{Q1}})--({\bf{Q4}}), after a rotation of the coordinates if necessary, we have
\begin{align*}
h_{1}(x')-h(x')=\sum^{d-1}_{i=1}\frac{\tau_{i}}{2}x_{i}^{2}+\sum_{|\alpha|=3}C_{\alpha}x'^{\alpha}+O(|x'|^{4}),
\end{align*}
where $\mathrm{diag}(\tau_{1},...,\tau_{d-1})=\nabla_{x'}^{2}(h_{1}-h)(0')$, $C_{\alpha}$ are some constants, $\alpha$ is an $(d-1)$-dimensional multi-index. In fact, $\tau_{1},...,\tau_{d-1}$ are the relative principal curvatures of $\partial D_{1}$ and $\partial D$ at the origin. Observe that
\begin{align}\label{LATCZ001001}
&\int_{|x'|<\varepsilon^{\frac{1}{24}}}\frac{1}{\varepsilon+h_{1}-h}-\int_{|x'|<\varepsilon^{\bar{\gamma}}}\frac{1}{\varepsilon+\sum^{d-1}_{i=1}\frac{\tau_{i}}{2}x_{i}^{2}}\notag\\
&=\int_{|x'|<\varepsilon^{\frac{1}{24}}}\frac{1}{\varepsilon+\sum^{d-1}_{i=1}\frac{\tau_{i}}{2}x_{i}^{2}}\left[\bigg(1+\frac{\sum_{|\alpha|=3}C_{\alpha}x'^{\alpha}}{\varepsilon+\sum^{d-1}_{i=1}\frac{\lambda_{i}}{2}x_{i}^{2}}+O(|x'|^{2})\bigg)^{-1}-1\right]\notag\\
&=-\int_{|x'|<\varepsilon^{\frac{1}{24}}}\frac{\sum_{|\alpha|=3}C_{\alpha}x'^{\alpha}}{(\varepsilon+\sum^{d-1}_{i=1}\frac{\tau_{i}}{2}x_{i}^{2})^{2}}+\int_{|x'|<\varepsilon^{\frac{1}{24}}}O(1)=O(\varepsilon^{\frac{d-1}{24}}),
\end{align}
where in the last line the Taylor expansion was utilized in virtue of the smallness of $R$ and we used the fact that $\sum_{|\alpha|=3}C_{\alpha}x'^{\alpha}$ is odd and the integrating domain is symmetric. Analogously, we get
\begin{align}\label{LATCZ002001}
&\int_{\varepsilon^{\frac{1}{24}}<|x'|<R}\frac{1}{h_{1}-h}-\int_{\varepsilon^{\frac{1}{24}}<|x'|<R}\frac{1}{\sum^{d-1}_{i=1}\frac{\tau_{i}}{2}x_{i}^{2}}=C^{\ast}+O(\varepsilon^{\frac{d-1}{24}}),
\end{align}
where the constant $C^{\ast}$ depends on $d,R,\tau_{1},\tau_{2}$, but not on $\varepsilon$. Observe that
\begin{align*}
&\int_{\varepsilon^{\frac{1}{24}}<|x'|<R}\frac{1}{\sum^{d-1}_{i=1}\frac{\tau_{i}}{2}x_{i}^{2}}+\int_{|x'|<\varepsilon^{\frac{1}{24}}}\frac{1}{\varepsilon+\sum^{d-1}_{i=1}\frac{\tau_{i}}{2}x_{i}^{2}}\notag\\
&=\int_{|x'|<R}\frac{1}{\varepsilon+\sum^{d-1}_{i=1}\frac{\tau_{i}}{2}x_{i}^{2}}+\int_{\varepsilon^{\frac{1}{24}}<|x'|<R}\frac{\varepsilon}{\sum^{d-1}_{i=1}\frac{\tau_{i}}{2}x_{i}^{2}(\varepsilon+\sum^{d-1}_{i=1}\frac{\tau_{i}}{2}x_{i}^{2})}\notag\\
&=\int_{|x'|<R}\frac{1}{\varepsilon+\sum^{d-1}_{i=1}\frac{\tau_{i}}{2}x_{i}^{2}}+O(1)\varepsilon^{\frac{d+19}{24}}.
\end{align*}
This, in combination with \eqref{LATCZ001001}--\eqref{LATCZ002001}, leads to that
\begin{align}\label{GARN001001}
&\int_{\varepsilon^{\frac{1}{24}}<|x'|<R}\frac{dx'}{h_{1}(x')-h(x')}+\int_{|x'|<\varepsilon^{\frac{1}{24}}}\frac{dx'}{\varepsilon+h_{1}(x')-h(x')}\notag\\
&=\int_{|x'|<R}\frac{1}{\varepsilon+\sum^{d-1}_{i=1}\frac{\tau_{i}}{2}x_{i}^{2}}+C^{\ast}+O(\varepsilon^{\frac{d-1}{24}}).
\end{align}
Then, for $d=2$, we have
\begin{align}\label{GARN002001}
\int_{|x_{1}|<R}\frac{1}{\varepsilon+\frac{\tau_{1}}{2}x_{1}^{2}}=&\int_{-\infty}^{+\infty}\frac{1}{\varepsilon+\frac{\tau_{1}}{2}x_{1}^{2}}-\int_{|x_{1}|>R}\frac{1}{\frac{\tau_{1}}{2}x_{1}^{2}}+\int_{|x_{1}|>R}\frac{\varepsilon}{\frac{\tau_{1}}{2}x_{1}^{2}(\varepsilon+\frac{\tau_{1}}{2}x_{1}^{2})}\notag\\
=&\frac{\sqrt{2}\pi}{\sqrt{\tau_{1}}}\frac{1}{\sqrt{\varepsilon}}-\frac{4}{\tau_{1}R}+O(1)\varepsilon,
\end{align}
while, for $d=3$,
\begin{align}\label{GARN003001}
\int_{|x'|<R}\frac{1}{\varepsilon+\sum^{2}_{i=1}\frac{\tau_{i}}{2}x_{i}^{2}}=&\frac{8}{\sqrt{\tau_{1}\tau_{2}}}\int^{\frac{\pi}{2}}_{0}\int_{0}^{R(\theta)}\frac{t}{\varepsilon+t^{2}}\,dtd\theta\notag\\
=&\frac{4}{\sqrt{\tau_{1}\tau_{2}}}\int^{\frac{\pi}{2}}_{0}\bigg(|\ln\varepsilon|+\ln(R(\theta)^{2})+\ln\Big(1+\frac{\varepsilon}{R(\theta)^{2}}\Big)\bigg)\,d\theta\notag\\
=&\frac{2\pi}{\sqrt{\tau_{1}\tau_{2}}}|\ln\varepsilon|+\frac{8}{\sqrt{\tau_{1}\tau_{2}}}\int^{\frac{\pi}{2}}_{0}\ln R(\theta)d\theta+O(\varepsilon),
\end{align}
where $R(\theta):=\frac{R}{\sqrt{2}}(\tau_{1}^{-1}\cos^{2}\theta+\tau_{2}^{-1}\sin^{2}\theta)^{-1/2}$ and in the last line we used the Taylor expansion of $\ln(1+x)$ for $|x|<1$. Consequently, in view of \eqref{LRTA06}, it follows from \eqref{GARN001001}--\eqref{GARN003001} that for $\alpha=1,2,...,d$,
\begin{align}\label{DHMZ001}
a_{\alpha\alpha}=&
\begin{cases}
\frac{\sqrt{2}\pi\mathcal{L}_{2}^{\alpha}}{\sqrt{\tau_{1}}}\frac{1}{\sqrt{\varepsilon}}+\mathcal{K}_{2}^{\ast\alpha}+O(1)\varepsilon^{\frac{1}{24}},&d=2,\\
\frac{2\pi\mathcal{L}_{3}^{\alpha}}{\sqrt{\tau_{1}\tau_{2}}}|\ln\varepsilon|+\mathcal{K}_{3}^{\ast\alpha}+O(1)\varepsilon^{\frac{1}{12}},&d=3,
\end{cases}
\end{align}
where
\begin{align}\label{MGAT001}
\mathcal{K}_{d}^{\ast\alpha}=&
\begin{cases}
\mathcal{L}_{2}^{\alpha}\left(C^{\ast}-\frac{4}{\tau_{1}R}\right)+\mathcal{M}_{2}^{\ast\alpha},&d=2,\\
\mathcal{L}_{3}^{\alpha}\left(C^{\ast}+\frac{8}{\sqrt{\tau_{1}\tau_{2}}}\int^{\frac{\pi}{2}}_{0}\ln R(\theta)d\theta\right)+\mathcal{M}_{3}^{\ast\alpha},&d=3.
\end{cases}
\end{align}

We now demonstrate that the geometry constants $\mathcal{K}_{d}^{\ast\alpha}$, $d=2,3$ are independent of the length parameter $R$ of the thin gap $\Omega_{R}$. Otherwise, assume that there exist $\varepsilon$-independent constants $\mathcal{K}_{d}^{\ast\alpha}(R_{1})$ and $\mathcal{K}_{d}^{\ast\alpha}(R_{2})$, $R_{1}\neq R_{2}$, such that \eqref{DHMZ001} holds. Then
\begin{align*}
\mathcal{K}_{d}^{\ast\alpha}(R_{1})-\mathcal{K}_{d}^{\ast\alpha}(R_{2})=O(1)\varepsilon^{\frac{d-1}{24}},
\end{align*}
which yields that $\mathcal{K}_{d}^{\ast\alpha}(R_{1})=\mathcal{K}_{d}^{\ast\alpha}(R_{2})$.

\end{proof}

\begin{proof}[The proof of Theorem \ref{THMM}.]
Denote
\begin{align}\label{IMT001}
\mathcal{G}_{d}^{\ast\alpha}=&
\begin{cases}
\frac{\sqrt{\tau_{1}}\mathcal{K}_{2}^{\ast\alpha}}{\sqrt{2}\pi\mathcal{L}_{2}^{\alpha}},&d=2,\\
\frac{\sqrt{\tau_{1}\tau_{2}}\mathcal{K}_{3}^{\ast\alpha}}{2\pi\mathcal{L}_{3}^{\alpha}},&d=3,
\end{cases}\quad\quad\alpha=1,2,...,d.
\end{align}
For $\alpha=1,2,...,d$, it follows from Lemma \ref{PLNDZ} that
\begin{align}\label{ALGK001}
\frac{1}{a_{\alpha\alpha}}=&
\begin{cases}
\frac{\sqrt{\varepsilon}}{\frac{\sqrt{2}\pi\mathcal{L}_{2}^{\alpha}}{\sqrt{\tau_{1}}}}\frac{1}{1-\frac{\frac{\sqrt{2}\pi\mathcal{L}_{2}^{\alpha}}{\sqrt{\tau_{1}}}-\sqrt{\varepsilon}a_{\alpha\alpha}}{\frac{\sqrt{2}\pi\mathcal{L}_{2}^{\alpha}}{\sqrt{\tau_{1}}}}}=\frac{\sqrt{\tau_{1}}}{\sqrt{2}\pi\mathcal{L}_{2}^{\alpha}}\frac{\sqrt{\varepsilon}(1+O(\varepsilon^{\frac{13}{24}}))}{1+\mathcal{G}_{2}^{\ast\alpha}\sqrt{\varepsilon}},&d=2,\\
\frac{|\ln\varepsilon|^{-1}}{\frac{2\pi\mathcal{L}_{3}^{\alpha}}{\sqrt{\tau_{1}\tau_{2}}}}\frac{1}{1-\frac{\frac{2\pi\mathcal{L}_{3}^{\alpha}}{\sqrt{\tau_{1}\tau_{2}}}-|\ln\varepsilon|^{-1}a_{\alpha\alpha}}{\frac{2\pi\mathcal{L}_{3}^{\alpha}}{\sqrt{\tau_{1}\tau_{2}}}}}=\frac{\sqrt{\tau_{1}\tau_{2}}}{2\pi\mathcal{L}_{3}^{\alpha}}\frac{1+O(\varepsilon^{\frac{1}{12}}|\ln\varepsilon|^{-1})}{|\ln\varepsilon|+\mathcal{G}_{3}^{\ast\alpha}},&d=3.
\end{cases}
\end{align}
In light of \eqref{LMC}--\eqref{LVZQ0011}, \eqref{PAK001}--\eqref{PAK003} and \eqref{LGAJ001}--\eqref{LGAJ002}, we deduce that if $d=2,3$,
\begin{align}
\frac{\det\mathbb{F}_{1}^{\alpha}[\varphi]}{\det\mathbb{D}}=&\frac{\det\mathbb{F}_{1}^{\ast\alpha}[\varphi]}{\det\mathbb{D}^{\ast}}\frac{1}{1-\frac{\det\mathbb{D}^{\ast}-\det\mathbb{D}}{\det\mathbb{D}^{\ast}}}+\frac{\det\mathbb{F}_{1}^{\alpha}[\varphi]-\det\mathbb{F}_{1}^{\ast\alpha}[\varphi]}{\det\mathbb{D}}\notag\\
=&\frac{\det\mathbb{F}_{1}^{\ast\alpha}[\varphi]}{\det\mathbb{D}^{\ast}}(1+O(\varepsilon^{\frac{d-1}{24}})),\quad \alpha=1,2,...,d,\label{DRT001}\\
\frac{\det\mathbb{F}_{2}^{\alpha}[\varphi]}{\det\mathbb{D}}=&\frac{\det\mathbb{F}_{2}^{\ast\alpha}[\varphi]}{\det\mathbb{D}^{\ast}}\frac{1}{1-\frac{\det\mathbb{D}^{\ast}-\det\mathbb{D}}{\det\mathbb{D}^{\ast}}}+\frac{\det\mathbb{F}_{2}^{\alpha}[\varphi]-\det\mathbb{F}_{2}^{\ast\alpha}[\varphi]}{\det\mathbb{D}}\notag\\
=&\frac{\det\mathbb{F}_{2}^{\ast\alpha}[\varphi]}{\det\mathbb{D}^{\ast}}(1+O(\varepsilon^{\frac{d-1}{24}})),\quad\alpha=d+1,...,\frac{d(d+1)}{2},\label{DRT003}
\end{align}
and, if $d\geq4$,
\begin{align}\label{DRT002}
\frac{\det\mathbb{F}_{3}^{\alpha}[\varphi]}{\det\mathbb{F}}=&\frac{\det\mathbb{F}_{3}^{\ast\alpha}[\varphi]}{\det\mathbb{F}^{\ast}}\frac{1}{1-\frac{\det\mathbb{F}^{\ast}-\det\mathbb{F}}{\det\mathbb{F}^{\ast}}}+\frac{\det\mathbb{F}_{3}^{\alpha}[\varphi]-\det\mathbb{F}_{3}^{\ast\alpha}[\varphi]}{\det\mathbb{F}}\notag\\
=&\frac{\det\mathbb{F}_{3}^{\ast\alpha}[\varphi]}{\det\mathbb{F}^{\ast}}(1+O(\varepsilon^{\min\{\frac{1}{6},\frac{d-3}{24}\}})),\quad \alpha=1,2,...,\frac{d(d+1)}{2},
\end{align}
where the blow-up factor matrices $\mathbb{D}$, $\mathbb{F}_{1}^{\alpha}[\varphi],\,\alpha=1,2,...,d$, $\mathbb{F}_{2}^{\alpha}[\varphi],\,\alpha=d+1,...,\frac{d(d+1)}{2}$, $\mathbb{F}_{3}^{\alpha}[\varphi],\,\alpha=1,2,...,\frac{d(d+1)}{2}$, are defined by \eqref{ALHNCZ001}, \eqref{QKT001}--\eqref{QKT002} and \eqref{PGNC}, respectively.

Let
\begin{align*}
\rho_{d}(\varepsilon):=
\begin{cases}
\sqrt{\varepsilon}|\ln\varepsilon|^{-2},&d=2,\\
|\ln\varepsilon|^{-1},&d=3.
\end{cases}
\end{align*}
Then combining \eqref{ALGK001}--\eqref{DRT002}, it follows from Cramer's rule that

$(\rm{i})$ if $d=2,3$, for $\alpha=1,2,...,d$,
\begin{align}\label{LGNA001001}
C^{\alpha}=&
\frac{\prod\limits_{i\neq\alpha}^{d}a_{ii}\det\mathbb{F}_{1}^{\alpha}[\varphi]}{\prod\limits_{i=1}^{d}a_{ii}\det\mathbb{D}}(1+O(\rho_{d}(\varepsilon)))\notag\\
=&
\begin{cases}
\frac{\det\mathbb{F}_{1}^{\ast\alpha}[\varphi]}{\det\mathbb{D}^{\ast}}\frac{\sqrt{\tau_{1}}\sqrt{\varepsilon}}{\sqrt{2}\pi\mathcal{L}_{2}^{\alpha}}\frac{1+O(\varepsilon^{\frac{1}{24}})}{1+\mathcal{G}_{2}^{\ast\alpha}\sqrt{\varepsilon}},&d=2,\\
\frac{\det\mathbb{F}_{1}^{\ast\alpha}[\varphi]}{\det\mathbb{D}^{\ast}}\frac{\sqrt{\tau_{1}\tau_{2}}}{2\pi\mathcal{L}_{3}^{\alpha}}\frac{1+O(|\ln\varepsilon|^{-1})}{|\ln\varepsilon|+\mathcal{G}_{3}^{\ast\alpha}},&d=3,
\end{cases}
\end{align}
and, for $\alpha=d+1,...,\frac{d(d+1)}{2}$,
\begin{align}\label{LGNA002001}
C^{\alpha}=&
\frac{\det\mathbb{F}_{2}^{\alpha}[\varphi]}{\det\mathbb{D}}=\frac{\det\mathbb{F}_{2}^{\ast\alpha}[\varphi]}{\det\mathbb{D}^{\ast}}(1+O(\varepsilon^{\frac{d-1}{24}}));
\end{align}

$(\rm{ii})$ if $d\geq4$, for $\alpha=1,2,...,\frac{d(d+1)}{2}$,
\begin{align}\label{HUTO}
C^{\alpha}=\frac{\det\mathbb{F}_{3}^{\alpha}[\varphi]}{\det\mathbb{F}}=\frac{\det\mathbb{F}_{3}^{\ast\alpha}[\varphi]}{\det\mathbb{F}^{\ast}}(1+O(\varepsilon^{\min\{\frac{1}{6},\frac{d-3}{24}\}})).
\end{align}

Consequently, substituting \eqref{ATCG001} and \eqref{LGNA001001}--\eqref{HUTO} into \eqref{Le2.015}, we obtain that Theorem \ref{THMM} holds.

\end{proof}

\section{Appendix:\,The proof of Theorem \ref{thm8698}}\label{SL005}
In the following, the iterate technique developed in \cite{LLBY2014} will be used to prove Theorem \ref{thm8698}. For the sake of discussion and presentation, we choose $\psi=0$ on $\partial D_{1}$ in \eqref{P2.008}. To begin with, we decompose the solution of \eqref{P2.008} as follows:
$$v=\sum^{d}_{j=1}v_{j},\quad v_{j}=(v_{j}^{1},v_{j}^{2},...,v_{j}^{d})^{T},$$
where $v_{j}$, $j=1,2,...,\frac{d(d+1)}{2}$, satisfy that $v_{j}^{i}=0$, $j\neq i$, and $v_{j}$ is a solution to the following problem
\begin{align}\label{P2.010}
\begin{cases}
  \mathcal{L}_{\lambda,\mu}v_{j}:=\nabla\cdot(\mathbb{C}^0e(v_{j}))=0,\quad&
\hbox{in}\  \Omega,  \\
v_{j}=0,\ &\hbox{on}\ \partial{D}_{1},\\
v_{j}=(0,...,0,\phi^{j}, 0,...,0)^{T},&\hbox{on} \ \partial{D}.
\end{cases}
\end{align}
Then
\begin{align*}
\nabla v=\sum^{d}_{j=1}\nabla v_{j}.
\end{align*}

For $x\in\Omega_{2R}$, denote
\begin{align*}
\tilde{v}_{j}:=&\phi^{j}(x',h(x'))(1-\bar{v})e_{j}-\frac{\lambda+\mu}{\lambda+2\mu}f(\bar{v})\phi^{j}(x',h(x'))\partial_{x_{j}}\delta\,e_{d},\quad j=1,...,d-1,
\end{align*}
and
\begin{align*}
\tilde{v}_{d}:=&\phi^{d}(x',h(x'))(1-\bar{v})e_{d}-\frac{\lambda+\mu}{\mu}f(\bar{v})\phi^{d}(x',h(x'))\sum^{d-1}_{i=1}\partial_{x_{i}}\delta\,e_{i}.
\end{align*}
Then
\begin{align*}
\tilde{v}=\sum^{d}_{j=1}\tilde{v}_{j},
\end{align*}
where $\tilde{v}$ is defined by \eqref{AHM001} with $\psi=0$ on $\partial D_{1}$. It follows from a direct computation that for $j=1,2,...,d$, $x\in\Omega_{2R}$,
\begin{align}
|\mathcal{L}_{\lambda,\mu}\tilde{v}_{j}|\leq&\frac{C|\phi^{j}(x',h(x'))|}{\delta^{2/m}}+\frac{C|\nabla_{x'}\phi^{j}(x',h(x'))|}{\delta}+C\|\phi^{j}\|_{C^{2}(\partial D)}.\label{QQ.103}
\end{align}

For $x\in\Omega_{2R}$, write
\begin{equation*}
w_j:=v_j-\tilde{v}_j,\quad j=1,2,...,d.
\end{equation*}
Therefore, $w_{j}$ satisfies
\begin{align}\label{LHM001}
\begin{cases}
\mathcal{L}_{\lambda,\mu}w_{j}=-\mathcal{L}_{\lambda,\mu}\tilde{v}_{j},&
\hbox{in}\  \Omega_{2R},  \\
w_{j}=0, \quad&\hbox{on} \ \Gamma^{\pm}_{2R}.
\end{cases}
\end{align}

We next divide into two subparts to prove Theorem \ref{thm8698}. For the convenience of presentation, in the following we utilize $\|\phi^{j}\|_{C^{1}}$ and $\|\phi^{j}\|_{C^{2}}$ to denote $\|\phi^{j}\|_{C^{1}(\partial D)}$ and $\|\phi^{j}\|_{C^{2}(\partial D)}$, respectively.

\noindent{\bf Part 1.}
Let $v_j\in H^1(\Omega;\mathbb{R}^{d})$ be the weak solution of \eqref{P2.010}, $j=1,2,...,d$. Then
\begin{align}\label{QTNY001}
\int_{\Omega_{2R}}|\nabla w_j|^2dx\leq C\|\phi^{j}\|_{C^{2}}^{2},\quad j=1,2,...,d.
\end{align}

Firstly, we extend $\phi\in C^{2}(\partial D;\mathbb{R}^{d})$ to $\phi\in C^{2}(\overline{\Omega};\mathbb{R}^{d})$ satisfying that for $j=1,2,...,d$, $\|\phi^{j}\|_{C^{2}(\overline{\Omega\setminus\Omega_{R}})}\leq C\|\phi^{j}\|_{C^{2}(\partial D)}$. Pick a smooth cutoff function $\rho\in C^{2}(\overline{\Omega})$ such that $0\leq\rho\leq1$, $|\nabla\rho|\leq C$ in $\overline{\Omega}$, and
\begin{align}\label{Le2.021}
\rho=1\;\,\mathrm{in}\;\,\Omega_{\frac{3}{2}R},\quad\rho=0\;\,\mathrm{in}\;\,\overline{\Omega}\setminus\Omega_{2R}.
\end{align}
For $x\in\Omega$, denote
\begin{align*}
\hat{v}_{j}(x)=\left(0,...,0,[\rho(x)\phi^{j}(x',h(x'))+(1-\rho(x))\phi^{j}(x)](1-\bar{v}(x)),0,...,0\right)^{T}.
\end{align*}
Especially,
\begin{align*}
\hat{v}_{j}(x)=(0,...,0,\phi^{j}(x',h(x'))(1-\bar{v}(x)),0,...,0)^{T},\quad\;\,\mathrm{in}\;\,\Omega_{R}.
\end{align*}
From \eqref{BATL001} and \eqref{Le2.021}, we obtain
\begin{align}\label{JAT001}
\|\hat{v}_{j}\|_{C^{2}(\Omega\setminus\Omega_{R})}\leq C\|\phi^{j}\|_{C^{2}},\quad j=1,2,...,d.
\end{align}
For $j=1,2,...,d$, write $\hat{w}_{j}:=v_{j}-\hat{v}_{j}$ in $\Omega$. Consequently, $\hat{w}_{j}$ satisfies
\begin{align}\label{ESGH01}
\begin{cases}
\mathcal{L}_{\lambda,\mu}\hat{w}_{j}=-\mathcal{L}_{\lambda,\mu}\hat{v}_{j},&
\hbox{in}\  \Omega,  \\
\hat{w}_{j}=0, \quad&\hbox{on} \ \partial\Omega.
\end{cases}
\end{align}
By a direct calculation, we deduce that for $i=1,...,d-1$, $x\in\Omega_{2R}$,
\begin{align}
|\partial_{x_{i}}\hat{v}_{j}|\leq&C|\phi^{j}(x',h(x'))|\delta^{-1/m}+\|\phi^{j}\|_{C^{1}},\label{QQ.10101}\\
|\partial_{x_{d}}\hat{v}_{j}|=&|\phi^{j}(x',h(x'))|\delta^{-1},\quad\partial_{x_{d}x_{d}}\hat{v}_{j}=0.\label{QQ.101}
\end{align}
Multiplying equation (\ref{ESGH01}) by $\hat{w}_{j}$ and integrating by parts, we have
\begin{align}\label{AK001}
\int_{\Omega}\left(\mathbb{C}^0e(\hat{w}_{j}),e(\hat{w}_{j})\right)dx=\int_{\Omega}\hat{w}_{j}\left(\mathcal{L}_{\lambda,\mu}\hat{v}_{j}\right)dx.
\end{align}
From the Poincar\'{e} inequality, we have
\begin{align}\label{AK002}
\|\hat{w}_{j}\|_{L^2(\Omega\setminus\Omega_R)}\leq\,C\|\nabla \hat{w}_{j}\|_{L^2(\Omega\setminus\Omega_R)}.
\end{align}
The first Korn's inequality, in combination with \eqref{Le2.012} and \eqref{AK001}, gives that
\begin{align}\label{KWQP001}
\int_{\Omega}|\nabla\hat{w}_{j}|^2dx\leq &\,2\int_{\Omega}|e(\hat{w}_{j})|^2dx\nonumber\\
\leq&\,C\left|\int_{\Omega_R}\hat{w}_{j}(\mathcal{L}_{\lambda,\mu}\hat{v}_{j})dx\right|+C\left|\int_{\Omega\setminus\Omega_R}\hat{w}_{j}(\mathcal{L}_{\lambda,\mu}\hat{v}_{j})dx\right|.
\end{align}

On one hand, from \eqref{JAT001} and \eqref{AK002} we deduce that
\begin{align}\label{KWQP002}
\left|\int_{\Omega\setminus\Omega_R}\hat{w}_{j}(\mathcal{L}_{\lambda,\mu}\hat{v}_{j})dx\right|\leq&\|\phi^{j}\|_{C^2}\int_{\Omega\setminus\Omega_R}|\hat{w}_{j}|dx\leq C\|\phi^{j}\|_{C^2}\|\nabla \hat{w}_{j}\|_{L^{2}(\Omega\setminus\Omega_{R})}.
\end{align}

On the other hand, observe that the Sobolev trace embedding theorem allows us to write
\begin{align}\label{trace}
\int\limits_{\scriptstyle |x'|={R},\atop\scriptstyle
h(x')<x_{d}<{\varepsilon}+h_1(x')\hfill}|\hat{w}_{j}|\,dx\leq\ C \left(\int_{\Omega\setminus\Omega_{R}}|\nabla \hat{w}_{j}|^2dx\right)^{\frac{1}{2}},
\end{align}
where $C$ is a positive constant independent of $\varepsilon$. In view of (\ref{QQ.10101}), we deduce
\begin{align}\label{JANC}
\int_{\Omega_{R}}|\nabla_{x'}\hat{v}_{j}|^2dx\leq& C\int_{|x'|<R}\delta\left(\frac{|\phi^{j}(x',h(x'))|^{2}}{\delta^{2/m}}+\|\phi^{j}\|_{C^{1}}^{2}\right)dx'\leq C\|\phi^{j}\|_{C^{1}}^{2}.
\end{align}
Then combining \eqref{QQ.101} and \eqref{trace}--\eqref{JANC}, we obtain
\begin{align*}
&\left|\int_{\Omega_R}\hat{w}_{j}(\mathcal{L}_{\lambda,\mu}\hat{v}_{j})dx\right|
\leq\,C\sum_{k+l<2d}\left|\int_{\Omega_{R}}\hat{w}_{j}\partial_{x_{k}x_{l}}\hat{v}_{j}dx\right|\nonumber\\
\leq&C\int_{\Omega_{R}}|\nabla \hat{w}_{j}\|\nabla_{x'}\hat{v}_{j}|dx+\int\limits_{\scriptstyle |x'|={R},\atop\scriptstyle
h(x')<x_{d}<\varepsilon+h_1(x')\hfill}C|\nabla_{x'}\hat{v}_{i}\|\hat{w}_{j}|dx\nonumber \\
\leq&C\|\nabla \hat{w}_{j}\|_{L^{2}(\Omega_{R})}\|\nabla_{x'}\hat{v}_{j}\|_{L^{2}(\Omega_{R})}+C\|\phi^{j}\|_{C^1}\|\nabla \hat{w}_{j}\|_{L^{2}(\Omega\setminus\Omega_{R})}\nonumber\\
\leq&C\|\phi^{j}\|_{C^{1}}\|\nabla \hat{w}_{j}\|_{L^{2}(\Omega)}.
\end{align*}
This, together with \eqref{KWQP001}--\eqref{KWQP002}, yields that
\begin{align*}
\|\nabla \hat{w}_{j}\|_{L^{2}(\Omega)}\leq  C\|\phi^{j}\|_{C^{2}}.
\end{align*}
Due to the fact that
\begin{align*}
w_{j}=\hat{w}_{j}+
\begin{cases}
\frac{\lambda+\mu}{\lambda+2\mu}f(\bar{v})\phi^{j}(x',h(x'))\partial_{x_{j}}\delta\,e_{d},&j=1,...,d-1,\\
\frac{\lambda+\mu}{\mu}f(\bar{v})\psi^{d}(x',h(x'))\sum\limits^{d-1}_{i=1}\partial_{x_{i}}\delta\,e_{i},&j=d,
\end{cases}\quad \mathrm{in}\;\Omega_{2R},
\end{align*}
it follows that \eqref{QTNY001} holds.

\noindent{\bf Part 2.}
Proof of
\begin{align}\label{step2}
 \int_{\Omega_\delta(z')}|\nabla w_{j}|^2dx\leq& C\delta^{d}(|\phi^{j}(z',h(z'))|^{2}\delta^{2-4/m}+|\nabla_{x'}\phi^{j}(z',h(z'))|^{2})\notag\\
&+C\delta^{d+2}\|\phi^{j}\|_{C^{2}(\partial D)}^{2},\quad j=1,2,...,d.
\end{align}
For $0<t<s<R$ and $|z'|\leq R$, let $\eta\in C^{2}(\Omega_{2R})$ be a smooth cutoff function satisfying that $\eta(x')=1$ if $|x'-z'|<t$, $\eta(x')=0$ if $|x'-z'|>s$, $0\leq\eta(x')\leq1$ if $t\leq|x'-z'|\leq s$, and $|\nabla_{x'}\eta(x')|\leq\frac{2}{s-t}$. Multiplying equation \eqref{LHM001} by $w_{j}\eta^{2}$, it follows from integration by parts that
\begin{align}\label{LFN001}
\int_{\Omega_{s}(z')}\left(\mathbb{C}^0e(w_{j}),e(w_{j}\eta^{2})\right)dx=\int_{\Omega_{s}(z')}w_{j}\eta^{2}\left(\mathcal{L}_{\lambda,\mu}\tilde{v}_{j}\right)dx.
\end{align}
On one hand, from \eqref{ACDT001}, \eqref{Le2.012} and the first Korn's inequality, we deduce
\begin{align}\label{MAH0199}
\int_{\Omega_{s}(z')}\left(\mathbb{C}^0e(w_{j}),e(w_{j}\eta^{2})\right)dx\geq\frac{1}{C}\int_{\Omega_{s}(z')}|\eta\nabla w_{j}|^{2}-C\int_{\Omega_{s}(z')}|\nabla\eta|^{2}|w_{j}|^{2}.
\end{align}
On the other hand, applying H\"{o}lder inequality and Cauchy inequality to the right hand side of \eqref{LFN001}, we deduce
\begin{align*}
\left|\int_{\Omega_{s}(z')}w_{j}\eta^{2}\left(\mathcal{L}_{\lambda,\mu}\tilde{v}_{j}\right)dx\right|\leq \frac{C}{(s-t)^{2}}\int_{\Omega_{s}(z')}|w_{j}|^{2}dx+C(s-t)^{2}\int_{\Omega_{s}(z')}|\mathcal{L}_{\lambda,\mu}\tilde{v}_{j}|^{2}dx,
\end{align*}
which, in combination with \eqref{LFN001}--\eqref{MAH0199}, gives the following iteration formula:
\begin{align*}
\int_{\Omega_{t}(z')}|\nabla w_{j}|^{2}dx\leq\frac{C}{(s-t)^{2}}\int_{\Omega_{s}(z')}|w_{j}|^{2}dx+C(s-t)^{2}\int_{\Omega_{s}(z')}|\mathcal{L}_{\lambda,\mu}\tilde{v}_{j}|^{2}dx.
\end{align*}

For $|z'|\leq R$, $\delta<s\leq\vartheta(\kappa_{1},\kappa_{3})\delta^{1/m}$, $\vartheta(\kappa_{1},\kappa_{3})=\frac{1}{2^{m+1}\kappa_{3}\max\{1,\kappa_{1}^{1/m-1}\}}$, using conditions ({\bf{H1}}) and ({\bf{H2}}), we obtain that for $(x',x_{d})\in\Omega_{s}(z')$,
\begin{align}\label{KHW01}
|\delta(x')-\delta(z')|\leq&|h_{1}(x')-h_{1}(z')|+|h(x')-h(z')|\notag\\
\leq&(|\nabla_{x'}h_{1}(x'_{\theta_{1}})|+|\nabla_{x'}h(x'_{\theta_{2}})|)|x'-z'|\notag\\
\leq&\kappa_{3}|x'-z'|(|x'_{\theta_{1}}|^{m-1}+|x'_{\theta_{2}}|^{m-1})\notag\\
\leq&2^{m-1}\kappa_{3}s(s^{m-1}+|z'|^{m-1})\notag\\
\leq&\frac{\delta(z')}{2},
\end{align}
which implies that
\begin{align}\label{QWN001}
\frac{1}{2}\delta(z')\leq\delta(x')\leq\frac{3}{2}\delta(z'),\quad\mathrm{in}\;\Omega_{s}(z').
\end{align}
In view of the fact that $w_{j}=0$ on $\Gamma^{-}_{R}$, it follows from \eqref{QWN001} that
\begin{align}\label{AQ3.038}
\int_{\Omega_{s}(z')}|w_{j}|^{2}\leq C\delta^{2}\int_{\Omega_{s}(z')}|\nabla w_{j}|^{2}.
\end{align}
From (\ref{QQ.103}) and \eqref{QWN001}, we have
\begin{align}\label{AQ3.037}
\int_{\Omega_{s}(z')}|\mathcal{L}_{\lambda,\mu}\tilde{v}_{j}|^{2}\leq& C\delta^{-1}s^{d-1}(|\phi^{j}(z',h(z'))|^{2}\delta^{2-4/m}+|\nabla_{x'}\phi^{j}(z',h(z'))|^{2})\notag\\
&+C\delta^{-1}s^{d+1}\|\phi^{j}\|_{C^{2}}^{2}.
\end{align}

Write
$$F(t):=\int_{\Omega_{t}(z')}|\nabla w_{j}|^{2}.$$
Then combining \eqref{AQ3.038} and \eqref{AQ3.037}, we deduce that
\begin{align*}
F(t)\leq& \left(\frac{c\delta}{s-t}\right)^2F(s)+C(s-t)^2s^{d+1}\delta^{-1}\|\phi^{j}\|_{C^{2}(\partial D)}^{2}\notag\\
&+C(s-t)^2\delta^{-1}s^{d-1}(|\phi^{j}(z',h(z'))|^{2}\delta^{2-4/m}+|\nabla_{x'}\phi^{j}(z',h(z'))|^{2}),
\end{align*}
which, together with $s=t_{i+1}$, $t=t_{i}$, $t_{i}=\delta+2ci\delta,\;i=0,1,2,...,\left[\frac{\vartheta(\kappa_{1},\kappa_{3})}{4c\delta^{1-1/m}}\right]+1$, leads to that
\begin{align}\label{ALZ001}
F(t_{i})\leq&\frac{1}{4}F(t_{i+1})+C(i+1)^{d+1}\delta^{d+2}\|\phi^{j}\|_{C^{2}}^{2}\notag\\
&+C(i+1)^{d-1}\delta^{d}(|\phi^{j}(z',h(z'))|^{2}\delta^{2-4/m}+|\nabla_{x'}(\phi^{j}(z',h(z')))|^{2}).
\end{align}
Hence, applying $\left[\frac{\vartheta(\kappa_{1},\kappa_{3})}{4c\delta^{1-1/m}}\right]+1$ iterations to \eqref{ALZ001}, it follows from \eqref{QTNY001} that for a sufficiently small $\varepsilon>0$,
\begin{align*}
F(t_{0})\leq C\delta^{d}(|\phi^{j}(z',h(z'))|^{2}\delta^{2-4/m}+|\nabla_{x'}\phi^{j}(z',h(z'))|^{2}+\delta^{2}\|\phi^{j}\|_{C^{2}}^{2}).
\end{align*}

\noindent{\bf Part 3.}
Claim that for $j=1,2,...,d$, $z\in\Omega_{R}$,
\begin{align}\label{AQ3.052}
|\nabla w_{j}(z)|\leq&C(|\phi^{j}(z',h(z'))|\delta^{1-2/m}+|\nabla_{x'}(\phi^{j}(z',h(z')))|)+C\delta\|\phi^{j}\|_{C^{2}}.
\end{align}

Making use of a change of variables in the thin gap $\Omega_{\delta}(z')$ as follows:
\begin{align*}
\begin{cases}
x'-z'=\delta y',\\
x_{d}=\delta y_{d},
\end{cases}
\end{align*}
we rescales $\Omega_{\delta}(z')$ into $Q_{1}$, where, for $0<r\leq 1$,
\begin{align*}
Q_{r}=\left\{y\in\mathbb{R}^{d}\,\Big|\,\frac{1}{\delta}h(\delta y'+z')<y_{d}<\frac{\varepsilon}{\delta}+\frac{1}{\delta}h_{1}(\delta y'+z'),\;|y'|<r\right\}.
\end{align*}
Denote the top and bottom boundaries of $Q_{r}$, respectively, by
\begin{align*}
\widehat{\Gamma}^{+}_{r}=&\left\{y\in\mathbb{R}^{d}\,\Big|\,y_{d}=\frac{\varepsilon}{\delta}+\frac{1}{\delta}h_{1}(\delta y'+z'),\;|y'|<r\right\},
\end{align*}
and
\begin{align*}
\widehat{\Gamma}^{-}_{r}=&\left\{y\in\mathbb{R}^{d}\,\Big|\,y_{d}=\frac{1}{\delta}h(\delta y'+z'),\;|y'|<r\right\}.
\end{align*}
Analogously as in \eqref{KHW01}, we obtain that for $x\in\Omega_{\delta}(z')$,
\begin{align*}
|\delta(x')-\delta(z')|
\leq&2^{m-1}\kappa_{3}\delta(\delta^{m-1}+|z'|^{m-1})\notag\\
\leq&2^{m}\kappa_{3}\max\{1,\kappa_{1}^{1/m-1}\}\delta^{2-1/m},
\end{align*}
which yields that
\begin{align*}
\left|\frac{\delta(x')}{\delta(z')}-1\right|\leq2^{m}\max\{1,\kappa_{1}^{1/m-1}\}\kappa_{2}^{1-1/m}\kappa_{3}R^{m-1}.
\end{align*}
This, in combination with the fact that $R$ is a small positive constant, leads to that $Q_{1}$ is of nearly unit size as far as applications of Sobolev embedding theorems and classical $L^{p}$ estimates for elliptic systems are concerned.

Let
\begin{align*}
W_{j}(y',y_d):=w_{j}(\delta y'+z',\delta y_d),\quad \widetilde{V}_{j}(y',y_d):=\tilde{v}_{j}(\delta y'+z',\delta y_d).
\end{align*}
Therefore, $W_{j}(y)$ verifies
\begin{align*}
\begin{cases}
\mathcal{L}_{\lambda,\mu}W_{j}=-\mathcal{L}_{\lambda,\mu}\widetilde{V}_{j},&
\hbox{in}\  Q_{1},  \\
W_{j}=0, \quad&\hbox{on} \ \widehat{\Gamma}^{\pm}_{1}.
\end{cases}
\end{align*}
Since $W_{j}=0$ on $\widehat{\Gamma}^{\pm}_{1}$, then we see from Poincar\'{e} inequality that
$$\|W_{j}\|_{H^1(Q_1)}\leq C\|\nabla W_{j}\|_{L^2(Q_1)}.$$
This, together with the Sobolev embedding theorem and classical $W^{2, p}$ estimates for elliptic systems, yields that for some $p>d$,
\begin{align*}
\|\nabla W_{j}\|_{L^{\infty}(Q_{1/2})}\leq C\|W_{j}\|_{W^{2,p}(Q_{1/2})}\leq C(\|\nabla W_{j}\|_{L^{2}(Q_{1})}+\|\mathcal{L}_{\lambda,\mu}\widetilde{V}_{j}\|_{L^{\infty}(Q_{1})}).
\end{align*}
Consequently, rescaling back to $w_{j}$ and $\tilde{v}_{j}$, we have
\begin{align}\label{ADQ601}
\|\nabla w_{j}\|_{L^{\infty}(\Omega_{\delta/2}(z'))}\leq\frac{C}{\delta}\left(\delta^{1-d/2}\|\nabla w_{j}\|_{L^{2}(\Omega_{\delta}(z'))}+\delta^{2}\|\mathcal{L}_{\lambda,\mu}\tilde{v}_{j}\|_{L^{\infty}(\Omega_{\delta}(z'))}\right).
\end{align}
From \eqref{QQ.103} and \eqref{step2}, it follows that for $z'\in B'_{R}$,
\begin{align*}
&\delta^{-\frac{d}{2}}\|\nabla w_{j}\|_{L^{2}(\Omega_{\delta}(z'))}+\delta\|\mathcal{L}_{\lambda,\mu}\tilde{v}_{j}\|_{L^{\infty}(\Omega_{\delta}(z'))}\notag\\
\leq& C(|\phi^{j}(z',h(z'))|\delta^{1-2/m}+|\nabla_{x'}\phi^{j}(z',h(z'))|)+C\delta\|\phi^{j}\|_{C^{2}}.
\end{align*}
This, in combination with \eqref{ADQ601}, yields that \eqref{AQ3.052} holds. The proof is complete.

%\noindent{\bf{\large Statement.}} The datasets generated during the current study are available from the corresponding author on reasonable request.

\noindent{\bf{\large Acknowledgements.}} C. Miao was supported by the National Key Research and Development Program of China (No. 2020YFA0712900) and NSFC Grant 11831004. Z. Zhao was partially supported by CPSF (2021M700358).

\bibliographystyle{plain}

\end{document}